\let\expandafter\oldproof\csname\string\proof\endcsname
\let\oldendproof\endproof
\renewenvironment{proof}[1][\proofname]{%
  \oldproof[\ttfamily \scshape \bf #1. ]%
}{\oldendproof}
\def\O{{\cal O}}
\def\C{{\cal C}}
\def\A{{\cal A}}
\def\s{{ \Upsilon}}
\def\B{\mathbb{B}}
\def\R{{\bf R}}
\def\oR{\overline{\R}}
\def\N{{\rm I\!N}}
\def\ox{{\bar{x}}}
\def\tx{\tilde{x}}
\def\ty{\tilde{y}}
\def\oy{\bar{y}}
\def\oz{\bar{z}}
\def\ov{\bar{v}}
\def\ou{\bar{u}}
\def\X{{\bf X}}
\def\Y{{\bf Y}}
\def\Z{{\bf Z}}
\def\H{{\bf H}}
\def\S{\bf {S}}
\def\P{\bf {P}}
\def \b{{\}_{k\in\N}}}
\def \c{{\}_{k\ge 0}}}
\def\L{{\mathscr{L}}}
\def\D{{\mathscr{D}}}
\def\ykk{y^{k+1}}
\def\what{\widehat}
\def\emp{\emptyset}
\def\tto{\rightrightarrows}
\def\prox{{\mbox{prox}\,}}
\def\tto{\rightrightarrows}
\def\sub{\partial}
\def\vt{\varthetal}
\def\Hat{\widehat}
\def\ra{\rangle}
\def\la{\langle}
\def\ve{\varepsilon}
\def\omu{\bar{\mu}}
\def\olm{{\bar\lambda}}
\def\gg{\gamma}
 \def\para{{\rm par}\,}
\def\dd{\delta}
\def\al{\alpha}
\def\Th{\Theta}
\def\vt{\vartheta}
\def\ph{\varphi}
\def\toset_#1{\xrightarrow{#1}}
\DeclareMathOperator*{\mini}{minimize\;}
\DeclareMathOperator*{\argmin}{arg\, min}
\def\d{{\rm d}}
\def\dist{{\rm dist}}
\def\ri{{\rm ri}\,}
\def\inte{{\rm int}\,}
\def\gph{{\rm gph}\,}
\def\epi{{\rm epi}\,}
\def\dom{{\rm dom}\,}
\def\bd{{\rm bd}\,}
\def\ker{{\rm ker}\,}
\def\cl{{\rm cl}\,}
\def\sm{\hbox{${1\over 2}$}}
\def\rsm{\hbox{${1\over 2r}$}}
\def\rN{{\what{N}}}
\def\x1k{{x^{k-1}}}
\def\xk{{x^k}}
\def\xkk{{x^{k+1}}}
\def\l1k{{\lambda^{k-1}}}
\def\lmk{{\lambda^k}}
\def\sig1k{{\sigma_{k-1}}}
\def\ro1k{{\rho_{k-1}}}
\def\rok{{\rho_k}}
\def\bek{{\beta^k}}
\def\alk{{\alpha^k}}
\def\xik{{\beta_k}}
\def\e1k{{\epsilon_{k-1}}}
\def\ek{{\epsilon_k}}
\def\z{{s}}
\def\verl{ \;\rule[-0.4mm]{0.2mm}{0.27cm}\;}
\def\menv{(e_{1^{}/{\rho}}g)} 
\def\menvk{(e_{1^{}/{\rho_k}}g)}
\def\prox{{\rm{prox}}_{\rho^{-1}{g}}} 
\def\proxk{{\rm{prox}}_{\rho_k^{-1}g}}
\begin{document}
\begin{center}
{\Large \bf Convergence of Augmented Lagrangian Methods for Composite Optimization Problems}\\[2ex]
NGUYEN T. V. HANG\footnote{School of Physical and Mathematical Sciences, Nanyang Technological University, Singapore 639798 and Institute of Mathematics, Vietnam Academy of Science and Technology, Hanoi, Vietnam (thivanhang.nguyen@ntu.edu.sg). Research of this    author is partially supported by Singapore National Academy of Science under the grant RIE2025 NRF International Partnership Funding Initiative.} and   EBRAHIM SARABI\footnote{Corresponding author, Department of Mathematics, Miami University, Oxford, OH 45065, USA (sarabim@miamioh.edu). Research of this    author is partially supported by the U.S. National Science Foundation  under the grant DMS 2108546.}
\end{center}
\vspace*{0.05in}

\small{\bf Abstract.} Local convergence analysis of the augmented Lagrangian method (ALM) is established for a large class of composite optimization problems with nonunique Lagrange multipliers
under a second-order sufficient condition. We present a new second-order variational property, called the semi-stability of second subderivatives, and demonstrate that it is widely satisfied for 
numerous classes of functions, important for applications in constrained and composite optimization problems. Using the latter condition and a certain second-order sufficient condition, 
we are able to establish Q-linear convergence of the primal-dual sequence for an inexact version of the ALM for composite programs. 
 \\[1ex]
{\bf Keywords.}  augmented Lagrangian, nonunique Lagrange multiplier, second-order sufficient condition, ${\cal C}^2$-decomposable functions, Q-linear convergence. \\[1ex]
{\bf Mathematics Subject Classification (2000)} 90C31, 65K99, 49J52, 49J53

\newtheorem{Theorem}{Theorem}[section]
\newtheorem{Proposition}[Theorem]{Proposition}
\newtheorem{Lemma}[Theorem]{Lemma}
\newtheorem{Corollary}[Theorem]{Corollary}

\theoremstyle{definition}
\newtheorem{Definition}[Theorem]{Definition}
\newtheorem{Algorithm}[Theorem]{Algorithm}
\newtheorem{Example}[Theorem]{Example}
\newtheorem{Remark}[Theorem]{Remark}

\numberwithin{equation}{section}

\renewcommand{\thefootnote}{\fnsymbol{footnote}}

\normalsize

\section{Introduction}
The augmented Lagrangian method, also known as the method of multipliers, was introduced  and studied independently by Hestenes in \cite{Hes69} and Powell in \cite{Pow69} for nonlinear programming problems (NLPs)  with equality constraints. 
It was later extended in \cite{Roc73a} for problems of convex programming with inequality constraints by Rockafellar, where its convergence analysis was carried out by applying the proximal point algorithm to the
dual of the augmented Lagrangian problem and ensuring the Q-linear convergence of the dual sequence  and R-linear convergence of the primal sequence in the ALM. Most of the publications about the ALM afterwards exploited  a different approach  to establish the local 
convergence of this method, since the original idea in   \cite{Roc73a} relied heavily on duality, which was not available for nonconvex settings. The pioneering work of Fern\'andez and  Solodov in \cite{fs12}
 was perhaps the culmination of those efforts over the last three decades to find conditions under which the Q-linear convergence of the primal-dual sequence, constructed by the ALM, can be achieved for NLPs with nonunique Lagrange 
multipliers. Instead of looking for 
duality in the augmented Lagrangian problem, the authors in \cite{fs12} showed that  the primal-dual iterates of the ALM are indeed a solution to a particular perturbation of the KKT system of the original problem.
That paved the path for them to utilize a general framework by Fischer in \cite{Fis02} for solving generalized equations and demonstrate that the classical second-order sufficient condition alone suffices for local convergence analysis of the ALM
and no constraint qualification is necessary for such a result.

Another possible approach to conduct local convergence analysis of the ALM for nonconvex optimization problems was   recently developed by Rockafellar in \cite{r22}, where he extended his original idea 
in \cite{Roc73a} of using duality and the proximal point algorithm  to obtain convergence of the dual sequence in the ALM. The principal idea therein was to assume   the  strong variational convexity (see page~166 in \cite{r23}) of the augmented 
Lagrangian function, which was characterized in \cite[Theorem~5]{r23} to be equivalent to the {\em strong} second-order sufficient conditions for NLPs. While the approach in \cite{r22} provides a general framework 
for the local convergence analysis of the ALM, it operates under a rather strong assumption, namely the strong second-order
sufficient condition for constrained optimization problems. This, in particular, can be a challenging assumption to disentangle when we deal with important classes of optimization problems such as eigenvalue optimization problems.  
 
Following the approach by Fern\'andez and  Solodov in \cite{fs12}, we aim to analyze the local convergence of the ALM   for a class of composite optimization problems that have a representation of the form 
\begin{equation}\label{comp}
\begin{cases}
\mbox{minimize}\;\;\psi(x)\quad \mbox{subject to}\;\; x\in \Th, \\
\mbox{with}\;\; \psi(x):=\ph(x)+ g(\Phi(x)),
\end{cases}
\end{equation} 
where  $\ph:\X\to \R$ and $\Phi:\X\to \Y$ are twice continuously differentiable functions, $g: \Y \to \oR:=[-\infty,\infty]$ is a proper lsc convex function, $\Th$ is a   polyhedral convex  set, and $\X$ and $\Y$ are  finite dimensional Hilbert spaces. 
The composite problem in \eqref{comp} encompasses many important classes of constrained and composite optimization problems such as
NLPs and nonlinear semidefinite programming problems (SDPs), convex piecewise linear-quadratic composite optimization problems, and eigenvalue related optimization problems.
We reveal two major second-order variational conditions for the convex function $g$ in \eqref{comp}, which together with  the second-order sufficient condition allow us to establish Q-linear convergence of the primal-dual sequence of the ALM for \eqref{comp}. In particular, we do not assume any constraint qualification, and hence  are able to deal with  composite problems with nonunique Lagrange multipliers. 

The main idea of the ALM  is to smooth out the nondifferentiable parts in the composite function $\psi$ in \eqref{comp} and to minimize then the obtained augmented function  for the next iterate of the method. 
To elaborate more, consider the   augmented Lagrangian function 
 $\L:\X\times \Y\times (0,\infty)\to \R$,   defined  by 
\begin{equation*}\label{aug}
\L(x,y,\rho):=\inf_{u\in \Y}\big\{\psi(x,u)+\frac{1}{2\rho} \|u\|^2-\la y,u\ra\big\},\quad (x,y,\rho)\in \X\times \Y\times (0,\infty),
\end{equation*}
where $\psi(x,u)$ is a {\em partial} perturbation of $\psi$ in \eqref{comp}, given by  $\psi(x,u):=\ph(x)+ g(\Phi(x)+u)$  for any $(x,u)\in \X\times \Y$.
Note that the full perturbation of $\psi$ requires to replace $\psi$ with $\psi+\dd_\Th$ and consider then  a second perturbation variable for $\Th$. This complicates our convergence analysis and  seems unnecessary, however. 
A direct calculation then shows that for any $(x,y,\rho)\in  \X\times \Y\times (0,\infty)$, the augment Lagrangian $\L$ can be equivalently expressed as
\begin{equation}\label{aug}
\L(x,y,\rho)=\ph(x)+e_{ {1}/{\rho}} g \big(\Phi(x)+\rho^{-1} y\big)-\sm\rho^{-1}\| y \|^2,
\end{equation}
where $e_{1/\rho} g$ stands for  the Moreau envelope of   $g$, given  by 
\begin{equation*}
\menv(y):=\inf_{z\in \Y}\big\{g(z)+\tfrac{1}{2}\rho\|y-z\|^2\big\}, \quad y\in \Y.
\end{equation*}
Given the current triple $(\xk, y^k,\rok)\in \X\times \Y\times (0,\infty)$, the {\em exact} version of the ALM generates the next primal iterate $\xkk$ and the dual iterate $y^{k+1}$, respectively,  by 
\begin{equation}\label{augxy}
\xkk \in \argmin_{x\in \Th} \L(x,y^k,\rok)\quad \mbox{and}\quad y^{k+1}=y^k+\rok\nabla_y \L(\xkk,y^k,\rok). 
\end{equation}
Since selecting $\xkk$ as an exact minimizer of  the augmented Lagrangian function does not seem practical, we are going to consider a more realistic scenario 
and demand that   the next primal iterate $\xkk$ be an {\em approximate} stationary solution to the constrained augmented problem 
\begin{equation}\label{subp}
\mbox{minimize}\;\; \L(x,y^k,\rok)\quad \mbox{subject to}\quad x\in \Th,
\end{equation}
 namely it satisfies the condition 
\begin{equation}\label{xkk}
\dist\big(-\nabla_x\L(\xkk, y^k, \rok),N_\Th(\xkk)\big)\leq \ek,
\end{equation} 
where $\ek\geq 0$ is called the {\em tolerance} parameter.  Note that an exact minimizer $\xkk$ as the one in \eqref{augxy} satisfies \eqref{xkk} with $\ek=0$. 

As pointed earlier, the  local convergence of the ALM for \eqref{comp} was conducted  in \cite{fs12}  for NLPs when the second-order sufficient condition is satisfied.
Using the tools of second-order variational analysis, we showed in \cite{HaS21} that a similar  convergence analysis  can be achieved for \eqref{comp} when $g$ therein belongs to the class
of convex piecewise linear-quadratic functions. For SDP, the Q-linear convergence of the primal-dual sequence for the exact ALM was justified in \cite{ssz} when the nondegeneracy 
condition and the strong second-order sufficient condition are satisfied. In the same setting, 
it was recently shown in \cite[Theorem~2]{ding} that if a relative interior condition for the underlying Lagrange multiplier and the SOSC are satisfied, the inexact version 
of the ALM is Q-linearly convergent. Using a stronger version of the SOSC, the authors in \cite{ding2} obtained the R-linear convergence for   the primal sequence and Q-linear convergence for 
the dual sequence of the ALM without assuming the strict complementarity condition. We should also mention that a modified version of the ALM, called the {\em safeguarded}  ALM 
 in which the $y^k$  in  \eqref{subp} is replaced with a certain vector $w^k$ chosen from a bounded set, 
 was studied in \cite{abm1,abm2, ks}, where it was shown that 
 this modification has  an interesting  global convergence. The main motivation for this modification comes from the fact that  the dual sequence, constructed by the  ALM, may be unbounded in general.  
 
In our recent work \cite{HaS21}, we extended the local convergence analysis of the ALM in \cite{fs12} for a class of composite optimization 
problems that the modeling function therein,   the function $g$ in \eqref{comp}, was assumed to be convex piecewise linear-quadratic (CPLQ). In the present paper, our primary goal is twofold. First, we are going  to demonstrate that 
a similar result can be obtained for composite optimization problem in \eqref{comp} with the modeling function $g$ belonging  to a large class of functions, called ${\cal C}^2$-decomposable; see \eqref{decom}. Second, we provide two major conditions
under which the  local convergence of the ALM for \eqref{comp} can be achieved. In doing so, we introduce two new concepts, the semi-strict graphical derivative and semi-strict second subderivative, and calculate them 
for some important classes of functions. We also introduce a new concept, called semi-stability of second subderivatives, and demonstrate that it holds for various important classes of functions in optimization. This property is then utilized 
to ensure a uniform version of the quadratic growth condition for the augmented Lagrangian function in \eqref{aug}, which is a major tool in our approach to analyze the convergence of the ALM. 

The outline of the paper is as follows. We begin in Section~\ref{pre} by recalling an iterative framework for our  inexact ALM and reviewing our notation. 
The goal of Section~\ref{calm} is to provide a general property under which we can ensure the metric subregualrity of the Karush-Kuhn-Tucker (KKT) system associated with \eqref{comp}. 
In Section~\ref{sssub}, we introduce the concept of semi-stability of second subderivatives and use it to justify  the quadratic growth condition for the augmented Lagrangian function.
Section~\ref{local} is devoted to establishing the Q-linear convergence of the primal-dual sequence generated by the proposed inexact ALM. 
Finally, in Section~\ref{appe}, we present
two independent results that are important for our developments in this paper.

\section{Prelimenaries}\label{pre}
\subsection{Notation}
In what follows,  suppose that $\X$ and $\Y$ are finite dimensional Hilbert spaces.
We denote by $\B$ the closed unit ball in the space in question and by $\B_r(x):=x+r\B$ the closed ball centered at $x$ with radius $r>0$. 
 In the  product space $\X\times \Y$, we use the norm $\|(w,u)\|=\sqrt{\|w\|^2+\|u\|^2}$ for any $(w,u)\in \X\times \Y$.
 Given a nonempty set $C\subset\X$, the symbols $\inte C$,  $\ri C$, $C^*$,  and $\para C$ signify its interior, relative interior, polar cone, and the   linear subspace parallel to the affine hull of $C$, respectively. 
 For any set $C$ in $\X$, its indicator function is defined by $\dd_C(x)=0$ for $x\in C$ and $\dd_C(x)=\infty$ otherwise. We denote
 by $P_C$ the projection mapping onto $C$ and  by $\dist(x,C)$  the distance between $x\in \X$ and a set $C$.
 For a vector $w\in \X$, the subspace $\{tw |\, t\in \R\}$ is denoted by $[w]$. The set of nonnegative number is denoted by $\R_+$.

Let $\{C^t\}_{t>0}$ be a parameterized family of sets in $\X$. Its outer limit is defined  by 
\begin{equation*}
\limsup_{t\searrow 0} C^t= \big\{x\in \X|\; \exists \; t_k \searrow 0 \;\exists\;   \; x^{t_k}\to x \;\;\mbox{with}\;\; x^{t_k}\in C^{t_k}\big\}.
\end{equation*}
 Given a nonempty set $\Omega\subset\X$ with $\ox\in \Omega$, the {  tangent cone}  to $\Omega$ at $\ox$, denoted $T_\Omega(\ox)$,  is defined  by
$
T_\Omega(\ox) = \limsup_{t\searrow 0} \frac{\Omega - \ox}{t}.
$
The  regular/Fr\'{e}chet normal cone $\rN_\Omega(\ox)$ to $\Omega$ at $\ox$ is defined by
 $\rN_\Omega(\ox) = T_\Omega(\ox)^*$. For $x\notin \Omega$, we set $\rN_\Omega(x) = \emptyset$. The limiting/Mordukhovich normal cone $N_\Omega(\ox)$ to $\Omega$ at $\ox$ is
 the set of all vectors $\ov\in \X$ for which there exist sequences  $\{x^k\b$ and  $\{v^k\b$ with $v^k\in \rN_\Omega( \xk)$ such that 
$(x^k,v^k)\to (\ox,\ov)$. When $\Omega$ is convex, both normal cones boil down to that of convex analysis.  
Given a function $f:\X \to \oR$ and a point $\ox\in\X$ with $f(\ox)$ finite, the subderivative function $\d f(\ox)\colon\X\to\oR$ is defined by
\begin{equation*}\label{fsud}
\d f(\ox)(w)=\liminf_{\substack{
   t\searrow 0 \\
  w'\to w
  }} \frac{f(\ox+tw')-f(\ox)}{t}.
\end{equation*}
A vector $v\in \X$ is called a regular subgradient of $f$ at $\ox$ if $(v,-1)\in \Hat N_{\epi f}(\ox,f(\ox))$ with $\epi f=\{(x,\al)\in \X\times \R\, |\, f(x)\le \al\}$ being the epigraph of $f$. The set of all regular subgradients of $f$ at $\ox$
is    denoted by $\Hat \sub f(\ox)$. Similarly, we can define $\sub f(\ox)$ using the limiting normal cone $N_{\epi f}(\ox,f(\ox))$. When $f$ is a convex function, both sets reduce to the well-known subdifferential of convex functions.  
The critical cone of $f$ at $\ox$ for $\bar v$ with $\bar v\in   \sub f(\ox)$ is defined by 
\begin{equation*}\label{cricone}
{K_f}(\ox,\bar v)=\big\{w\in \X\,\big|\,\la\bar v,w\ra=\d f(\ox)(w)\big\}.
\end{equation*}
When $f=\dd_\Omega$, where $\Omega$ is a nonempty subset of $\X$, the critical cone of $\dd_\Omega$ at $\ox$ for $\ov$ is denoted by $K_\Omega(\ox,\ov)$. In this case, the above definition of the critical cone of a function 
boils down to  the well-known concept of the critical cone of a set (see \cite[page~109]{dr}), namely $K_\Omega(\ox,\ov)=T_\Omega(\ox)\cap [\ov]^\perp$. 
The {second subderivative} of $f$ at $\ox$ for $\ov \in   \sub f(\ox)$, denoted $\d^2 f(\bar x , \ov)$, is an extended-real-valued function defined  by 
\begin{equation*}\label{ssd}
\d^2 f(\bar x , \ov)(w)= \liminf_{\substack{
   t\searrow 0 \\
  w'\to w
  }} \Delta_t^2 f(\ox , \ov)(w'),\;\; w\in \X,
\end{equation*}
where $\Delta_t^2 f(\ox , \ov)$  is the parametric  family of 
second-order difference quotients of $f$ at $\ox$ for $\ov$ and is defined for any $w\in \X$ and $t>0$ by 
\begin{equation}\label{sodq}
\Delta_t^2 f(\bar x , \ov)(w)=\frac{f(\ox+tw)-f(\ox)-t\langle \ov,\,w\rangle}{\frac {1}{2}t^2}.
\end{equation}
The function $f$ is called twice epi-differentiable at $\ox$ for $\ov$ if for any $w\in \X$ and any $ t_k\searrow 0$, there exists $w^k\to w$ such that $\Delta_{t_k}^2 f(\bar x , \ov)(w^k)\to \d^2 f(\bar x , \ov)(w)$;
see \cite[Definition~13.6(b)]{rw}. 
  For  a set-valued mapping $F: \X \rightrightarrows \Y$, its graph and domain are defined, respectively,  by  
$
\gph F=\big\{(x,y)\in \X\times \Y\, \big|\, y\in F(x)\big\} 
$
and $ \dom F= \big\{ x\in \X\, \big|\, F(x)\neq \emptyset\big\}.$
We say that $F$ is calm at $x$ for $y\in F(x)$ if there are neighborhoods $U$ of $x$ and $V$ of $y$ and a positive constant $\kappa$ for which we have 
$$
F(x')\cap V\subset F(x)+\kappa\,\|x-x'\|\B\quad \mbox{for all}\;\; x'\in U.
$$
The mapping $F$ is called metrically subregular at $x$ for $y\in F(x)$ if  there are a neighborhood $U$ of $x$  and a positive constant $\ell$
such that $\dist(x',F^{-1}(y))\le \ell\, \dist (y,F(x'))$ for any $x'\in U$. 

\subsection{An Iterative Framework for ALM}

 As shown later in this section, the primal-dual sequence $\{(\xk,y^k)\b$, generated by the ALM considered in this paper,  are solutions to   a particular perturbation of 
the KKT system associated with the composite optimization problem in \eqref{comp}, which is   
  given by 
\begin{equation}\label{vs} 
  0\in\nabla_x L(x,y) + N_\Th(x),\;\;y\in \sub g(\Phi(x)),
\end{equation}
where $L(x,y)= \ph(x)+\la y, \Phi(x)\ra$ with $(x,y)\in \X\times \Y$ is the Lagrangian of \eqref{comp} and  
where the functions $g$, $\Phi$, and the set $\Th$ are taken from \eqref{comp}. It is easy to see that 
\eqref{vs} can be written as the generalized equation 
\begin{equation}\label{geco} 
\begin{bmatrix}
0\\
0
\end{bmatrix}\in  \begin{bmatrix}
\nabla_xL(x, y)\\
-\Phi(x)
\end{bmatrix}
+
 \begin{bmatrix}
N_\Th(x)\\ (\partial g)^{-1}(y)
\end{bmatrix}.
\end{equation}
To elaborate more on the kind of perturbation we should consider for this generalized equation,  we briefly recall an abstract iterative framework, suggested by Fischer in \cite{Fis02}, for solving generalized equations with nonisolated solutions.
Given the mappings $\Psi: \H\to \H'$ and $G:\H \tto\H'$, where $\H$ and $\H'$ are finite dimensional Hilbert spaces,  consider the generalized equation formulated by
\begin{equation}\label{ge}
0\in \Psi(u) + G(u).
\end{equation}
Define the  solution mapping $\s:\H'\tto \H$ to the canonical perturbation of \eqref{ge} by    
\begin{equation}\label{solm}
\s(v)=\big\{u\in \H|\, v \in \Psi(u) +G(u)\big\}, \;\;v\in \H'.
\end{equation}
Given  a set of parameters $\P$, iterative algorithms for solving \eqref{ge} often generate a sequence $\{u^k\b$ by iteratively solving subproblems of the form
\begin{equation}\label{age}
0\in \A(u, u^k, p_k)+G(u),
\end{equation}
in which the single-valued part in \eqref{ge} is replaced with a set-valued mapping $\A:\H\times \H\times \P\tto \H'$,  which gives an approximation of $\Psi$ around the current iterate, while the set-valued part is kept unchanged. 
In order to ensure the convergence and to establish the rate of convergence of $\{u^k\b$, we have to  choose a solution to \eqref{age} satisfying certain properties. Namely, 
given the current iterate $u^k$ and a parameter $p_k$, we choose the next iterate $u^{k+1}$ sufficiently close to $u^k$ satisfying
\begin{equation}\label{ukk}
u^{k+1} \in \big\{ u\in \H\, \big|\, 0\in \A(u, u^k, p_k)+ G(u)\; \textrm{ and }\; \|u-u^k\|\leq c\, \dist(u^k,    \s(0))\big\},
\end{equation}
where $ c>0$ is arbitrary yet fixed, and $\s(0)$, taken from \eqref{solm} with $v=0$, is the set of solutions to \eqref{ge}.   Below, we recall a result, established in  \cite[Theorem~4.1]{IzK13} (see also  \cite[Theorem~7.13]{is14}), in which 
the local convergence  analysis of the sequence $\{u^k\b$ was obtained under rather mild assumptions. We should point out that assumption (a) in the theorem below was expressed in  \cite[Theorem~4.1]{IzK13} in a slightly stronger sense, namely 
 the solution mapping $\s$ was assumed to be upper Lipschitzian instead of being calm. It is not hard to see from its proof, however, that calmness suffices for this result. The nonparametric version of this result was later appeared in  \cite[Theorem~7.13]{is14} with upper Lipschitzian replaced by calmness. 

\begin{Theorem}\label{thm:fischer} Let $\ou \in \s(0)$, where $\s$ is the solution mapping from \eqref{solm}. Assume that $\s(0)$ is locally closed around $\ou$ and that the following properties   hold for some positive constant $c$:
\begin{enumerate}[noitemsep,topsep=2pt]
\item {\rm(}calmness of solution mapping{\rm)} the solution mapping $\s$ is calm at $\ou$ for $0\in \H'$ with constant   $\ell_1>0$;

\item {\rm(}solvability of subproblems{\rm)} there exists a positive constant $\varepsilon_1$ such that for any $\tilde u \in \B_{\varepsilon_1}(\ou)$ and any $p\in \P$ the localized solution set
\begin{equation*}
\big\{ u\in \X\, \big|\, 0\in \A(u, \tilde u, p)+ G(u)\; \textrm{ and }\; \|u-\tilde u\|\leq c\, \dist(\tilde u, \s(0))\big\}
\end{equation*}
is nonempty;

\item {\rm(}precision of approximation{\rm)} there exist a positive constant $\varepsilon_2$ and a function $\omega: \H\times\H\times \P:\to \R_+$ such that 
\begin{equation*}
 \sup\big\{\omega(u, \tilde u, p)\, \big|\, \tilde u \in \B_{\varepsilon_2}(\ou),\, \| u - \tilde u\|\leq  c\, \dist(\tilde u, \s(0)), \, p\in {\P}\big\} < 1/\ell_1,
\end{equation*}
where constant $\ell_1>0$ is taken from {\rm(a)}, and the estimate
\begin{equation*}
\sup\big\{\|w\|\, \big|\, w \in \Psi(u) - \A(u, \tilde u, p)\big\} \leq \omega(u, \tilde u, p)\, \dist(\tilde u,\s(0))
\end{equation*}
holds for all $\tilde u\in \B_{\varepsilon_2}(\ou)$, all $u\in \H$ with $\|u-\tilde u\|\leq  c\, \dist(\tilde u, \s(0))$, and all $p\in \P$.
\end{enumerate}
Then there exists $\varepsilon_0>0$ such that for any starting point $u^0$ chosen from $\B_{\varepsilon_0}(\ou)$ and any sequence $\{p_k\}_{k\in \N}\subset \P$, 
the iterative scheme in \eqref{ukk} 
generates a sequence $\{u^k\}_{k\in \N}$ converging to some $\hat u \in \s(0)$.
The rates of convergence of $\{u^{k}\b$ to $\hat u$ and of $\{\dist(u^k, \s(0))\b$ to $0$ are Q-linear, and they are Q-superlinear provided that $\omega(u^{k+1}, u^k, p_k)\to 0$ as $k\to\infty$.
Moreover, for any $\varepsilon>0$, we have  $\|\hat u -\ou\|<\ve$ if   $u^0$ is chosen sufficiently close to $\ou$.
\end{Theorem}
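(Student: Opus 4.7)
The plan is to combine the three hypotheses into an inductive one-step estimate of the form $\dist(u^{k+1},\s(0)) \le q\,\dist(u^k,\s(0))$ with a fixed $q \in (0,1)$, and then deduce convergence of $\{u^k\}$ through summability of the increments $\|u^{k+1}-u^k\|$ and identify the limit via local closedness of $\s(0)$. Assumptions (a), (b), (c) each play a distinct role: (b) makes the iteration well-defined, (c) measures the residual of the approximation, and (a) translates that residual into progress toward the solution set.

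I would begin by fixing constants. By (c) there exists $q^* < 1/\ell_1$ with $\omega(u,\tilde u,p)\le q^*$ throughout the region described there; set $q:=\ell_1 q^*\in (0,1)$. I would then shrink $\ve_0$ so that every iterate produced by \eqref{ukk} remains inside the intersection of the neighborhoods in which (a), (b), and (c) simultaneously apply. The bound $\|u^{k+1}-u^k\|\le c\,\dist(u^k,\s(0))$ combined with the (still to be proved) contraction on distances implies $\|u^k-\ou\|$ is controlled by a geometric series with leading term $\dist(u^0,\s(0))\le \|u^0-\ou\|\le \ve_0$; thus $\ve_0$ can be chosen small relative to $\ve_1$, $\ve_2$ and the calmness radius so that the induction closes.

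The core estimate is then produced as follows. Given $u^k$ in this working neighborhood, (b) supplies $u^{k+1}$ together with an element $a\in\A(u^{k+1},u^k,p_k)$ such that $-a\in G(u^{k+1})$. Set $w:=\Psi(u^{k+1})-a$, which lies in $\Psi(u^{k+1})-\A(u^{k+1},u^k,p_k)$; by construction $w\in \Psi(u^{k+1})+G(u^{k+1})$, i.e.\ $u^{k+1}\in\s(w)$. The calmness assumption (a) then gives $\dist(u^{k+1},\s(0))\le \ell_1\|w\|$, and by (c), $\|w\|\le \omega(u^{k+1},u^k,p_k)\,\dist(u^k,\s(0))\le q^*\dist(u^k,\s(0))$. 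Combining these yields the key contraction $\dist(u^{k+1},\s(0))\le q\,\dist(u^k,\s(0))$.

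From this recursion, the bounds $\|u^{k+1}-u^k\|\le c\,\dist(u^k,\s(0))\le c q^k\,\dist(u^0,\s(0))$ make $\{u^k\}$ Cauchy, hence convergent to some $\hat u$; local closedness of $\s(0)$ around $\ou$ and $\dist(u^k,\s(0))\to 0$ place $\hat u\in\s(0)$. The Q-linear rate for distances is exactly the contraction; the corresponding rate for $\|u^k-\hat u\|$ follows from the telescoping estimate $\|u^k-\hat u\|\le \sum_{j\ge k}\|u^{j+1}-u^j\|\le \tfrac{c}{1-q}\dist(u^k,\s(0))$ together with the same contraction. The Q-superlinear refinement is immediate: if $\omega(u^{k+1},u^k,p_k)\to 0$, then the one-step ratio $\ell_1\omega(u^{k+1},u^k,p_k)\to 0$. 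Finally, $\|\hat u-\ou\|\le \|u^0-\ou\|+\sum_{k\ge 0}\|u^{k+1}-u^k\|\le \ve_0+\tfrac{c}{1-q}\,\ve_0$, so taking $\ve_0$ small yields the last assertion.

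The main obstacle is the bookkeeping in the initial step: the contraction argument is only valid when $u^k$ lies in the common domain of applicability of (a), (b), (c), but this is what the induction is meant to produce. Closing this circular dependence requires choosing $\ve_0$ a priori so that the worst-case drift of $\{u^k\}$, controlled by the geometric series above, cannot escape the common neighborhood; all other steps are straightforward once this geometric-series bookkeeping is carried out.
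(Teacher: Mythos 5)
The paper itself does not prove this theorem: it is imported from \cite[Theorem~4.1]{IzK13} and \cite[Theorem~7.13]{is14}, so there is no internal argument to compare against. Your reconstruction is the standard proof of this result and is sound in its main thrust. In particular, the key step --- producing $w\in\Psi(u^{k+1})-\A(u^{k+1},u^k,p_k)$ with $u^{k+1}\in\s(w)$ and chaining assumption (c) into assumption (a) to get $\dist(u^{k+1},\s(0))\le\ell_1 q^*\dist(u^k,\s(0))=q\,\dist(u^k,\s(0))$ --- is exactly right, and your geometric-series bookkeeping $\|u^k-\ou\|\le(1+c/(1-q))\,\ve_0$ is the correct way to close the induction that keeps all iterates (and the perturbations $w$) inside the common domain of validity of (a)--(c). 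Existence of the limit, its membership in $\s(0)$ via local closedness, Q-linearity of $\{\dist(u^k,\s(0))\}$, the Q-superlinear refinement, and the final localization claim are all correctly obtained.

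The one genuine gap is the Q-linear rate of $\{u^k\}$ itself. Your telescoping bound gives $\|u^k-\hat u\|\le\tfrac{c}{1-q}\dist(u^k,\s(0))$, and together with $\dist(u^k,\s(0))\le\|u^k-\hat u\|$ and the contraction this yields only $\|u^{k+1}-\hat u\|\le\tfrac{cq}{1-q}\|u^k-\hat u\|$; the coefficient $cq/(1-q)$ need not be below $1$, since nothing in (a)--(c) forces $q<1/(1+c)$ (and $c$ must typically be large for (b) to hold). What this argument actually delivers is R-linear convergence of the iterates, i.e.\ $\|u^k-\hat u\|=O(q^k)$. The obstruction is not an artifact of the estimate: take $\H=\H'=\R^2$, $\Psi(u)=(u_1,0)$, $G\equiv\{(0,0)\}$, $\A(u,\tilde u,p)=\{(u_1-\tilde u_1/2,\,0)\}$, and $c=10$; then $\s(0)=\{0\}\times\R$, all hypotheses hold with $\ell_1=1$ and $\omega\equiv 1/2$, yet the admissible iterates $u^k=(2^{-k},0)$ for even $k$ and $u^k=(2^{-k},5\cdot 2^{-k})$ for odd $k$ converge to $\hat u=(0,0)$ with $\|u^{k+1}-\hat u\|/\|u^k-\hat u\|=\sqrt{26}/2>1$ along even $k$, so the convergence is R-linear but not Q-linear. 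Hence this particular conclusion cannot be rescued by a different estimate; the defensible statement (and the one your proof actually supports) is Q-linearity of the distances and R-linearity of the iterates, both upgrading to superlinear when $\omega(u^{k+1},u^k,p_k)\to0$. Note that the Q-superlinear case survives precisely because a fixed constant times a quantity tending to zero still tends to zero, whereas a fixed constant times $q$ need not stay below one.
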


Our main objective  in this paper is to show that if a second-order sufficient condition for optimality (see \eqref{sosc1}) holds for the composite optimization problem in \eqref{comp}  
and a certain calmness of the multiplier mapping associated with this problem (see \eqref{calmm}) is satisfied, then all   assumptions (a)-(c) in Theorem~\ref{thm:fischer}
can be verified for many important classes of constrained and composite optimization problems. 

Our final goal in this section is to show that  the primal-dual sequence $\{(\xk,y^k)\b$, generated by the {\em inexact} ALM in this paper, can be fit into the iterative pattern described above.
To this end, recall first that  the proximal mapping of  a convex function $g:\Y\to \oR$ is defined by 
\begin{equation*}\label{pr}
{\rm{prox}}_{rg}(y)=\underset{z\in \Y}{\argmin} \big\{g(z)+\rsm\|y-z\|^2\big\},\quad y\in\Y,
\end{equation*}
where $r$ is a positive constant. 
In what follows, when $r=1$, the proximal mapping of $g$ will be denoted by ${\rm prox}_g$. It  follows from   \cite[Proposition~12.19]{rw} and \cite[Theorem~2.26]{rw}, respectively, 
 that for any $y\in \Y$ we always have 
\begin{equation}\label{prm}
{\rm{prox}}_{rg}(y)=\big(I+r\sub g\big)^{-1}(y) \quad \mbox{and}\quad \nabla e_rg(y)=\big(rI+(\sub g)^{-1}\big)^{-1}(y),
\end{equation}
where $I$ stands for the identity mapping from $\Y$ onto $\Y$. Using these relationships, one can equivalently reformulate the dual update $\ykk$ in \eqref{augxy}
as
\begin{equation}\label{kkt2}
\ykk \in \partial g\big(\Phi(\xkk) - \rho_k^{-1}(\ykk-y^k)\big)\quad \mbox{or}\quad \ykk=\nabla\menvk\big(\Phi(\xkk)+ \rho_k^{-1}{y^k}\big).
\end{equation}
To express the KKT system in \eqref{vs}  in the form of the generalized equation in \eqref{ge}, we define the mappings   $\Psi: \X\times \Y \to \X\times\Y$ and  $G: \X\times \Y \tto \X\times\Y$  by
\begin{equation}\label{ALMge}
\Psi(x, y) = \begin{bmatrix}
\nabla_xL(x, y)\\
-\Phi(x)
\end{bmatrix}
\quad\textrm{ and }\quad
G(x, y) =\begin{bmatrix}
N_\Th(x)\\ (\partial g)^{-1}(y)
\end{bmatrix},
\end{equation}
which clearly demonstrates that \eqref{vs} can be covered by \eqref{ge}. 
Moreover, the inexact primal update $\xkk$, satisfying  \eqref{xkk},      can be equivalently described via \eqref{kkt2} by  
\begin{align*}
0 &\in \nabla_x\L(\xkk, y^k, \rok) +\ek\B+N_\Th(\xkk)\nonumber\\
&= \nabla \varphi(\xkk) +\nabla\Phi(\xkk)^*\nabla\menvk\big(\Phi(\xkk)+ \rho_k^{-1}{y^k}\big)+\ek\B+N_\Th(\xkk)\nonumber\\
&=\nabla_x L(\xkk, \ykk) +\ek\B+N_\Th(\xkk).
\end{align*}
In order to show that the primal-dual iterate $(\xkk,\ykk)$ satisfies \eqref{age}, we  define the mapping $\A: \X\times \Y\times \X\times \Y\times \R_+\times (0, \infty) \tto \X\times \Y$ by
\begin{equation}\label{A}
\A(x, y, \tx, \ty, \epsilon, \rho) := \begin{bmatrix}
\nabla_xL(x, y) + \epsilon \B\\
-\Phi(x) +\rho^{-1}(y - \ty)
\end{bmatrix}
\end{equation}
for a given quadruple  $(\tx, \tilde y,\epsilon, \rho)\in \X\times \Y\times \R_+\times (0,\infty)$.  Combining these with the first inclusion in \eqref{kkt2}
tells us that the primal-dual iterate $(\xkk,\ykk)$ is a solution to the subproblem 
$$
(0,0)\in \A(x, y, \xk, y^k, \epsilon_k, \rok)+ G(x, y). 
$$
We close this section by recording some properties of the augmented Lagrangian function in \eqref{aug}, which directly result from \eqref{prm}.
Note that for any $(x,\rho)\in \X\times (0,\infty)$, the mapping $y\mapsto \L(x,y,\rho)$ is ${\cal C}^1$ and its gradient can be calculated via  \eqref{prm} as
\begin{equation}\label{ueq19}
\nabla_y\L(x, y, \rho)=\rho^{-1}\big(\nabla\menv\big(\Phi(x)+\rho^{-1}{y}\big) - y\big)
= \Phi(x)-\prox \big(\Phi(x)+ \rho^{-1}y\big).
\end{equation} 

\begin{Proposition}\label{fopag}
Let $(\ox,\oy)$ be a solution to the KKT system in \eqref{vs}. Then for any $\rho>0$,  the following properties are satisfied.
\begin{enumerate}[noitemsep,topsep=0pt]
\item $\L(\ox,\oy,\rho)=\psi(\ox)=\ph(\ox)+g\big(\Phi(\ox)\big)$.
\item $\nabla_x\L(\ox,\oy,\rho) = \nabla_x L(\ox,\oy)$ and $\nabla_y\L(\ox,\oy,\rho)=0$.
\end{enumerate}
\end{Proposition}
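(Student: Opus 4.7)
The plan is to exploit the key identity from \eqref{prm}, namely $\prox_{rg} = (I + r\sub g)^{-1}$, to evaluate the proximal mapping at the KKT point and then plug directly into the augmented Lagrangian formula \eqref{aug} and its gradient formula \eqref{ueq19}.

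\textbf{Step 1: Locate the proximal value at the KKT point.} Since $(\ox,\oy)$ solves \eqref{vs}, we have $\oy \in \sub g(\Phi(\ox))$, which means $\Phi(\ox) + \rho^{-1}\oy \in (I + \rho^{-1}\sub g)(\Phi(\ox))$ for every $\rho>0$. Inverting via the first identity in \eqref{prm} with $r = 1/\rho$ yields
\begin{equation*}
{\rm prox}_{\rho^{-1} g}\bigl(\Phi(\ox) + \rho^{-1}\oy\bigr) = \Phi(\ox).
\end{equation*}

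\textbf{Step 2: Evaluate the Moreau envelope.} Substituting this proximal value into the definition of $\menv$ gives the closed-form expression
\begin{equation*}
\menv\bigl(\Phi(\ox) + \rho^{-1}\oy\bigr) = g\bigl(\Phi(\ox)\bigr) + \tfrac{\rho}{2}\bigl\|\rho^{-1}\oy\bigr\|^2 = g\bigl(\Phi(\ox)\bigr) + \tfrac{1}{2\rho}\|\oy\|^2.
\end{equation*}
Plugging this into \eqref{aug}, the $\tfrac{1}{2\rho}\|\oy\|^2$ term cancels against $-\tfrac{1}{2\rho}\|\oy\|^2$, leaving $\L(\ox,\oy,\rho) = \ph(\ox) + g(\Phi(\ox)) = \psi(\ox)$, which proves (a).

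\textbf{Step 3: Compute the gradients.} For $\nabla_y\L$, I apply the second formula in \eqref{ueq19} to obtain $\nabla_y\L(\ox,\oy,\rho) = \Phi(\ox) - {\rm prox}_{\rho^{-1}g}(\Phi(\ox)+\rho^{-1}\oy) = 0$ by Step~1. For $\nabla_x\L$, differentiating \eqref{aug} in $x$ via the chain rule gives $\nabla_x\L(\ox,\oy,\rho) = \nabla\ph(\ox) + \nabla\Phi(\ox)^*\nabla\menv(\Phi(\ox) + \rho^{-1}\oy)$. Using the second identity in \eqref{prm} (or equivalently rearranging the first equality in \eqref{ueq19} together with Step~1), one gets $\nabla\menv(\Phi(\ox) + \rho^{-1}\oy) = \oy$, so $\nabla_x\L(\ox,\oy,\rho) = \nabla\ph(\ox) + \nabla\Phi(\ox)^*\oy = \nabla_x L(\ox,\oy)$, establishing (b).

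There is no real obstacle here: the statement is essentially a consistency check that the primal-dual stationary conditions for $\L$ recover those of $L$ at a KKT pair. The only subtlety is being careful about the scaling $r = 1/\rho$ when applying the two identities in \eqref{prm}, and matching the factors $\tfrac{1}{2\rho}$ in Step~2 so the cancellation in part (a) comes out cleanly.
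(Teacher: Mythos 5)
Your proof is correct and follows exactly the route the paper intends: the paper omits a proof, stating only that these properties ``directly result from \eqref{prm}'', and your computation of ${\rm prox}_{\rho^{-1}g}(\Phi(\ox)+\rho^{-1}\oy)=\Phi(\ox)$ followed by substitution into \eqref{aug} and \eqref{ueq19} is precisely that verification. The scaling $r=1/\rho$ and the cancellation of the $\tfrac{1}{2\rho}\|\oy\|^2$ terms are handled correctly.
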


\section{Metric Subregularity of KKT Systems}\label{calm}

As shown in Theorem~\ref{thm:fischer}, the calmness of  the solution mapping in \eqref{solm} 
is required for local convergence analysis of the ALM for the composite optimization problem \eqref{comp}. In this section, we aim to provide a sufficient condition for 
an error bound estimate for  the generalized equation in \eqref{geco}, which will be shown 
in Proposition~\ref{casol} that is  equivalent to the aforementioned calmness of \eqref{solm}. 
To achieve our goal, recall that the graphical derivative of a set-valued mapping $F:\X\tto\Y$ at $\ox$ for $\oy\in F(\ox)$, denoted by $DF(\ox, \oy)$, is a set-valued mapping from $\X$ into $\Y$, defined by
 $$
 \gph DF(\ox, \oy)= T_{\gph F}(\ox,\oy).
 $$
Employing the definition of the tangent cone, one can easily obtain an equivalent sequential description of $\gph DF(\ox, \oy)$. 
If, in addition, for any $\eta\in DF(\ox, \oy)(w)$ and any choice of $t_k\searrow0$,  there exist sequences $w^k \to w$ and $\eta^k \to \eta$ with $\oy+t_k\eta^k \in F(\ox +t_kw^k)$, 
 then $F$ is said to be proto-differentiable at $\ox$ for $\oy$; see  \cite[page~331]{rw} for more details. One of the main source of proto-differentiability in variational 
 analysis is  subgradient mappings of various classes of functions including CPLQ functions; see \cite{mms,ms20,rw} for more examples.

\begin{Definition}[semi-strict graphical derivatives]\label{ssgd}
The semi-strict graphical derivative  of $F:\X\tto \Y$  at $\ox$ for $\oy\in F(\ox)$, denoted by $\Hat DF(\ox, \oy)$, is a set-valued mapping from $\X$ to $\Y$ defined by
$$
\eta\in \Hat DF(\ox, \oy)(w)
\iff \begin{cases}
\exists\; t_k\searrow 0, \, (x^k,y^k)\to (\ox,\oy),\; \widehat y^k\to \oy\;\;\mbox{with}\; y^k\in F(x^k), \; \widehat y^k\in F(\ox)\\
 \mbox{such that}\; (\frac{x^k-\ox}{t_k},\frac{y^k-\widehat y^k}{t_k})\to (w,\eta).
\end{cases}
$$
\end{Definition}
Using the definition of the outer limit of a family of sets, one can see that the semi-strict graphical derivative of $F$ can be equivalently expressed as
\begin{equation}\label{ulss}
 \Hat DF(\ox, \oy)(w)= \limsup_{\substack{
t\searrow 0, \, w'\to w \\
y \to \oy,\; y\in F(\ox)}} 
\frac{F(\ox+tw') - y}{t}, \quad w\in \X.
\end{equation}
By definition, we can immediately conclude that $DF(\ox, \oy)(w)\subset  \Hat DF(\ox, \oy)(w)$ for any $w\in \X$. Our interest in the semi-strict graphical 
derivative resides in Theorem~\ref{error} in which we show using this notion that the second-order sufficient condition in \eqref{sosc1}   ensures an error bound for   
 the KKT system  of \eqref{comp}. In order to do so, we should investigate how $DF(\ox, \oy)$ and $\Hat DF(\ox, \oy)$ relate to each other when $F$ is   subgradient mappings.
We begin with a result that can be used as our guide to pursue such a relationship. Recall first 
from \cite[Definition~7.25]{rw} that an lsc function $f: \X\to \oR$ is said to be subdifferentially regular at $\ox\in \X$  if    $f(\ox)$ is   finite and $\rN_{\epi f}(\ox, f(\ox))= N_{\epi f}(\ox,f(\ox))$.

\begin{Proposition}\label{prop:gr}
Assume that $f: \X\to \oR$  and $(\ox, \ov)\in \gph \partial f$. If $\sub f$ is proto-differentiable at $\ox$ for $\ov$,
then we have 
\begin{equation}\label{gr1}
\cl\big(D(\partial f)(\ox, \ov)(w) - T_{\sub f(\ox)}(\ov)\big) \subset \Hat D(\partial f)(\ox, \ov)(w)\quad\textrm{ for all }\quad w\in \X. 
\end{equation}
In addition, if $f$ is subdifferentially regular at $\ox$, then $T_{\sub f(\ox)}(\ov)$ in the left-hand side of \eqref{gr1} can be replaced with $K_f(\ox, \ov)^*$.
\end{Proposition}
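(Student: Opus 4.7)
The plan is to verify \eqref{gr1} by a direct sequential construction that combines the tangent-cone definition with proto-differentiability, then to deduce the closure from the standard fact that outer limits of sets are closed, and finally to handle the subdifferentially regular case via support-function / polarity duality.

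For the first inclusion, I would pick any $\eta\in D(\sub f)(\ox,\ov)(w)-T_{\sub f(\ox)}(\ov)$ and decompose it as $\eta=\eta_1-\eta_2$ with $\eta_1\in D(\sub f)(\ox,\ov)(w)$ and $\eta_2\in T_{\sub f(\ox)}(\ov)$. The tangent-cone definition delivers a sequence $t_k\searrow 0$ and $\eta_2^k\to\eta_2$ with $\ov+t_k\eta_2^k\in\sub f(\ox)$. The decisive step is to apply proto-differentiability of $\sub f$ at $\ox$ for $\ov$ along the \emph{same} sequence $t_k$, which produces $w^k\to w$ and $\eta_1^k\to\eta_1$ with $\ov+t_k\eta_1^k\in\sub f(\ox+t_kw^k)$. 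Setting $x^k:=\ox+t_kw^k$, $y^k:=\ov+t_k\eta_1^k$, and $\widehat y^k:=\ov+t_k\eta_2^k$, every sequential requirement in Definition~\ref{ssgd} is fulfilled and one obtains $\eta\in\Hat D(\sub f)(\ox,\ov)(w)$. Because $\Hat D(\sub f)(\ox,\ov)(w)$ is an outer limit of a parameterized family (see \eqref{ulss}), it is automatically closed, so the inclusion persists after taking the closure on the left of \eqref{gr1}.

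For the ``in addition'' clause, under subdifferential regularity of $f$ at $\ox$ the set $\sub f(\ox)$ coincides with $\Hat\sub f(\ox)$ and is thus closed and convex, and by a standard fact from variational analysis $\d f(\ox)$ then equals the support function of $\sub f(\ox)$. Consequently, $w\in K_f(\ox,\ov)$ exactly when $\ov$ maximizes $\langle\cdot,w\rangle$ over $\sub f(\ox)$, i.e., when $w\in N_{\sub f(\ox)}(\ov)$; hence $K_f(\ox,\ov)=N_{\sub f(\ox)}(\ov)$, and the convex-analytic polarity between normal and tangent cones of the convex set $\sub f(\ox)$ gives $K_f(\ox,\ov)^*=T_{\sub f(\ox)}(\ov)$, which validates the claimed replacement.

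The main obstacle is the sequencing in the first paragraph: proto-differentiability is consumed precisely to arrange that a single $t_k\searrow 0$ simultaneously drives the tangent-cone approximation of $\eta_2$ and the graphical-derivative approximation of $\eta_1$. Without this coupling one could only match rates along separate subsequences, and the difference $y^k-\widehat y^k$ would no longer align with $\eta$ along a common scale $t_k$; everything else reduces to bookkeeping against Definition~\ref{ssgd} and elementary duality between support and critical/normal cones.
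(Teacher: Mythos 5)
Your argument is correct and follows essentially the same route as the paper's proof: decompose $\eta=\eta_1-\eta_2$, run the tangent-cone sequence for $\eta_2$ and the proto-derivative sequence for $\eta_1$ along a common $t_k\searrow 0$, read off membership in $\Hat D(\partial f)(\ox,\ov)(w)$ from Definition~\ref{ssgd}, and absorb the closure via the closedness of outer limits. The ``in addition'' clause is likewise handled the same way, via \cite[Theorem~8.30]{rw} giving $K_f(\ox,\ov)=N_{\partial f(\ox)}(\ov)$ and polarity for the convex set $\partial f(\ox)$.
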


\begin{proof}
Observe that the set $\Hat D(\partial f)(\ox, \oy)(w)$ is   closed due to   \eqref{ulss}  and the fact that  the outer limit of a sequence of sets is always closed (cf. \cite[Prposition~4.4]{rw}). 
To prove \eqref{gr1},  it suffices to take $\eta \in D(\partial f)(\ox, \ov)(w)$ and $\zeta \in T_{\sub f(\ox)}(\ov)$, and show    that  $\eta-\zeta\in \Hat D(\partial f)(\ox, \ov)(w)$. 
By $\zeta \in T_{\partial f(\ox)}(\ov)$, we find sequences $t_k\searrow0$ and $\zeta^k \to \zeta$ with $\ov+ t_k\zeta^k \in \partial f(\ox)$. 
Moreover,   proto-differentiability of $f$ at $\ox$ for $\ov$ implies  that there exist sequences $w^k \to w$ and $\eta^k\to\eta$ with $\ov+t_k\eta^k \in \partial f(\ox+t_kw^k)$. 
Putting all of these together,  we have
$(\ov+t_k\zeta^k)+ t_k(\eta^k -\zeta^k) \in \partial f(\ox+t_kw^k) $, which leads us to 
  $\eta - \zeta \in \Hat D(\partial f)(\ox, \ov)(w)$ and hence proves \eqref{gr1}.
  
  Assume now that $f$ is subdifferentially regular at $\ox$. According to \cite[Theorem~8.30]{rw}, we know that $\partial f(\ox)$ is closed and convex and that
\begin{equation}\label{gr2}
K_f(\ox, \ov) =  N_{\partial f(\ox)}(\ov),
\end{equation}
which clearly justifies the final claim. 
\end{proof}

We next examine whether the opposite inclusion in \eqref{gr1} holds for important classes of the modeling function  $g$ in the composite optimization problem \eqref{comp}. 
Before moving any further, let us show  that if $f:\X\to \oR$ is convex and $(\ox,\ov)\in \gph \sub f$, then we always have 
\begin{equation}\label{gr3}
\dom \Hat D(\partial f)(\ox, \ov) \subset K_f(\ox, \ov).
\end{equation}
Indeed, assume that for  $w\in \X$ there exists $\eta \in \Hat D(\partial f)(\ox, \ov)(w)$. 
By definition, we can select sequences $t_k\searrow0$, $w^k\to w$, $v^k \in \partial f(\ox+t_kw^k)$, and $\Hat v^k\to \ov$ with $\Hat v^k\in \partial f(\ox)$ such that $(v^k - \Hat v^k)/t_k \to \eta$. 
In particular, we have $v^k \to \ov$ and
\begin{equation*}
\frac{f(\ox+t_kw^k) -f(\ox)}{t_k} \leq \la v^k, w^k\ra
\end{equation*}
for all $k$. 
Passing to the limit as $k\to\infty$ gives us $\d f(\ox)(w) \leq \la \ov, w\ra$ that actually holds as equality due to $\ov \in \partial f(\ox)$ and \cite[Exercise~8.4]{rw}. 
Thus, $w \in K_f(\ox, \ov)$, which proves \eqref{gr3}. 

\begin{Example}\label{ex:4}
Assume that  $f : \X \to \oR $ with $\X=\R^n$ 
is a CPLQ function. Recall  that $f$ is called  piecewise linear-quadratic if $\dom f = \cup_{i=1}^{s} C_i$ with $s\in \N$ and $C_i $ being polyhedral convex  sets for $i = 1, \ldots, s$, and if $f$ has a representation of the form
\begin{equation*} \label{PWLQ}
f(x) = \sm \langle A_i x ,x \rangle + \langle a_i ,x \rangle + \alpha_i  \quad \mbox{for all} \quad  x \in C_i,
\end{equation*}
where $A_i$ is an $n \times n$ symmetric matrix, $a_i\in \R^n$, and $\alpha_i\in \R$ for $i = 1, \ldots, s$.
Pick $\ox \in \dom f$ and $\ov \in \partial f(\ox)$. 
Since $f$ is convex, it is subdifferentially  regular at $\ox$, and $\partial f$ is proto-differentiable at $\ox$ for $\ov$ according to  \cite[Proposition~13.9]{rw} and    \cite[Theorem~13.40]{rw}. 
Proposition~\ref{prop:gr} tells us  that the inclusion in \eqref{gr1} holds. 
If $w \notin \dom \Hat D(\partial f)(\ox, \ov)$, then the opposite inclusion is trivial. 
We now verify the opposite  inclusion for any $w\in \dom \Hat D(\partial f)(\ox, \ov)$. It follows from \eqref{gr3} that $w\in K_f(\ox,\ov)$.  
Take  $\eta \in \Hat D(\partial f)(\ox, \ov)(w)$ and find, by definition, sequences $t_k\searrow0$, $w^k\to w$, $v^k \in \partial f(\ox+t_kw^k)$, and $\Hat v^k\to \ov$ with $\Hat v^k\in \partial f(\ox)$ such that $(v^k - \Hat v^k)/t_k \to \eta$. 
We then have
\begin{equation*}
(t_kw^k, v^k - \ov) = (\ox+t_kw^k, v^k) - (\ox, \ov) \in \big(\gph\partial f\big) - (\ox, \ov).
\end{equation*} 
It follows from the reduction lemma for CPLQ functions
from  \cite[Theorem~2.3]{s20} that there exists a neighborhood $\O$ of $(0, 0)\in \R^n\times\R^n$ for which we have 
\begin{equation*}
\big(\big(\gph\partial f\big) - (\ox, \ov)\big)\cap\O =\big(\gph D(\partial f)(\ox, \ov)\big)\cap\O.
\end{equation*}
We can assume for any $k$ sufficiently large that $(t_kw^k, v^k - \ov)\in \O$ and therefore obtain  $(w^k, (v^k - \ov)/t_k) \in \gph D(\partial g)(\ox, \ov)$. 
  By \cite[Proposition~2.4(b)]{s20} and $w\in K_f(\ox,\ov)$, 
  there exists a constant $\tau\geq 0$ such that for any $k$ sufficiently large we have 
\begin{equation}\label{ex4.1}
\frac{v^k-\ov}{t_k} \in D(\partial f)(\ox, \ov)(w^k) \subset D(\partial f)(\ox, \ov)(w) +\tau\|w^k-w\|\B.
\end{equation}
Note also that $\Hat v^k - \ov \in \sub f(\ox)-\ov \subset T_{\partial f(\ox)}(\ov)$. This, coupled with \eqref{gr2}, which   holds for the CPLQ function $f$,    brings us to  $(\Hat v^k - \ov)/t_k \in  K_f(\ox, \ov)^*$ for all $k$.
Using this and \eqref{ex4.1},  we find $\eta^k \in D(\partial f)(\ox, \ov)(w)$ and $\zeta^k \in \B$ such that
\begin{equation*}
\frac{v^k-\Hat v^k}{t_k}-\tau\|w^k-w\|\zeta^k=\eta^k-\frac{\Hat v^k -\ov}{t_k} \in D(\partial f)(\ox, \ov)(w) - K_f(\ox, \ov)^*
\end{equation*}
for any $k$ sufficiently large.
Passing to the limit as $k\to\infty$ and using the boundedness of $\zeta^k$, we arrive at the inclusion
\begin{equation*}
\eta \in \cl\big (D(\partial f)(\ox, \ov)(w) - K_f(\ox, \ov)^*\big),
\end{equation*}
which in turn demonstrates that   the inclusion in \eqref{gr1}  becomes equality for any $w\in \R^n$ for CPLQ functions.
Note that in this framework, we can drop the closure in \eqref{gr1}  to obtain 
$$
\Hat D(\partial f)(\ox, \ov)(w)=D(\partial f)(\ox, \ov)(w) - K_f(\ox, \ov)^*\quad \mbox{for any}\;\; w\in \X,
$$
since the right-hand side is the sum of two polyhedral convex sets,  which is known to be a polyhedral convex set, and hence is always closed. 
\end{Example}

Our next goal is to show that we should expect a similar result as the one in Example~\ref{ex:4} outside the polyhedral framework. 
We begin with proving the following result, which is of its own interest. Recall from 
 \cite{sh03} that a   function $g: \Y \to \oR$ is called ${\cal C}^2$-{\em decomposable} at $\ou\in \Y$ if $g(\ou)$ is finite and $g$  enjoys the composite representation 
 \begin{equation}\label{decom}
g(u) = g(\ou) + \vartheta(\Xi(u))\quad \textrm{ for }\quad u\in \O,
\end{equation}
where $\O \subset \Y$ is an open neighborhood of   $\ou$,  $\vartheta:\Z\to \oR$ is proper, lsc, and sublinear,  and $\Xi:\O\to \Z$ is ${\cal C}^2$-smooth with   $\oz:=\Xi(\ou) = 0$, and where $\Z$ is a finite dimensional Hilbert space. 
Note that by  \cite[Definition~3.18 and Exercise~3.19]{rw}, we always have    $g(\oz) = 0$, since $g$ is proper and sublinear. As shown in \cite[Example~2.4]{sh03}, the class of ${\cal C}^2$-{decomposable} functions
is a generalization of ${\cal C}^2$-{\em cone reducible} sets in the sense of \cite[Definition~3.135]{bs} through which many important constrained optimization problems can be studied. 
Recall that a closed convex set $C\subset \Y$ is ${\cal C}^2$-{\em cone reducible} at $\ou\in \Y$ to the closed convex cone $\Th\subset \Z$ if there exist a neighborhood $\O\subset \Y$ of $\ou$ and a $\C^2$-smooth mapping $\Xi: \Y \to\Z$ such that 
\begin{equation}\label{codef}
C\cap\O =\big\{ u\in \O\, \big|\, \Xi(u) \in \Theta\big\}, \quad \Xi(\ou) = 0, \; \textrm{ and }\quad \nabla \Xi(\ou): \Y \to \Z\; \textrm{ is surjective}.
\end{equation}
Note that the   surjectivity condition in \eqref{codef}  is not  covered by \eqref{decom} and has its own counterpart, called the {\em nondegeneracy} condition; see \eqref{ndc}. 
Besides the indicator functions of ${\cal C}^2$-{\em cone reducible} sets, it was shown in \cite[Examples~2.1 and 2.3]{sh03} that polyhedral functions and the sum of the largest eigenvalues   of a symmetric matrix are 
${\cal C}^2$-{decomposable}. The later was extended for singular values of a matrix in  \cite[Example~5.3.18]{mi}. The readers can find more examples of ${\cal C}^2$-{decomposable} functions in \cite[Section~5.3.3]{mi}.
Note that CPLQ functions may not enjoy the composite representation in \eqref{decom} in general; see Remark~\ref{notdec}. 

Suppose that the proper  function $g:\Y\to \oR$ is ${\cal C}^2$-decomposable at $\ou\in \Y$ with representation \eqref{decom}.  Define the  (Lagrange) multiplier mapping $M_{\ou,g}:\Y\times\Y\tto\Z$   by
\begin{equation}\label{mpg}
M_{\ou,g}(y, w) = \big\{\mu \in \Z\, \big|\, \nabla \Xi(\ou)^*\mu = y, \, \mu\in \partial \vartheta(\Xi(\ou)+w)\big\}.
\end{equation}
Given $\oy\in \partial g(\ou)$, the set  $M_{\ou,g}(\oy,0)$ is called    the set of Lagrange multipliers associated with $(\ou,\oy)$. 
Below, we summerize some of the main properties of the composite representation in \eqref{decom}, used often in our proofs in this paper.
\begin{Proposition}\label{sulin}
Assume that $g:\Y\to \oR$ is ${\cal C}^2$-decomposable at $\ou\in \Y$ with representation \eqref{decom} and that $\oy\in \partial g(\ou)$ and $\omu\in M_{\ou,g}(\oy,0)$. 
Then the following properties hold.
\begin{enumerate}[noitemsep,topsep=0pt]
\item If the basic constraint qualification (BCQ) condition
\begin{equation}\label{bcq}
N_{\dom \vt} (\Xi(\ou))\cap \ker \nabla \Xi(\ou)^*=\{0\}
\end{equation}
holds, then $g$ is subdifferentially regular at $\ou$.
\item For any $z\in \dom \vt$, it holds that $\sub \vt(z)\subset \sub \vt(\Xi(\ou))$. 
\item It always holds that $K_{\vartheta}(\Xi(\ou), \omu)=N_{\sub \vt(\Xi(\ou))}(\omu)$. If, in addition,  \eqref{bcq} is satisfied, then $K_g(\ou,\oy)=N_{\sub g(\ou)}(\oy)$.
\item If the dual condition 
\begin{equation}\label{duq}
D (\sub \vartheta) (\Xi(\ou),\omu)(0)\cap\ker\nabla\Xi(\ou)^*=\{0\}
\end{equation}
holds, then
$K_g(\ou,\oy)^*= \nabla\Xi(\ox)^*K_{\vartheta}(\Xi(\ou), \omu)^*$.
\item If  \eqref{bcq} is satisfied, the subgradient mapping $\sub g$ is calm at $\ou$ for $\oy$.
\end{enumerate}
\end{Proposition}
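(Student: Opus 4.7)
The entire proposition is a collection of structural calculus rules for the composite representation $g = g(\ou) + \vartheta\circ\Xi$, so the strategy is to push each assertion through the corresponding standard fact about the sublinear inner function $\vt$ and then transfer it to $g$ via the smooth outer map $\Xi$, using the BCQ or dual condition as a qualification hypothesis.

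For \textbf{(a)}, the plan is to invoke the chain rule for subdifferentially regular functions. Since $\vt$ is sublinear it is convex and hence subdifferentially regular everywhere on its domain, and $\Xi$ is $\C^2$-smooth. The BCQ condition \eqref{bcq} is precisely the qualification needed to apply \cite[Theorem~10.6]{rw} (or \cite[Exercise~10.7]{rw}) to the composition $\vt\circ\Xi$, yielding subdifferential regularity of $g$ at $\ou$ together with the chain-rule formula $\partial g(\ou) = \nabla\Xi(\ou)^*\partial\vt(\Xi(\ou))$. For \textbf{(b)}, I would exploit sublinearity directly. Given $v\in\sub\vt(z)$, the subgradient inequality at both $w=2z$ and $w=0$, combined with $\vt(2z) = 2\vt(z)$ and $\vt(0) = 0$ (which follow from \cite[Exercise~3.19]{rw}), pins down $\langle v,z\rangle = \vt(z)$. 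Substituting this back into the subgradient inequality gives $\vt(w)\ge\langle v,w\rangle$ for all $w\in\Z$, which is exactly $v\in\sub\vt(0) = \sub\vt(\Xi(\ou))$.

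For \textbf{(c)}, since $\vt$ is convex and lsc it is subdifferentially regular, so \cite[Theorem~8.30]{rw} yields the first identity $K_\vt(\Xi(\ou),\omu) = N_{\sub\vt(\Xi(\ou))}(\omu)$. Under BCQ, part (a) shows $g$ is also subdifferentially regular at $\ou$, so the same theorem applies to deliver $K_g(\ou,\oy) = N_{\sub g(\ou)}(\oy)$. For \textbf{(d)}, the plan is to apply the smooth chain rule for subderivatives: $\d g(\ou)(w) = \d\vt(\Xi(\ou))(\nabla\Xi(\ou)w)$, combined with the identity $\la\oy,w\ra = \la\omu,\nabla\Xi(\ou)w\ra$ coming from $\oy=\nabla\Xi(\ou)^*\omu$. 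This gives the preimage formula
\begin{equation*}
K_g(\ou,\oy) = \{w\in\X \mid \nabla\Xi(\ou)w\in K_\vt(\Xi(\ou),\omu)\}.
\end{equation*}
Taking polars, the general rule produces $\cl\bigl(\nabla\Xi(\ou)^* K_\vt(\Xi(\ou),\omu)^*\bigr)$; the role of the dual condition \eqref{duq} is to serve as the qualification that removes the closure, which I would verify by interpreting $D(\sub\vt)(\Xi(\ou),\omu)(0)$ as a recession direction controlling when the image under $\nabla\Xi(\ou)^*$ fails to be closed.

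Finally, for \textbf{(e)}, under BCQ the chain rule from part (a) gives $\sub g(u) = \nabla\Xi(u)^*\sub\vt(\Xi(u))$ for all $u$ near $\ou$. Calmness of $\sub g$ at $\ou$ for $\oy$ then reduces to calmness of $\sub\vt$ at $\Xi(\ou)$ for $\omu$, transported through the $\C^1$-smoothness of $\nabla\Xi$ and the boundedness of $\sub\vt(z)\subset\sub\vt(0)$ guaranteed by (b). I would obtain the calmness of $\sub\vt$ from the sublinear structure: writing $\vt = \sigma_C$ with $C = \sub\vt(0)$, the mapping $\sub\vt$ becomes an exposed-face selector of $C$, and combining its outer semicontinuity with the boundedness from (b) yields the desired calmness estimate through the distance-to-face argument. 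I expect \textbf{(e)} to be the main obstacle, since it is the only assertion where outer semicontinuity is not automatic and where one must carry the Lipschitz-like estimate from the structured but possibly non-polyhedral set $\sub\vt(0)$ through the composition $\nabla\Xi(u)^*\sub\vt(\Xi(u))$ while controlling the dependence on $u$.
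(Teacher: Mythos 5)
Parts (a)--(c) of your plan are correct and essentially the paper's route: (a) is the chain rule of \cite[Theorem~10.6]{rw} under \eqref{bcq} (the paper cites \cite[Exercise~10.25]{rw}), your elementary sublinearity argument for (b) is a valid substitute for the paper's citation of \cite[Corollary~8.25]{rw} (note $\Xi(\ou)=0$ by \eqref{decom}), and (c) is exactly \eqref{gr2} applied to $\vt$ and, via (a), to $g$. Your (d) also follows the paper's path, but two steps are left dangling. First, the subderivative chain rule you invoke itself requires \eqref{bcq}, so you must first show that \eqref{duq} implies \eqref{bcq}; this holds because $N_{\dom\vt}(0)$ is the recession cone of $\sub\vt(0)$, hence is contained in $T_{\sub\vt(0)}(\omu)\subset D(\sub\vt)(0,\omu)(0)$. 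Second, to ``remove the closure'' you need the concrete identification $D(\sub\vt)(\Xi(\ou),\omu)(0)=K_{\vt}(\Xi(\ou),\omu)^*$, which the paper obtains from twice epi-differentiability of the sublinear $\vt$ (so that $\d^2\vt(\Xi(\ou),\omu)=\dd_{K_\vt(\Xi(\ou),\omu)}$) and \cite[Theorem~13.40]{rw}; with it, \eqref{duq} reads $K_\vt(\Xi(\ou),\omu)^*\cap\ker\nabla\Xi(\ou)^*=\{0\}$, which is precisely the qualification for the normal-cone chain rule \cite[Exercise~10.26]{rw} and also forces $\nabla\Xi(\ou)^*K_\vt(\Xi(\ou),\omu)^*$ to be closed.

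The genuine gap is in (e). Your argument leans on ``the boundedness of $\sub\vt(z)\subset\sub\vt(0)$,'' but part (b) gives only the inclusion, and $\sub\vt(0)$ is unbounded for most functions covered by \eqref{decom}: for the indicator of a closed convex cone $C$ (the $\C^2$-cone reducible case, e.g.\ SDP) one has $\sub\vt(0)=C^*$. Consequently the estimate $\|(\nabla\Xi(u)-\nabla\Xi(\ou))^*\mu\|\le O(\|u-\ou\|)\,\|\mu\|$, which is what transfers the calmness through the composition, is not controlled by any global bound on $\sub\vt(0)$. (Note also that calmness of $\sub\vt$ itself at $0$ is trivial from (b), with modulus zero, so the ``exposed-face selector / distance-to-face'' machinery is not where the difficulty lies.) The missing idea is a compactness argument driven by \eqref{bcq}: if $y=\nabla\Xi(u)^*\mu$ with $\mu\in\sub\vt(\Xi(u))\subset\sub\vt(0)$ and $y$ ranging in a bounded neighborhood of $\oy$, and if every such choice of $\mu$ had $\|\mu\|\to\infty$ as $u\to\ou$, then $\mu/\|\mu\|$ would accumulate at a unit vector of the recession cone of $\sub\vt(0)$, namely $N_{\dom\vt}(0)$, lying in $\ker\nabla\Xi(\ou)^*$, contradicting \eqref{bcq}. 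This bounds the representing multipliers and completes your reduction. For the record, the paper disposes of (e) by citing (b) together with \cite[Theorem~4.1]{r22}, so your self-contained route is genuinely different and arguably more informative --- but it needs the above repair before it works.
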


\begin{proof} The conclusion in (a) is well-known; see \cite[Exercise~10.25]{rw}. The inclusion in (b) results from \cite[Corollary~8.25]{rw}.
Both claims in (c) result from (a) and \eqref{gr2}.
To prove (d),  the inclusion in \eqref{inc} tells us that the dual condition in \eqref{duq} yields \eqref{bcq}. Thus, it follows from the chain rule for the subderivative in \cite[Theorem~10.6]{rw} that 
\begin{align}
K_g(\ou,\oy) &= \big\{w\, \big|\, \d g(\ou)(w) =  \d\vt(\Xi(\ou))(\nabla\Xi(\ou)w) = \la \oy, w\ra= \la \omu,  \nabla\Xi(\ox)w\ra\big\} \nonumber\\ 
&= \big\{w\, \big|\, \nabla\Xi(\ox)w \in K_{\vartheta}(\Xi(\ou), \omu)\big\}.\label{critc}
\end{align}
Employing the chain rule for normal cones in \cite[Exercise~10.26]{rw} allows us to conclude that 
$$
K_g(\ou,\oy)^*=N_{K_g(\ou,\oy)}(0)=\nabla\Xi(\ou)^*N_{K_{\vartheta}(\Xi(\ou), \omu)}(0)=\nabla\Xi(\ou)^*K_{\vartheta}(\Xi(\ou), \omu)^*,
$$
which proves (d).  The claim in (e) results from (b) and \cite[Theorem~4.1]{r22}.
\end{proof}

As pointed out earlier, the equality in \eqref{gr1} plays a crucial role in our main result of this section. Below, we establish a chain rule for this property. 

\begin{Proposition}\label{ex:5}
Assume that $g:\Y\to \oR$ is ${\cal C}^2$-decomposable at $\ou\in \Y$ with representation \eqref{decom} and that $\oy\in \partial g(\ou)$ and $\omu\in M_{\ou,g}(\oy,0)$. 
If the outer function $\vartheta$ in \eqref{decom} enjoys the property 
\begin{equation}\label{ex5.1}
\Hat D(\partial \vartheta)(\Xi(\ou), \omu)(\xi) =\cl\big (D(\partial \vartheta)(\Xi(\ou), \omu)(\xi) - K_{\vartheta}(\Xi(\ou), \omu)^*\big) \quad\textrm{ for all }\quad \xi\in K_{\vartheta}(\Xi(\ou), \omu),
\end{equation}
and the nondegeneracy condition 
\begin{equation}\label{ndc}
\para\{\sub \vartheta(\Xi(\ou))\}\cap \ker \nabla \Xi(\ou)^*=\{0\}
\end{equation}
holds, then $g$ satisfies the same property, meaning that 
\begin{equation}\label{ex5.2}
\Hat D(\partial g)(\ou, \oy)(w) =\cl\big(D(\partial g)(\ou, \oy)(w) - K_g(\ou, \oy)^*\big)\quad\textrm{ for all }\quad w\in \Y.
\end{equation}
\end{Proposition}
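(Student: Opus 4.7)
The plan is to pull back the hypothesized equality~\eqref{ex5.1} for $\vartheta$ to the desired equality~\eqref{ex5.2} for $g$ via a chain rule for semi-strict graphical derivatives of subgradient mappings. Nondegeneracy~\eqref{ndc} plays two roles: it yields BCQ~\eqref{bcq}, which via Proposition~\ref{sulin}(a),(d) supplies subdifferential regularity of $g$ at $\ou$ together with the identity $K_g(\ou,\oy)^* = \nabla\Xi(\ou)^*K_\vartheta(\oz,\omu)^*$; and it forces $\omu$ to be the unique Lagrange multiplier with $\nabla\Xi(\ou)^*$ injective on the parallel subspace $\para\{\sub\vartheta(\oz)\}$. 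The inclusion $\supset$ in~\eqref{ex5.2} is then a direct application of Proposition~\ref{prop:gr}, once proto-differentiability of $\partial g$ at $(\ou,\oy)$ is transferred from that of $\partial\vartheta$ at $(\oz,\omu)$ through the local chain rule $\partial g(u) = \nabla\Xi(u)^*\partial\vartheta(\Xi(u))$, which holds in a neighborhood of $\ou$ under BCQ.

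For the harder inclusion $\subset$, fix $\eta\in\Hat D(\partial g)(\ou,\oy)(w)$ and, following Definition~\ref{ssgd}, extract sequences $t_k\searrow 0$, $x^k\to\ou$ with $(x^k-\ou)/t_k\to w$, $y^k\to\oy$ with $y^k\in\partial g(x^k)$, and $\Hat y^k\to\oy$ with $\Hat y^k\in\partial g(\ou)$, satisfying $(y^k-\Hat y^k)/t_k\to\eta$. The same chain rule decomposes these subgradients as $y^k = \nabla\Xi(x^k)^*\mu^k$ and $\Hat y^k = \nabla\Xi(\ou)^*\Hat\mu^k$ with $\mu^k\in\partial\vartheta(\Xi(x^k))$ and $\Hat\mu^k\in\partial\vartheta(\oz)$. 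Proposition~\ref{sulin}(b) applied to $\vartheta$ yields $\mu^k\in\partial\vartheta(\oz)$, which is bounded, and outer semicontinuity of $\partial\vartheta$ combined with uniqueness of the multiplier (from nondegeneracy) forces $\mu^k,\Hat\mu^k\to\omu$. The algebraic identity
\begin{equation*}
\frac{y^k-\Hat y^k}{t_k} \;=\; \nabla\Xi(\ou)^*\frac{\mu^k-\Hat\mu^k}{t_k} \;+\; \Bigl(\frac{\nabla\Xi(x^k)-\nabla\Xi(\ou)}{t_k}\Bigr)^{\!*}\mu^k,
\end{equation*}
whose right-most term converges to $(\nabla^2\Xi(\ou)w)^*\omu$, shows that $\nabla\Xi(\ou)^*(\mu^k-\Hat\mu^k)/t_k$ is bounded; since $\mu^k-\Hat\mu^k\in\para\{\sub\vartheta(\oz)\}$, the injectivity from~\eqref{ndc} makes $(\mu^k-\Hat\mu^k)/t_k$ itself bounded, and along a subsequence it converges to some $\xi\in\Hat D(\partial\vartheta)(\oz,\omu)(\nabla\Xi(\ou)w)$.

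Now $w\in K_g(\ou,\oy)$ by~\eqref{gr3}, so~\eqref{critc} gives $\nabla\Xi(\ou)w\in K_\vartheta(\oz,\omu)$, and the hypothesis~\eqref{ex5.1} produces $\xi_j'\in D(\partial\vartheta)(\oz,\omu)(\nabla\Xi(\ou)w)$ and $\zeta_j'\in K_\vartheta(\oz,\omu)^*$ with $\xi_j'-\zeta_j'\to\xi$. A parallel second-order chain rule for the plain graphical derivative, valid under nondegeneracy, places each $\nabla\Xi(\ou)^*\xi_j' + (\nabla^2\Xi(\ou)w)^*\omu$ in $D(\partial g)(\ou,\oy)(w)$, while each $\nabla\Xi(\ou)^*\zeta_j'$ lies in $K_g(\ou,\oy)^*$ by Proposition~\ref{sulin}(d). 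Passing to the limit in the displayed identity gives $\eta = \nabla\Xi(\ou)^*\xi + (\nabla^2\Xi(\ou)w)^*\omu$, which is therefore the limit of $\nabla\Xi(\ou)^*\xi_j' + (\nabla^2\Xi(\ou)w)^*\omu - \nabla\Xi(\ou)^*\zeta_j'$ as $j\to\infty$, placing $\eta$ in $\cl\bigl(D(\partial g)(\ou,\oy)(w) - K_g(\ou,\oy)^*\bigr)$.

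The main obstacle is the boundedness of the multiplier ratios $(\mu^k-\Hat\mu^k)/t_k$: without Proposition~\ref{sulin}(b) to trap them in $\para\{\sub\vartheta(\oz)\}$, or nondegeneracy to ensure injectivity of $\nabla\Xi(\ou)^*$ on that subspace, the reduction from $\Hat D(\partial g)$ to $\Hat D(\partial\vartheta)$ cannot be carried out.
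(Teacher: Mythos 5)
Your proposal follows essentially the same route as the paper: decompose $y^k$ and $\Hat y^k$ through the subdifferential chain rule, use the algebraic identity for $(y^k-\Hat y^k)/t_k$ together with the nondegeneracy condition \eqref{ndc} to bound $(\mu^k-\Hat\mu^k)/t_k$ inside $\para\{\sub\vartheta(\Xi(\ou))\}$, identify its limit as an element of $\Hat D(\partial\vartheta)(\Xi(\ou),\omu)(\nabla\Xi(\ou)w)$, and then transfer \eqref{ex5.1} back to $g$ via the proto-derivative formula of Theorem~\ref{dess}(b) and the identity $K_g(\ou,\oy)^*=\nabla\Xi(\ou)^*K_\vartheta(\Xi(\ou),\omu)^*$ from Proposition~\ref{sulin}(d); the reverse inclusion via Proposition~\ref{prop:gr} is also handled as in the paper. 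Your explicit verification that $\nabla\Xi(\ou)w\in K_\vartheta(\Xi(\ou),\omu)$ (via \eqref{gr3} and \eqref{critc}) before invoking \eqref{ex5.1} is a point the paper leaves implicit, and is welcome.

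There is, however, one step whose justification is wrong: you assert that $\mu^k\in\partial\vartheta(\Xi(\ou))$ ``which is bounded,'' and derive $\mu^k,\Hat\mu^k\to\omu$ from this plus outer semicontinuity. For a proper lsc sublinear $\vartheta$ the set $\partial\vartheta(0)$ is bounded only when $\vartheta$ is finite everywhere; in the motivating examples ($\vartheta=\delta_\Theta$ for a closed convex cone $\Theta$ arising from ${\cal C}^2$-cone reducibility, as in SDP or second-order cone programming) one has $\partial\vartheta(0)=\Theta^*$, which is an unbounded cone. The convergence $\mu^k\to\omu$ and $\Hat\mu^k\to\omu$ is still true, but it must be obtained differently: the nondegeneracy condition \eqref{ndc} implies the dual condition \eqref{duq} (via \eqref{nddu}), and Proposition~\ref{calag}(d) then gives the outer Lipschitzian property $M_{\ou,g}(y,w)\subset\{\omu\}+\kappa(\|y-\oy\|+\|w\|)\B$, which applied to $\mu^k\in M_{\ou,g}(y^k,\Xi(\ou+t_kw^k))$ and $\Hat\mu^k\in M_{\ou,g}(\Hat y^k,0)$ yields the desired convergence (equivalently, boundedness of $\{\mu^k\}$ follows from the BCQ \eqref{bcq} by the normalization argument in the proof of that proposition, after which uniqueness of the multiplier and closedness of $\gph\partial\vartheta$ finish the job). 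Since this convergence is needed both to identify the limit of $\big((\nabla\Xi(\ou+t_kw^k)-\nabla\Xi(\ou))/t_k\big)^*\mu^k$ and to place the limit of $(\mu^k-\Hat\mu^k)/t_k$ in $\Hat D(\partial\vartheta)(\Xi(\ou),\omu)(\nabla\Xi(\ou)w)$ per Definition~\ref{ssgd}, you should replace the boundedness claim with the appeal to Proposition~\ref{calag}(d); with that repair the argument is complete.
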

\begin{proof}
Pick $w\in \dom \Hat D(\partial g)(\ou, \oy)$ and $\eta\in\Hat D(\partial g)(\ou, \oy)(w)$.  By definition, we find sequences $t_k\searrow0$, $w^k\to w$, $y^k \in \partial g(\ou+t_kw^k)$, 
and $\Hat y^k\to \oy$ with $\Hat y^k\in \partial g(\ou)$ such that $(y^k - \Hat y^k)/t_k \to \eta$. By the subdifferential chain rule from \cite[Example~10.8]{rw} available due to the assumed nondegeneracy condition, 
we find  $\mu^k \in \partial \vartheta(\Xi(\ou +t_kw^k))$ and $\Hat\mu^k\in\partial \vartheta(\Xi(\ou))$   such that $\nabla \Xi(\ou +t_kw^k)^*\mu^k = y^k$ and $\nabla \Xi(\ou)^*\Hat\mu^k = \Hat y^k$, respectively. 
Thus, we have $\mu^k\in M_{\ou,g}(y^k, t_k\nabla \Xi(\ou)w^k+o(t_k))$ and $\Hat\mu^k\in M_{\ou,g}(\Hat y^k, 0)$. 
Note that it follows from \eqref{ssub} and Proposition~\ref{sulin}(c) that 
\begin{equation}\label{nddu}
D(\partial \vartheta)(\Xi(\ou), \omu)(0)= K_{\vartheta}(\Xi(\ou), \omu)^*=T_{\sub \vt(\Xi(\ou))}(\omu)\subset \para\{\sub \vartheta(\Xi(\ou))\},
\end{equation}
where the last inclusion results from the definition of the tangent cone. Thus, we conclude from the assumed nondegeneracy condition that 
  the dual condition in \eqref{duq} is satisfied. Appealing now to Theorem~\ref{calag}(d), we can conclude that both sequences $\{\mu^k\b$ and $\{\Hat \mu^k\b$ converge to $\omu$ as $k\to\infty$. 
By passing to a subsequence, we can assume without loss of generality that  the sequence $\{\nabla\Xi(\ox)^*(\mu^k-\Hat\mu^k)/t_k\b$ is convergent, since 
\begin{align}
\frac{y^k-\Hat y^k}{t_k} &=\frac{ \nabla \Xi(\ou +t_kw^k)^*\mu^k - \nabla \Xi(\ou)^*\Hat\mu^k}{t_k}\nonumber\\
&=\Big(\frac{\nabla \Xi(\ou +t_kw^k) -\Xi(\ou)}{t_k}\Big)^*\mu^k +\nabla\Xi(\ou)^*\frac{\mu^k-\Hat\mu^k}{t_k}.\label{ex5.3}
\end{align}
We claim now that the sequence $\{\|\mu^k-\Hat\mu^k\|/t_k\b$ is bounded. Otherwise, passing to a subsequence, if necessary, we can assume
 that $\|\mu^k-\Hat\mu^k\|/t_k\to\infty$, which implies that $\mu^k -\Hat\mu^k \neq 0$ for any  $k$ sufficiently large. 
Again, we can assume by passing to a subsequence, if necessary,  that $\{(\mu^k-\Hat\mu^k)/\|\mu^k-\Hat\mu^k\|\b$ converges to some $\xi \neq 0$. 
Thus, we get $\xi\in \ker \nabla\Xi(\ox)^*$, since  
\begin{equation*}
\nabla\Xi(\ox)^*\xi = \lim_{k\to\infty} \nabla\Xi(\ox)^*\frac{\mu^k-\Hat\mu^k}{\|\mu^k-\Hat\mu^k\|} = \lim_{k\to\infty} \nabla\Xi(\ox)^*\frac{\mu^k-\Hat\mu^k}{t_k}\cdot\frac{t_k}{\|\mu^k-\Hat\mu^k\|} =0. 
\end{equation*}
On the other hand, it follows from Proposition~\ref{sulin}(b) that $\mu^k\in \partial \vt(\Xi(\ou+t_kw^k))\subset \partial \vt(\Xi(\ou))$, which in turn yields 
 $(\mu^k-\Hat\mu^k)/\|\mu^k-\Hat\mu^k\| \in \para \{\partial \vt(\Xi(\ou))\}$.  Thus, we arrive at  $\xi \in \para\{ \partial \vt(\Xi(\ou)\}$, which leads us to 
 $\xi =0$ due to \eqref{ndc},  a contradiction.  This  tells us that  $\{(\mu^k-\Hat\mu^k)/t_k\b$ is convergent to some $\zeta\in \Z$. Recall that $\mu^k \in \partial \vartheta(\Xi(\ou +t_kw^k))$ and $\Hat\mu^k\in\partial \vartheta(\Xi(\ou))$.
Since $\big(\Xi(\ou +t_kw^k)-\Xi(\ou)\big)/t_k\to \nabla\Xi(\ou)w$, it follows from Definition~\ref{ssgd} that 
 $\zeta \in \Hat D(\partial \vt)(\Xi(\ou), \omu)(\nabla \Xi(\ou)w)$. 
Passing to the limit in  \eqref{ex5.3}  brings us to 
\begin{equation*}
\eta - \nabla^2\la \omu, \Xi\ra(\ou)w =\nabla\Xi(\ou)^*\zeta \in \nabla \Xi(\ou)^*\Hat D(\partial \vt)(\Xi(\ou), \omu)(\nabla \Xi(\ox)w),
\end{equation*}
which in turn implies via \eqref{ex5.1}, Theorem~\ref{dess}(b), and Proposition~\ref{sulin} that 
\begin{align*}
\eta &\in \nabla^2\la \omu, \Xi\ra(\ou)w + \nabla\Xi(\ou)^*\cl\big( \Hat D(\partial \vt)(\Xi(\ou), \omu)(\nabla \Xi(\ou)w) - K_{\vartheta}(\Xi(\ou), \omu)^*\big)\nonumber\\
&=  \cl\big(  \nabla^2\la \omu, \Xi\ra(\ou)w +\nabla\Xi(\ou)^*\Hat D(\partial \vt)(\Xi(\ou), \omu)(\nabla \Xi(\ou)w) - \nabla\Xi(\ou)^*K_{\vartheta}(\Xi(\ou), \omu)^*\big)\\
&= \cl\big(D(\sub g)(\ou,\oy)(w)- K_g(\ou,\oy)^*\big).
\end{align*}
This proves the inclusion `$\subset$'  in \eqref{ex5.2}. To justify the opposite inclusion, observe from \eqref{inc} and \eqref{ndc} that the BCQ in \eqref{bcq} is 
satisfied. By Proposition~\ref{sulin}, $g$ is subdifferentially regular at $\ou$. Moreover, it follows from Theorem~\ref{dess}(b) that $\sub g$ is proto-differentiable at $\ou$ for $\oy$.
Thus, Proposition~\ref{prop:gr}, together with \eqref{gr2}, proves the inclusion `$\supset$' in \eqref{ex5.2} and hence completes the proof.  
\end{proof}

Using the established chain rule in the proposition above, we are going next to show that the inclusion in \eqref{gr1}  becomes equality for the second-order cone.

\begin{Example}\label{ex:6}
Let  ${\cal Q}$ stand for the second-order cone in $\R^n$ described by
\begin{equation*}
{\cal Q}:=\big\{u = (u_0, u_r) \in \R\times\R^{n-1}\, \big|\, \|u_r\|\leq u_0\big\}.
\end{equation*}
Our goal is to show that the inclusion in \eqref{gr1} becomes equality when $f=\dd_{\cal Q}$. 
To achieve this goal, we should consider there different cases for a point $\ou\in {\cal Q}$. If $\ou\in \inte {\cal Q}$, 
it is easy to see that ${\cal Q} $ can be locally represented by the set $\{u\in \O\, | \;\Xi(u)\in \Th\}$,  where $\Xi:\R^n \to \{0\}$ and $\Th=\{0\}$. 
If  $\ou\in \bd {\cal Q}\setminus \{0\}$, one can see that ${\cal Q} $ can be locally represented by $\{u\in \O\, | \; \Xi(u)\in \Th\}$,  where $\Xi:\R^n \to \R$ is defined by $\Xi(u)=\|u_r\|-u_0$ for any 
$u=(u_0,u_r)\in \R\times \R^{n-1}$ and $\Th=\R_-$. 
Thus, in both cases, ${\cal Q}$ is  ${\cal C}^2$-cone reducible to a polyhedral convex cone in the sense of \eqref{codef} and hence $\dd_{\cal Q}$ is ${\cal C}^2$-decomposable
with the outer function $\vt$ in the composite representation in \eqref{decom} being either $\dd_{\{0\}}$ or $\dd_{\R_-}$. Since these functions are CPLQ, it follows from 
  Proposition~\ref{ex:5} and Example~\ref{ex:4} that the inclusion in \eqref{gr1} becomes equality for $f=\dd_{\cal Q}$ at any nonzero $\ou \in {\cal Q}$ and $\oy \in N_{\cal Q}(\ou)$.
  
  The last case to consider is $\ou=0$. Take $\oy\in N_{\cal Q}(\ou)$ and observe from   \cite[Corollary~3.4]{HMS20} that 
  $N_{\cal Q}$ is proto-differentiable at $\ou$ for $\oy$ and that $DN_{\cal Q}(\ou, \oy)(w) = N_{K_{\cal Q}(\ou, \oy)}(w)$ for any $w\in \R^n$ with $K_{\cal Q}(\ou, \oy) = {\cal Q}\cap [\oy]^\perp$. 
To achieve our goal,  we only need  to justify  the inclusion
\begin{equation*}\label{ex6.1}
\Hat DN_{\cal Q}(\ou, \oy)(w) \subset \cl\big(DN_{\cal Q}(\ou, \oy)(w)-K_{\cal Q}(\ou, \oy)^*\big)=\cl\big(N_{K_{\cal Q}(\ou, \oy)}(w)-K_{\cal Q}(\ou, \oy)^*\big) \quad \mbox{for all}\;\; w\in \R^n,
\end{equation*}
since the opposite inclusion always holds according to Proposition~\ref{prop:gr}.  By  \cite[Corolalary~11.25(b)]{rw}, we can equivalently express the latter  as
\begin{equation}\label{ex6.2}
\Hat DN_{\cal Q}(\ou, \oy)(w) \subset \big(T_{K_{\cal Q}(\ou, \oy)}(w)\cap-K_{\cal Q}(\ou, \oy)\big)^*.
\end{equation}
To prove this inclusion, pick $w\in \dom \Hat D N_{\cal Q}(\ou, \oy)$, $\eta \in \Hat D N_{\cal Q}(\ou, \oy)(w)$ and conclude from \eqref{gr3} that $w\in K_{\cal Q}(\ou, \oy)$. 
To verify \eqref{ex6.2}, we must   show that $\la \eta, q\ra \leq 0$ for any $q \in C(w)$, where $C(w):=T_{K_{\cal Q}(\ou, \oy)}(w)\cap-K_{\cal Q}(\ou, \oy)$.
If $w=0$,  we clearly have 
$$
C(w) = K_{\cal Q}(\ou, \oy)\cap-K_{\cal Q}(\ou, \oy)=({\cal Q}\cap [\oy]^\perp)\cap-({\cal Q}\cap [\oy]^\perp)=\{0\},
$$ 
which proves  \eqref{ex6.2}. If $w\neq 0$, it follows from $\eta \in \Hat D N_{\cal Q}(\ou, \oy)(w)$ that there exist
 sequences $t_k\searrow0$, $w^k\to w$, $y^k \in N_{\cal Q}(w^k)$, and $\Hat y^k\to \oy$ with $\Hat y^k\in -{\cal Q}$ such that $(y^k - \Hat y^k)/t_k \to \eta$.
We split our verification of \eqref{ex6.2} into three cases depending on the position of $\oy$ in ${\cal Q}$:

\begin{itemize}[noitemsep,topsep=0pt]
\item [(i)]   $\oy \in -\inte{\cal Q}$.  In this case, we have $K_{\cal Q}(\ou, \oy) = \{0\}$. 
Since $w\in K_{\cal Q}(\ou, \oy)$, we get $w=0$, which is not possible. 

\item [(ii)]   $\oy\in -(\bd{\cal Q})\setminus\{0\}$. In this case, since $0\neq w\in K_{\cal Q}(\ou, \oy)$, it is not hard to see that 
   $K_{\cal Q}(\ou, \oy) = \{tw|\; t\ge 0\}$.  This leads us to  
   $$
   C(w) =T_{K_{\cal Q}(\ou, \oy)}(w)\cap-K_{\cal Q}(\ou, \oy)= [w] \cap \{tw|\; t\le 0\}= \{tw|\; t\le 0\}.
   $$
On the other hand, it follows from  $y^k \in N_{\cal Q}(w^k)$ and $\Hat y^k\in -{\cal Q}$ that 
\begin{equation*}
\la \frac{ y^k- \Hat y^k}{t_k}, w^k\ra = -\la \frac{\Hat y^k}{t_k}, w^k\ra\geq 0,
\end{equation*}
which in turn results in $\la \eta, w\ra \geq 0$. Pick now  $q\in C(w)$. So, $q=tw$ for some $t\le 0$. Combining these shows $\la \eta, q\ra \leq 0$, which again proves \eqref{ex6.2}. 

\item [(iii)]  $\oy = 0$. In this case,  we have $K_{\cal Q}(\ou, \oy) = {\cal Q}$. Recall that $  w\in K_{\cal Q}(\ou, \oy)$. 
If $w\in \inte{\cal Q}$, it follows from $w^k\to w$ that  $w^k \in\inte{\cal Q}$ for any $k$ sufficiently large and therefore $y^k =0$ for any such $k$. 
Thus, $-\Hat y^k/t_k \to \eta$,  which, together with $\Hat y^k\in -{\cal Q}$, yields $\eta \in {\cal Q}$. 
Note that  for such $w$, we have 
$$
   C(w) =T_{K_{\cal Q}(\ou, \oy)}(w)\cap-K_{\cal Q}(\ou, \oy)= \R^n \cap  -{\cal Q}=-{\cal Q}.
   $$
Thus, we obtain $\eta \in   C(w)^*$, which again proves \eqref{ex6.2}. 
If $ w\in \bd{\cal Q}$,    we  conclude that 
$$
C(w) = T_{K_{\cal Q}(\ou, \oy)}(w)\cap-K_{\cal Q}(\ou, \oy)=T_{\cal Q}(w)\cap- {\cal Q} =\{tw|\; t\le 0\}.
$$ Using the same argument as case (ii) shows that  $\eta \in C(w)^*$ and hence proves \eqref{ex6.2}. 
\end{itemize}
Combining  these cases shows that the inclusion in \eqref{gr1} holds as equality for $f = \delta_{\cal Q}$ when $\ou=0$. 
\end{Example}

\begin{Remark}\label{notdec}
Note  that  CPLQ functions may not be ${\cal C}^2$-decomposable. 
To delineate this, it is important to mention that most functions,   enjoying the composite representation in \eqref{decom}, automatically satisfy the nondegenracy condition in \eqref{ndc}; see \cite[Examples~2.1 and 2.3]{sh03}. 
Assume that  a function $g:\Y\to \oR$ is ${\cal C}^2$-decomposable at $\ou$, that the nondegenracy condition in \eqref{ndc} holds, and that $\oy\in \sub g(\ou)$. 
Employing now Theorem~\ref{dess} and \eqref{ssvt} tells us that the second subderivaive of $g$ at $\ou$ for $\oy$ has a single quadratic piece, namely $ \la \omu, \nabla^2\Xi(\ou)(w, w)\ra$, whenever $\nabla \Xi(\ou)w\in K_{\vt}(\Xi(\ou),\oy)$.
If $g$ were a CPLQ, it would follow from \eqref{pwfor} that $\d^2 g(\ou,\oy)$ may have several different quadratic pieces depending on how many polyhedral convex sets $C_i$ exist in the representation of $g$; see the beginning of Example~\ref{ex:4}
for the definition of $C_i$. That contradicts our earlier observation about $\d^2 g(\ou,\oy)$ and demonstrates that CPLQ functions cannot be ${\cal C}^2$-decomposable in general. 
\end{Remark}

We are now going to establish an error bound for the  KKT system in \eqref{vs} associated with the composite optimization problem \eqref{comp}.  Such a result was perviously established under the 
second-order sufficient condition (SOSC) for different classes of constrained optimization problems in \cite[Theorem~5.9]{ms18}. For a solution $(\ox,\oy)$ to the KKT system \eqref{vs}, the latter result motivates us to consider 
the SOSC
\begin{equation}\label{sosc1}
\begin{cases}
\langle\nabla_{xx}^2L(\bar x,\oy)w,w\rangle+\d^2g (\Phi(\bar x),\oy) (\nabla \Phi(\ox)w ) >0 \quad \mbox{ for all }\; w\in\D\setminus \{0\},\\
\textrm{where }\;\D:= K_\Th(\ox,-\nabla_xL(\ox, \oy))\cap\big\{w\in \X\, \big|\, \nabla\Phi(\ox)w\in K_g(\Phi(\ox),\oy)\big\},
\end{cases}
\end{equation}
and investigate whether our desired error bound can be derived under this condition. Note that the SOSC in \eqref{sosc1} boils down to the classical second-order sufficient condition 
formulated for different classes of constrained optimization problems including NLPs, second-order cone programming, and SDPs; see \cite[Section~6]{mms}
for a detailed discussion about the SOSC. For the composite programming problems  in \eqref{comp} with $g$ therein being CPLQ, one can find recent developments about the SOSC in \cite{HaS21, s20}.

It is well-known that the SOSC alone does not ensure the error bound for the KKT system in \eqref{vs}; see the discussion after \cite[Theorem~5.9]{ms18}. 
What is needed further is calmness of a multiplier mapping associated with  the composite problem in \eqref{comp}, which is defined as the one for $g$ in \eqref{mpg}.
Given  $\ox\in \X$ and $\psi $ taken from \eqref{comp},  the {\em multiplier mapping} $M_{\ox,\psi}\colon\X\times\Y\rightrightarrows\Y$, associated with the canonically perturbed KKT system \eqref{vs}, is defined by
\begin{equation}\label{mulmap}
M_{\ox, \psi}(v,w):=\big\{y\in\Y \;\big|\;v\in\nabla_x L(\ox,y)+N_\Th(\ox),\;y\in \partial g\big(\Phi(\ox)+w\big)\big\},\;\;(v,w)\in\X \times\Y.
\end{equation}
It is easy to see that if $\ox$ is a local minimum of the composite problem in \eqref{comp},  $M_{\ox,\psi}(0,0)$ reduces to the set of Lagrange multipliers
associated with $\ox$. Note also that for any $\oy\in M_{\ox,\psi}(0,0)$, the pair $(\ox,\oy)$ is a solution to the KKT system in \eqref{vs}.  While, throughout this paper,
we always assume that a Lagrange multiplier exists, it is well-known in general  that the existence of such a Lagrange multiplier requires a {\em constraint qualification}. 
Getting into the question of which constraint qualification to choose is unnecessary for our developments in this paper, however. 

Given a solution $(\ox,\oy)$ to the KKT system \eqref{vs}, the calmness of the multiplier mapping $M_{\ox, \psi}$ in \eqref{mulmap}  at $(0,0)$ for $\oy$ amounts to the existence of  a positive constant $\tau$ 
and neighborhoods $U$ of $(0,0)$ and $V$ of $\oy$ such that
\begin{equation*}
M_{\ox, \psi}(v,w)\cap V\subset M_{\ox, \psi}(0,0)+\tau\Vert (w,v)\Vert \B\quad \mbox{for all}\;\;(v,w)\in U.
\end{equation*}
It follows from \cite[Theorem~3H.3]{dr} that this property is equivalent to the metric subregualrity of its inverse mapping $M_{\ox, \psi}^{-1}$  at $\oy$ for $(0,0)$, which means that 
there exist  a positive constant $\ell$ and a  neighborhood   $V$ of $\oy$  for which the estimate
\begin{equation}\label{calmm}
\dist\big(y,M_{\ox, \psi}(0,0)\big)\le\ell\,\big(\dist(-\nabla_x L(\ox,y), N_\Th(\ox))+\dist(\Phi(\ox),(\partial g)^{-1}(y))\big)\quad \mbox{for all}\; y\in V
\end{equation}
holds. Below, we record some conditions under which the clamness of the multiplier mapping $M_{\ox, \psi}$ is satisfied.

\begin{Proposition} \label{cmpr} Assume that $(\ox,\oy)$ is a solution to the KKT system \eqref{vs}. Then 
the multiplier mapping $M_{\ox, \psi}$ is calm at $(0,0)$ for $\oy$ if one of the following conditions holds:
\begin{itemize}[noitemsep,topsep=0pt]
\item [\rm (a)] the convex function $g$ in \eqref{comp} is CPLQ;
\item [\rm (b)] the subgradient  mapping $\sub g$ is calm at $\Phi(\ox)$ for $\oy$ and there exists $\tilde y\in M_{\ox, \psi}(0,0)$ such that 
$-\nabla_x L(\ox,\tilde y) \in  \ri N_\Th(\ox)$ and $\tilde y\in \ri \sub g(\Phi(\ox))$.
\end{itemize}
\end{Proposition}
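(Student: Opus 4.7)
My plan is to reduce calmness of $M_{\ox,\psi}$ at $(0,0)$ for $\oy$ to a local error-bound inequality. By \cite[Theorem~3H.3]{dr} the sought calmness is equivalent to metric subregularity of the inverse $M_{\ox,\psi}^{-1}$ at $\oy$ for $(0,0)$, and the key observation that makes this workable is the Cartesian product structure
\[
M_{\ox,\psi}^{-1}(y) = \bigl(\nabla_x L(\ox,y)+N_\Th(\ox)\bigr)\times\bigl((\sub g)^{-1}(y)-\Phi(\ox)\bigr).
\]
Combined with the product norm on $\X\times\Y$, this makes $\dist\bigl((0,0),M_{\ox,\psi}^{-1}(y)\bigr)$ comparable up to a factor $\sqrt{2}$ to the right-hand side of \eqref{calmm}. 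Writing $C:=\{y\in\Y\mid -\nabla_x L(\ox,y)\in N_\Th(\ox)\}$ so that $M_{\ox,\psi}(0,0)=C\cap\sub g(\Phi(\ox))$, the task reduces to establishing
\[
\dist\bigl(y,\,C\cap\sub g(\Phi(\ox))\bigr)\le \ell\bigl(\dist(-\nabla_x L(\ox,y),N_\Th(\ox))+\dist(\Phi(\ox),(\sub g)^{-1}(y))\bigr)
\]
for $y$ in a neighborhood of $\oy$.

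For part (a), I would invoke Robinson's polyhedral multifunction theorem. Since $\Th$ is polyhedral, $N_\Th(\ox)$ is a fixed polyhedral cone and the condition $v\in\nabla_x L(\ox,y)+N_\Th(\ox)$ cuts out a single polyhedral convex set in $(v,y)$. When $g$ is CPLQ, $\gph \sub g$ is a finite union of polyhedral convex sets, so the condition $y\in\sub g(\Phi(\ox)+w)$ carves out a finite union of polyhedral convex sets in $(w,y)$. Consequently $\gph M_{\ox,\psi}$ is itself a finite union of polyhedral convex sets, and Robinson's theorem yields that $M_{\ox,\psi}$ is upper Lipschitzian at every point of its domain, which in particular gives calmness at $(0,0)$ for $\oy$.

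For part (b), I would assemble the error bound from three ingredients. First, since $C$ is the preimage of the polyhedral cone $N_\Th(\ox)$ under an affine map, Hoffman's lemma supplies a constant $\kappa_1>0$ with $\dist(y,C)\le\kappa_1\,\dist(-\nabla_x L(\ox,y),N_\Th(\ox))$ for every $y\in\Y$. Second, calmness of $\sub g$ at $\Phi(\ox)$ for $\oy$, via \cite[Theorem~3H.3]{dr}, is equivalent to metric subregularity of $(\sub g)^{-1}$ at $\oy$ for $\Phi(\ox)$, yielding $\dist(y,\sub g(\Phi(\ox)))\le\kappa_2\,\dist(\Phi(\ox),(\sub g)^{-1}(y))$ for $y$ near $\oy$. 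Third, bounded linear regularity of the pair $\{C,\sub g(\Phi(\ox))\}$ at the intersection follows from Bauschke--Borwein's theorem: $C$ is polyhedral and the assumption in (b) provides $\tilde y\in\ri\sub g(\Phi(\ox))\cap C$, which gives $\dist(y,C\cap\sub g(\Phi(\ox)))\le\kappa_3\bigl(\dist(y,C)+\dist(y,\sub g(\Phi(\ox)))\bigr)$ locally. Chaining these three inequalities produces the required error bound.

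The principal obstacle I foresee is the third ingredient of part (b): establishing bounded linear regularity of $\{C,\sub g(\Phi(\ox))\}$. This rests on a nontrivial convex-analytic theorem whose hypotheses must be matched carefully, and polyhedrality of $C$ is essential and cannot be dispensed with. The condition $\tilde y\in\ri\sub g(\Phi(\ox))$ directly supplies the required relative-interior intersection point, but the second hypothesis $-\nabla_x L(\ox,\tilde y)\in\ri N_\Th(\ox)$ is less transparent in this outline; I expect it enters either to ensure $\tilde y\in\ri C$ and thereby secure a symmetric form of the regularity, or to control the configuration of $\oy\in M_{\ox,\psi}(0,0)$ that may lie on the boundary of $\sub g(\Phi(\ox))$, where uniform propagation of the calmness constants across the multiplier set becomes delicate.
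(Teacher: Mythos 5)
Your proposal is correct, and for part (b) it follows essentially the same route as the paper: the paper writes $M_{\ox,\psi}(0,0)=\Omega\cap\partial g(\Phi(\ox))$ with $\Omega$ equal to your set $C$, applies Hoffman's lemma to bound $\dist(y,\Omega)$, uses metric subregularity of $(\partial g)^{-1}$ (equivalent to the assumed calmness of $\partial g$ via \cite[Theorem~3H.3]{dr}) to bound $\dist(y,\partial g(\Phi(\ox)))$, and invokes \cite[Corollary~3]{bbl} for linear regularity of the pair, exactly your three-ingredient chain. Your hedge about the role of $-\nabla_x L(\ox,\tilde y)\in\ri N_\Th(\ox)$ resolves in favor of your first guess: since $\Omega$ is the affine preimage of $N_\Th(\ox)$, this hypothesis places $\tilde y$ in $\ri\Omega$, so that $\ri\Omega\cap\ri\partial g(\Phi(\ox))\neq\emptyset$ is what feeds the Bauschke--Borwein--Li result. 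Where you genuinely diverge is part (a): you apply Robinson's polyhedral multifunction theorem directly to $\gph M_{\ox,\psi}$, which is indeed a finite union of polyhedral convex sets when $g$ is CPLQ, obtaining upper Lipschitz continuity (hence calmness) in one stroke; the paper instead runs part (a) through the same three-step decomposition as part (b), replacing the relative-interior argument by the error bound for intersections of polyhedral convex sets from \cite[Theorem~8.35]{io} and deriving calmness of $\partial g$ itself from Robinson's theorem. Your route for (a) is shorter and yields the slightly stronger upper Lipschitz property; the paper's buys a uniform treatment of both cases through a single chain of estimates.
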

\begin{proof} Observe first that the  set $M_{\ox, \psi}(0,0)$  can be equivalently written as 
 $$
M_{\ox, \psi}(0,0)=\Omega\cap \sub g(\Phi(\ox))\quad \mbox{with}\quad \Omega:=\big\{y\in\Y\;\big|\; 0\in \nabla_x L(\ox,y)+N_\Th(\ox)\big\}.
 $$
 If  (a) holds,   $\sub g(\Phi(\ox))$ is a polyhedral convex set. By assumption, $\Omega$ is a polyhedral convex set as well.  It follows then from \cite[Theorem~8.35]{io}
that there is a constant $\rho\ge 0$ such that  for any $y\in Y$ the estimate 
$$
\dist(y, M_{\ox, \psi}(0,0))\le \rho\big(\dist(y,\Omega)+ \dist(y, \sub g(\Phi(\ox))\big)
$$
holds. If (b) holds, the same estimate holds for any $y$ sufficiently close to $\oy$ due to the existence of $\tilde y$, which can be equivalently translated as $\ri \Omega\cap \ri  \sub g(\Phi(\ox))\neq \emptyset$; see \cite[Corollary 3]{bbl}. 
Using the classical Hoffman lemma (cf. \cite[Lemma~3C.4]{dr}) gives us a constant $\rho'\ge 0$ such that the estimate 
$$
\dist(y,\Omega) \le \rho' \dist\big(-\nabla_xL(\ox,y),N_\Th(\ox)\big)
$$
is satisfied for any $y\in \Y$. If $g$ is CPLQ, it follows from the proof of \cite[Theorem~11.14(b)]{rw} that $\gph\partial g$ is a union of finitely many  polyhedral convex  sets. 
So, we conclude from \cite[Proposition~1]{rob1} (see also \cite[Theorem~3D.1]{dr}) that the subgradient  mapping $\sub g$ is calm at $\Phi(\ox)$ for $\oy$. 
Thus, if either (a) or (b) holds, $(\sub g)^{-1}$ is metrically subregular at $\oy$ for $\Phi(\ox)$ (cf. \cite[Theorem~3H.3]{dr}), meaning that  there are a positive constant $\rho''$ and a  neighborhood   $V$ of $\oy$  for which the estimate
 $$
  \dist(y, \sub g(\Phi(\ox))\big) \le \rho''\, \dist(\Phi(\ox), (\sub g)^{-1}(y)\big)
 $$
holds for any $y\in V$. Combining all these estimates proves the calmness of the multiplier mapping $M_{\ox, \psi}$   at $(0,0)$ for $\oy$. 
\end{proof}

It is worth mentioning that the existence of $\tilde y$  in Proposition~\ref{cmpr}(b) boils down to the classical {\em strict complementarity} assumption in case $\Th=\X$ in the composite problem in \eqref{comp}, 
which reads as the existence of a multiplier $\tilde y$ such that $\nabla_x L(\ox,\tilde y) =0$ and $\tilde y\in \ri \sub g(\Phi(\ox))$. Furthermore, 
according to \cite[Theorem~3.3]{ag}, the calmness of the   subgradient  mapping $\sub g$ in Proposition~\ref{cmpr}(b) is equivalent to the conjugate function $g^*$
satisfying the quadratic growth condition 
$$
g^*(y)\ge g^*(\oy)+\la \Phi(\ox), y-\oy\ra+\kappa\,\dist\big(y,\sub g( \Phi(\ox))\big)\quad \mbox{whenever}\;\;y\in V,
$$
where $\kappa$ is a positive constant and $V$ is a neighborhood of $\oy$. It is known that this calmness condition holds for the subgradient mapping of CPLQ functions (cf. \cite[Proposition~1]{rob1}) as well as the   ${\cal C}^2$-decomposable  functions
under the condition \eqref{bcq} (cf. Propsoition~\ref{sulin}(e)).

To proceed, define  the {\em residual}  function   $r\colon\X\times\Y\to\R$ of the KKT system \eqref{vs} by
\begin{equation}\label{res}
r(x,y):=\dist(-\nabla_x L(x,y), N_\Th(x))+\Vert\Phi(x)-{\rm prox}_g\big(\Phi(x)+y\big)\Vert,\;\;(x,y)\in\X \times\Y. 
\end{equation}
It is easy to see that a pair $(x,y)$ is a solution to the KKT system in \eqref{vs} if and only if $r(x,y)=0$.
The residual function $r$, indeed, measures the violation of the KKT system in \eqref{vs} for a given pair $(x,y)$ and plays an important role as a surrogate  for  
the tolerance parameter $\ek$ in \eqref{xkk}.  

\begin{Theorem}\label{error} 
Assume that  $(\ox,\oy)$ is a solution to the KKT system \eqref{vs} for which the SOSC in \eqref{sosc1}
holds. Assume further that   one of the following conditions holds:
\begin{itemize}[noitemsep,topsep=0pt]
\item [\rm (a)]  the convex function $g$ in \eqref{comp} is ${\cal C}^2$-decomposable and the nondegeneracy condition in \eqref{ndc} is satisfied for $\ou=\Phi(\ox)$;
\item [\rm (b)] the convex function $g$ in \eqref{comp}  is twice epi-differentiable at $\Phi(\ox)$ for $\oy$ and the equality in \eqref{ex5.2} holds for $\ou=\Phi(\ox)$ and $\oy$.
\end{itemize}
If the multiplier mapping $M_{\ox,\psi}$ in \eqref{mulmap} is calm at $(0,0)\in \X\times \Y$ for $\oy$, then there exist constants $\gamma>0$ and $\kappa\ge 0$ such that
\begin{equation}\label{eb}
\Vert x-\ox\Vert+\dist\big(y,M_{\ox,\psi}(0,0)\big)\le\kappa\,r(x,y)\quad\mbox{ for all }\;\;(x,y)\in\B_{\gamma}(\ox,\oy).
\end{equation}
\end{Theorem}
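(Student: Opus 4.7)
My approach is a contradiction argument preceded by the observation that it suffices to establish the partial bound $\|x-\ox\|\le\kappa_1\,r(x,y)$ for $(x,y)$ near $(\ox,\oy)$. Granted such a bound, the smoothness of $\nabla_x L$, nonexpansivity of ${\rm prox}_g$, and the polyhedrality of $\Th$ (which gives $N_\Th(x)\subseteq N_\Th(\ox)$ for $x$ close to $\ox$) together imply $r(\ox,y)\le r(x,y)+C\|x-\ox\|$. Combining with the calmness estimate \eqref{calmm} evaluated at $(\ox,y)$ then yields $\dist(y,M_{\ox,\psi}(0,0))\le \ell\,r(\ox,y)\le\ell\big(1+C\kappa_1\big)r(x,y)$, which together with the partial bound establishes \eqref{eb}.

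Suppose the partial bound fails: there exist $(x^k,y^k)\to(\ox,\oy)$ with $x^k\in\Th$ (otherwise $r(x^k,y^k)=+\infty$) and $\|x^k-\ox\|>k\,r(x^k,y^k)$. Set $t_k:=\|x^k-\ox\|$, $w^k:=(x^k-\ox)/t_k$, and extract $w^k\to w$ with $\|w\|=1$. Let $\ty^k$ project $y^k$ onto the closed set $M_{\ox,\psi}(0,0)$; the calmness of $M_{\ox,\psi}$ combined with the comparison above gives $\|y^k-\ty^k\|=O(t_k)$, so $\ty^k\to\oy$ and $\eta^k:=(y^k-\ty^k)/t_k\to\eta$ along a further subsequence. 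The stationarity portion of $r(x^k,y^k)=o(t_k)$ yields $z^k\in N_\Th(x^k)$ with $z^k+\nabla_x L(x^k,y^k)=o(t_k)$; setting $\bar z^k:=-\nabla_x L(\ox,\ty^k)\in N_\Th(\ox)$, a Taylor expansion produces $(z^k-\bar z^k)/t_k\to-\nabla^2_{xx}L(\ox,\oy)w-\nabla\Phi(\ox)^*\eta$, which by Definition~\ref{ssgd} lies in $\Hat D N_\Th(\ox,-\nabla_x L(\ox,\oy))(w)$. Polyhedrality of $\Th$ together with Example~\ref{ex:4} identifies this set with $N_{K_\Th(\ox,-\nabla_x L(\ox,\oy))}(w)-K_\Th(\ox,-\nabla_x L(\ox,\oy))^*$, and \eqref{gr3} forces $w\in K_\Th(\ox,-\nabla_x L(\ox,\oy))$. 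The feasibility portion gives $u^k:={\rm prox}_g(\Phi(x^k)+y^k)$ with $u^k-\Phi(x^k)=o(t_k)$, so $v^k:=\Phi(x^k)+y^k-u^k\in\partial g(u^k)$ obeys $v^k=y^k+o(t_k)$ and $u^k=\Phi(\ox)+t_k\nabla\Phi(\ox)w+o(t_k)$; pairing $v^k\in\partial g(u^k)$ with $\ty^k\in\partial g(\Phi(\ox))$ in Definition~\ref{ssgd} gives $\eta\in\Hat D(\partial g)(\Phi(\ox),\oy)(\nabla\Phi(\ox)w)$. Hypothesis (a) via Proposition~\ref{ex:5}, or hypothesis (b) directly, places $\eta\in\cl\big(D(\partial g)(\Phi(\ox),\oy)(\nabla\Phi(\ox)w)-K_g(\Phi(\ox),\oy)^*\big)$, and \eqref{gr3} yields $\nabla\Phi(\ox)w\in K_g(\Phi(\ox),\oy)$. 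Hence $w\in\D\setminus\{0\}$.

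To contradict \eqref{sosc1}, decompose $-\nabla^2_{xx}L(\ox,\oy)w-\nabla\Phi(\ox)^*\eta=s-p$ with $s\in N_{K_\Th(\ox,-\nabla_x L(\ox,\oy))}(w)$ and $p\in K_\Th(\ox,-\nabla_x L(\ox,\oy))^*$; testing against $w\in K_\Th(\ox,-\nabla_x L(\ox,\oy))$ gives $\la s,w\ra=0$ and $\la p,w\ra\le 0$, hence $\la\nabla^2_{xx}L(\ox,\oy)w,w\ra+\la\eta,\nabla\Phi(\ox)w\ra\le 0$. Next, approximate $\eta$ by $\zeta^j-\beta^j$ with $\zeta^j\in D(\partial g)(\Phi(\ox),\oy)(\nabla\Phi(\ox)w)$ and $\beta^j\in K_g(\Phi(\ox),\oy)^*$. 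Under (a), Proposition~\ref{sulin}(a) delivers subdifferential regularity of $g$ at $\Phi(\ox)$ and Theorem~\ref{dess}(b) yields twice epi-differentiability, while (b) supplies it directly; this produces the identification $D(\partial g)(\Phi(\ox),\oy)=\sub\big(\tfrac12\d^2 g(\Phi(\ox),\oy)\big)$, and Euler's identity for the positively $2$-homogeneous convex function $\tfrac12\d^2 g(\Phi(\ox),\oy)$ gives $\la\zeta^j,\nabla\Phi(\ox)w\ra=\d^2 g(\Phi(\ox),\oy)(\nabla\Phi(\ox)w)$, while $\la\beta^j,\nabla\Phi(\ox)w\ra\le 0$. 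Passing to the limit yields $\la\eta,\nabla\Phi(\ox)w\ra\ge\d^2 g(\Phi(\ox),\oy)(\nabla\Phi(\ox)w)$, and substituting produces $\la\nabla^2_{xx}L(\ox,\oy)w,w\ra+\d^2 g(\Phi(\ox),\oy)(\nabla\Phi(\ox)w)\le 0$ at the nonzero $w\in\D$, contradicting \eqref{sosc1}.

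The main obstacle I anticipate is the careful bookkeeping of the closure in \eqref{ex5.2}: the semi-strict limit $\eta$ is only approximable by differences $\zeta^j-\beta^j$, so Euler's identity and the polarity pairing $\la\beta^j,\nabla\Phi(\ox)w\ra\le 0$ must be applied to the approximants and the resulting inequality must then be pushed through the limit; this in turn requires a clean justification of the identification $D(\partial g)=\sub(\tfrac12\d^2 g)$ together with finiteness of $\d^2 g(\Phi(\ox),\oy)$ on $K_g(\Phi(\ox),\oy)$. The polyhedral comparison of normal cones and the Lipschitz passage from $r(x,y)$ to $r(\ox,y)$ are routine but must be executed cleanly for the reduction step to go through.
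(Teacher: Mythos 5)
Your overall strategy -- reduce to the partial bound $\|x-\ox\|=O(r(x,y))$, argue by contradiction, extract the directions $w$ and $\eta$, and play the resulting inequality $\la\nabla^2_{xx}L(\ox,\oy)w,w\ra+\la\eta,\nabla\Phi(\ox)w\ra\le 0$ against a lower bound $\la\eta,\nabla\Phi(\ox)w\ra\ge\d^2g(\Phi(\ox),\oy)(\nabla\Phi(\ox)w)$ to contradict \eqref{sosc1} -- is exactly the paper's, and your treatment of case (b), of the $\Th$-part (whether via the reduction lemma or via $\Hat DN_\Th$ and Example~\ref{ex:4}, which for a polyhedral $\Th$ amount to the same thing), and of the final passage from the partial bound to the multiplier estimate are all sound.

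There is, however, a genuine gap in your treatment of case (a). You invoke Proposition~\ref{ex:5} to conclude that hypothesis (a) yields the equality \eqref{ex5.2}, and then run the case-(b) argument. But Proposition~\ref{ex:5} has \emph{three} hypotheses: ${\cal C}^2$-decomposability, the nondegeneracy condition \eqref{ndc}, \emph{and} the property \eqref{ex5.1} for the outer function $\vartheta$ of the decomposition. Hypothesis (a) of the theorem supplies only the first two; it does not assume \eqref{ex5.1}, and whether \eqref{ex5.2} holds for general ${\cal C}^2$-decomposable functions under nondegeneracy is explicitly left open in the discussion following the theorem (this is precisely why (a) and (b) are stated as separate, non-nested alternatives). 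Consequently your proof only covers those ${\cal C}^2$-decomposable $g$ whose outer function happens to satisfy \eqref{ex5.1} (e.g.\ polyhedral or CPLQ $\vartheta$), not the full generality of (a). The paper closes this gap by a different, direct argument under (a): it lifts $\Hat y^k$ and $y^k+\bek$ to multipliers $\Hat\mu^k\in\partial\vartheta(\Xi(\ou))$ and $\mu^k\in\partial\vartheta(\Xi(u^k))$ via the subdifferential chain rule, shows $(\mu^k-\Hat\mu^k)/t_k$ is bounded using the nondegeneracy condition, and then obtains the key inequality \eqref{eb10} from the monotonicity of $\partial\vartheta$ together with the convergence $\mu^k\to\omu$ (Proposition~\ref{calag}(d)) and the second-subderivative formula of Theorem~\ref{dess}(a). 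You would need to supply an argument of this type (or an independent proof that \eqref{ndc} forces \eqref{ex5.1} for $\vartheta$) for case (a) to be complete.

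A minor point: in deriving $\|y^k-\ty^k\|=O(t_k)$ you apply the calmness estimate \eqref{calmm} at $y^k$ after bounding $r(\ox,y^k)$, but \eqref{calmm} involves $\dist(\Phi(\ox),(\partial g)^{-1}(y))$ rather than the prox-residual; one must first shift $y^k$ by the quantity $\bek=\Phi(\xk)-{\rm prox}_g(\Phi(\xk)+y^k)=o(t_k)$ so that the shifted multiplier lies in $(\partial g)^{-1}$ of a computable point, exactly as the paper does. This is routine but should be made explicit.
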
 
\begin{proof}
We begin by showing that  that there exists a neighborhood $U$ of $(\ox,\oy)$ such that 
\begin{equation}\label{ebb}
\Vert x-\ox\Vert=O\big(r(x,y)\big)\quad\textrm{for all}\quad(x,y)\in U.
\end{equation}
If this claim fails, we find a sequence $(\xk,y^k)\to(\ox,\oy)$  such that 
\begin{equation*}
t_k:= \Vert\xk-\ox\Vert>k\,r(\xk,y^k)\ge 0\quad\mbox{for all}\;\,k\in \N,
\end{equation*}
meaning that $r(\xk,y^k)=o(t_k )$. Thus,  by \eqref{res}, we find $\alk\in \X$ and $\bek\in \Y$ such that $ \alk = o(t_k )$ and  $\bek=o(t_k )$, where 
\begin{equation}\label{eb3}
-\nabla_x L(\xk,y^k)+\alk\in N_\Th(\xk)\quad\textrm{and }\quad\bek=\Phi\big(\xk)-{\rm prox}_g (\Phi(\xk)+y^k\big).
\end{equation}
It then follows from the first identity in \eqref{prm} that 
\begin{equation}\label{eb2}
y^k +\bek \in \partial g(\Phi(\xk)-\bek).
\end{equation}
 The calmness of multiplier mapping $M_{\ox,\psi}$  at $(0,0)$ for $\oy$ implies that there exist 
  a positive constant $\ell$ and a  neighborhood   $V$ of $\oy$  for which the estimate in \eqref{calmm}
holds. This allows us to conclude for any $k$ sufficiently large that 
\begin{equation*}
\dist\big(y^k+\bek,M_{\ox,\psi}(0,0)\big)\le\ell\big(\dist(-\nabla_x L(\ox,y^k+\bek), N_\Th(\ox))+\dist(\Phi(\ox),(\partial g)^{-1}(y^k+\bek))\big).
\end{equation*}
  Recall that $\Th$ is a polyhedral set and that $\xk \to \ox$ as $k\to \infty$. We then deduce that $N_\Th(\xk) \subset N_\Th(\ox)$ for $k$ sufficiently large and get the estimates
\begin{align*}
\dist(-\nabla_x L(\ox,y^k+\bek), N_\Th(\ox)) \le&\, \dist(-\nabla_x L(\ox,y^k+\bek), N_\Th(\xk))\nonumber\\
\leq&\, \|-\nabla_x L(\ox,y^k+\bek)- (-\nabla_xL(\xk, y^k)+\alk)\|\nonumber\\
\leq&\,  \Vert\nabla \varphi(\xk)-\nabla \varphi(\ox)\Vert+\Vert(\nabla\Phi(\xk)-\nabla\Phi(\ox))^*y^k\Vert \\
&+\Vert \nabla\Phi(\ox)^*\bek\Vert+\|\alk\|=O(t_k ),
\end{align*}
where the last equality comes from the Lipschitz continuity of   mappings $\nabla \varphi$ and $\nabla\Phi$ around $\ox$ and the estimates in \eqref{eb3}.
Moreover, by \eqref{eb2}, we have   $\Phi(\xk)-\bek\in (\partial g)^{-1}(\lmk+\bek)$, which coupled with the Lipschitz continuity of $\Phi$ around $\ox$ and  $\bek=o(t_k )$ leads us to 
\begin{equation*}
\dist\big(\Phi(\ox),(\partial g )^{-1}(\lmk+\bek)\big)\le\Vert\Phi(\xk)-\bek-\Phi(\ox)\Vert=O(t_k )
\end{equation*}
 for any $k$ sufficiently large. Combining these estimates demonstrates that 
 $$
 \dist\big(y^k+\bek,M_{\ox,\psi}(0,0)\big)=O(t_k)
 $$
 for any $k$ sufficiently large.  Since $M_{\ox,\psi}(0,0)$ is a closed convex set, set $\Hat y^k:=P_{M_{\ox,\psi}(0,0)}(y^k)$. This, the above estimate, and $\bek=o(t_k )$ allow us  to 
 obtain $y^k-\Hat y^k=O(t_k )$. Passing to subsequences if necessary, we can assume that   
\begin{equation}\label{eb4}
\dfrac{\xk-\ox}{t_k }\to w\ne 0\quad\textrm{and}\quad\dfrac{y^k-\Hat y^k}{t_k }\to\eta\;\mbox{ as }\;k\to\infty.
\end{equation}
Recall from \eqref{vs} that $- \nabla_xL(\ox, \oy)\in N_\Th(\ox)$. This, coupled with the first inclusion in  \eqref{eb3} and the 
reduction lemma for polyhedral convex sets (cf. \cite[Lemma~2E.4]{dr}), allows us to conclude for any $k$ sufficiently large that
\begin{equation*}
 - \big(\nabla_xL(\xk, y^k) - \nabla_xL(\ox, \oy)\big) + \alk \in N_{K_\Th(\ox, -\nabla_xL(\ox, \oy))}(\xk - \ox).
\end{equation*}
By the definition of $\Hat y^k$, we have $- \nabla_xL(\ox, \Hat y^k) \in N_\Th(\ox)$. Applying again the reduction lemma from \cite[Lemma~2E.4]{dr} brings us to 
\begin{equation*}
 - \big(\nabla_xL(\ox, \Hat y^k) - \nabla_xL(\ox, \oy)\big) \in N_{K_\Th(\ox, -\nabla_xL(\ox, \oy))}(0) = K_\Th(\ox, -\nabla_xL(\ox, \oy))^*
\end{equation*}
for any $k$ sufficiently large. On the other hand, we have
\begin{align*}
\nabla_xL(\xk, y^k)- \nabla_xL(\ox, \oy) &=\nabla_xL(\xk, \oy) - \nabla_xL(\ox,\oy) + \nabla\Phi(x^k)^*(y^k-\oy)\\
 &=\nabla^2_{xx}L(\ox,\oy)(\xk-\ox)+\nabla\Phi(\xk)^*(y^k-\Hat y^k)+ \nabla\Phi(\ox)^*(\Hat y^k - \oy)+o(t_k),
\end{align*}
and 
$$
\nabla_xL(\ox, \Hat y^k) - \nabla_xL(\ox, \oy)=  \nabla\Phi(\ox)^*(\Hat y^k - \oy).
$$
Combining these and remembering that $\alk=o(t_k )$ lead us  to the inclusion
\begin{align*}
\nabla^2_{xx}L(\ox,\oy)(\frac{\xk-\ox}{t_k})&+\nabla\Phi(\xk)^*(\frac{y^k-\Hat y^k}{t_k})+\frac{o(t_k)}{t_k}\\
&\in K_\Th(\ox, -\nabla_xL(\ox, \olm))^*-N_{K_\Th(\ox, -\nabla_xL(\ox, \oy))} (\frac{\xk - \ox}{t_k} )\\
&\subset K_\Th(\ox, -\nabla_xL(\ox, \olm))^* -N_{K_\Th(\ox, -\nabla_xL(\ox, \oy))}(w)
\end{align*}
for any $k$ sufficiently large, where the last inclusion results from the first limit in \eqref{eb4} and  $K_\Th(\ox, -\nabla_xL(\ox, \olm))$ being a polyhedral convex set. 
The right-hand side of this inclusion is the sum of two polyhedral convex sets, which is again a polyhedral convex set and hence  is  closed. Passing to the limit  brings us to 
\begin{equation*}\label{eb9}
\nabla^2_{xx}L(\ox,\oy)w+\nabla\Phi(\ox)^*\eta \in K_\Th(\ox, -\nabla_xL(\ox, \oy))^* - N_{K_\Th(\ox, -\nabla_xL(\ox, \oy))}(w).
\end{equation*}
This implies that $w\in K_\Th(\ox, -\nabla_xL(\ox, \oy))$ and  that 
there are $\nu_1\in K_\Th(\ox, -\nabla_xL(\ox, \oy))^*$ and $\nu_2 \in N_{K_\Th(\ox, -\nabla_xL(\ox, \oy))}(w) = K_\Th(\ox, -\nabla_xL(\ox, \oy))^*\cap[w]^\bot$ such that 
$\nabla^2_{xx}L(\ox,\oy)w+\nabla\Phi(\ox)^*\eta = \nu_1 - \nu_2$, which yields 
\begin{equation}\label{eb14.5}
\la\nabla^2_{xx}L(\ox,\oy)w, w\ra+\la \eta, \nabla\Phi(\ox)w\ra = \la\nu_1, w\ra - \la \nu_2, w\ra \leq 0.
\end{equation}
We claim now $\nabla\Phi(\ox)w \in K_g(\Phi(\ox),\oy)$. 
It follows from the convexity of  $g$, $\oy \in \partial g(\Phi(\ox))$ and \cite[Theorem~8.30]{rw}  that  $\la\oy,\nabla\Phi(\ox)w\ra\leq \d g(\Phi(\ox))(\nabla\Phi(\ox)w)$. 
To prove the opposite inequality, we deduce from \eqref{eb2} that
\begin{align*}
\la y^k+\bek,\Phi(\xk) -\Phi(\ox)-\bek\ra&\geq g(\Phi(\xk) - \bek) - g(\Phi(\ox))\nonumber\\
 &= g\big(\Phi(\ox) + t_k\frac{\Phi(\xk) -\Phi(\ox)-\bek}{t_k}\big)-g(\Phi(\ox)).
\end{align*}
Dividing both sides by $t_k $ and  passing then to the limit result in   $\la\oy,\nabla\Phi(\ox)w\ra\ge \d g(\Phi(\ox))(\nabla\Phi(\ox)w)$. Thus we get   $\la\oy,\nabla\Phi(\ox)w\ra= \d g(\Phi(\ox))(\nabla\Phi(\ox)w)$, which proves our claim.
To arrive at a contradiction with the SOSC in \eqref{sosc1}, we need to   prove that the second subderivative of $g$ satisfies   
\begin{equation}\label{eb10}
\langle\eta,\nabla\Phi\big(\ox)w\rangle\ge\d^2 g(\Phi(\ox),\oy)(\nabla\Phi(\ox)w).
\end{equation}
Assume first that    (b) holds, meaning that $g$ satisfies in \eqref{ex5.2}. By \eqref{eb4} and $\bek=o(t_k )$, we have 
$$
\big(\frac{\Phi(\xk)-\bek-\Phi(\ox)}{t_k}, \frac{y^k +\bek- \Hat y^k}{t_k}\big)\to (\nabla \Phi(\ox)w, \eta).
$$
This, together with \eqref{eb2}, the fact $\Hat y^k\in  \partial g(\Phi(\ox))$ and Definition~\ref{ssgd}, tells us that $\eta\in \Hat D(\partial g)(\Phi(\ox), \oy)(\nabla \Phi(\ox)w)$.
By \eqref{ex5.2}, we find  sequences $\{\eta_1^k\b \subset D(\partial g)(\Phi(\ox), \oy)(\nabla \Phi(\ox)w)$ and $\{\eta_2^k\b \subset K_g(\Phi(\ox), \oy)^*$ for which we have  
$\eta_1^k-\eta_2^k\to \eta$ as $k\to \infty$. It follows from \cite[Theorem~13.40]{rw} and \cite[Lemma~3.6]{chnt} that $\d^2 g(\Phi(\ox),\oy)(\nabla\Phi(\ox)w)=\la \eta_1^k, \nabla\Phi(\ox)w\ra$ and from $\nabla\Phi(\ox)w \in K_g(\Phi(\ox),\oy)$ that 
$\la \eta_2^k, \nabla\Phi(\ox)w\ra \le 0$ for all $k\in \N$. So, we get 
\begin{equation*}
\la \eta_1^k-\eta_2^k, \nabla\Phi(\ox)w\ra \ge \d^2 g(\Phi(\ox),\oy)(\nabla\Phi(\ox)w). 
\end{equation*}
Passing to the limit then proves \eqref{eb10} when (b) holds. Assume now that (a) is satisfied. 
It follows from the nondegeneracy condition in \eqref{ndc} and the inclusions in \eqref{inc} and \eqref{nddu} that the BCQ condition  in \eqref{bcq} holds.
This tells us  that there is a neighborhood $\O$ of $\ou=\Phi(\ox)$ such that for any $u\in \O\cap \dom \vt$, we have 
$$
N_{\dom \vt} (\Xi(u))\cap \ker \nabla \Xi(u)^*=\{0\}.
$$
So, for any $k$ sufficiently large, we conclude from the chain rule for subdifferentials in \cite[Theorem~10.6]{rw}, $\Hat y^k\in  \partial g(\Phi(\ox))$, and \eqref{eb2} that there are   $\Hat \mu^k\in \sub \vt  (\Xi(\ou))$ and $\mu^k\in  \sub \vt  (\Xi (u^k))$ with $u^k=\Phi(\xk)-\bek$ such that
$$
\Hat y^k=\nabla \Xi(\ou)^*\Hat \mu^k\quad \mbox{and}\quad  y^k+\bek=\nabla \Xi(u^k)^*\mu^k.
$$
These allow us to obtain 
$$
\big\la \frac{y^k+\bek- \Hat y^k}{t_k}, \frac{u^k-\ou}{t_k}\big\ra=\big \la \frac{\nabla \Xi(\ou)^*(\mu^k- \Hat \mu^k)}{t_k}, \frac{u^k-\ou}{t_k}\big\ra+\big\la \frac{(\nabla \Xi(u^k)-\nabla \Xi(\ou))^*\mu^k}{t_k}, \frac{u^k-\ou}{t_k}\big\ra.
$$
The first term in the right-hand side of the above equality can be estimated by
\begin{align*}
\big \la \frac{\nabla \Xi(\ou)^*(\mu^k- \Hat \mu^k)}{t_k}, \frac{u^k-\ou}{t_k}\big\ra &= \big \la \frac{\mu^k- \Hat \mu^k}{t_k}, \frac{\nabla \Xi(\ou)(u^k-\ou)}{t_k}\big\ra\\
&= \big\la \frac{\mu^k- \Hat \mu^k}{t_k}, \frac{\Xi(u^k)-\Xi(\ou)}{t_k}\big\ra+ \big\la \frac{\mu^k- \Hat \mu^k}{t_k}, \frac{o(\|u^k-\ou\|)}{t_k}\big\ra\\
&\ge \big\la \frac{\mu^k- \Hat \mu^k}{t_k},\frac{o(t_k)}{t_k}\big\ra,
\end{align*}
where the last inequality results    from the monotonicity of the subgradient mapping $\sub \vt$ and the fact that $u^k-\ou=O(t_k)$. 
Also, we have 
$$
\big\la \frac{(\nabla \Xi(u^k)-\nabla \Xi(\ou))^*\mu^k}{t_k}, \frac{u^k-\ou}{t_k}\big\ra = \big\la \frac{(\nabla^2  \Xi(\ou)(u^k- \ou) +o(t_k))^*\mu^k}{t_k}, \frac{u^k-\ou}{t_k}\big\ra.
$$
Combining these results in the estimate 
\begin{equation}\label{eb08}
\big\la \frac{y^k+\bek- \Hat y^k}{t_k}, \frac{u^k-\ou}{t_k}\big\ra \ge \big\la \frac{\mu^k- \Hat \mu^k}{t_k},\frac{o(t_k)}{t_k}\big\ra+  \big\la \frac{(\nabla^2  \Xi(\ou)(u^k- \ou) +o(t_k))^*\mu^k}{t_k}, \frac{u^k-\ou}{t_k}\big\ra.
\end{equation}
By \eqref{eb4} and $\bek=o(t_k)$, we can conclude that the left-hand side in \eqref{eb08} converges to $\la \eta, \nabla \Phi(\ox)w\ra $. Moreover, a similar argument as the one after \eqref{ex5.3} and the fact that $y^k-\Hat y^k=O(t_k)$
tell us that $\mu^k- \Hat \mu^k=O(t_k)$. Thus, the first term on the right-hand side in \eqref{eb08} converges to $0$. Finally, the nondegeneracy condition in \eqref{ndc} and the inclusion in \eqref{inc} implies that 
the dual condition in \eqref{duq} is satisfied. By Theorem~\ref{calag}(d), we obtain that the sequence $\{\mu^k\b$   converges to $\omu$ as $k\to\infty$, where $\omu$ is the unique element in $M_{\ou,g}(\oy,0)$ with $M_{\ou,g}(\oy,0)$ taken from \eqref{mpg};
see Theorem~\ref{calag}.
Passing to the limit in \eqref{eb08} then shows that 
$$
\la \eta, \nabla \Phi(\ox)w\ra \ge  \big\la \bar\mu, \nabla^2  \Xi(\ou)(\nabla \Phi(\ox)w, \nabla \Phi(\ox)w)\big\ra.
$$
Recall that $\nabla\Phi(\ox)w \in K_g(\Phi(\ox),\oy)$, which together with \eqref{critc} brings us to $\nabla\Xi(\ou)(\nabla\Phi(\ox)w) \in K_\vt(\Xi(\ou),\omu)$.  
Employing now the established formula for the second subderivative of $g$ in Theorem~\ref{calag}(a) and using \eqref{ssvt} result in 
$$
 \big\la \bar\mu, \nabla^2  \Xi(\ou)(\nabla \Phi(\ox)w, \nabla \Phi(\ox)w)\big\ra=   \d^2 g(\ou, \oy) (\nabla \Phi(\ox)w).
$$   
Combining these two estimates proves \eqref{eb10} when (a) holds. 
By \eqref{eb10} and \eqref{eb14.5}, we obtain 
\begin{equation*}
\langle\nabla^2_{xx} L(\ox,\oy)w, w\rangle+\d^2g(\ou,\oy)(\nabla\Phi(\ox)w)\leq \langle\nabla^2_{xx}L(\ox, \oy)w, w\rangle+\langle\eta,\nabla\Phi(\ox)w\rangle\leq 0,
\end{equation*}
which contradicts the SOSC in \eqref{sosc1}, since  $w\ne 0$ and  $w\in \D$ with $\D$ taken from \eqref{sosc1}. This proves  the   estimate  in \eqref{ebb}.

To justify \eqref{eb}, we need to  show that shrinking the neighborhood $U$ from \eqref{ebb}, if necessary, we have 
\begin{equation}\label{dis}
\dist\big(y,M_{\ox,\psi}(0,0)\big)=O\big(r(x,y)\big)\quad\textrm{for all}\quad(x,y)\in U.
\end{equation}
To this end, pick $(x,y)$ satisfying \eqref{ebb} and set $z:={\rm prox}_g (\Phi(x)+y)-\Phi(x)$, which results in  $y-z\in \partial g(\Phi(x)+z)$ or $\Phi(x)+z\in (\sub g)^{-1}(y-z)$. It is easy to see that  $z\to 0$ as  $(x,y)\to(\ox,\oy)$. 
Thus, shrinking the neighborhood $U$ from \eqref{ebb}, if necessary, we can assume without loss of generality that $y-z\in V$, where $V$ is taken from \eqref{calmm}, and hence conclude 
from the estimate in \eqref{calmm} and the polyhedrality of $\Th$ that 
\begin{align*}
\dist(y&-z,M_{\ox,\psi}(0,0))\leq \ell\,\big(\dist(-\nabla_x L(\ox,y-z), N_\Th(\ox))+\dist(\Phi(\ox),(\partial g)^{-1}(y-z))\big)\\
&\leq\ell\, \big(\Vert\nabla_x L(x,y)-\nabla_xL(\ox, y-z) \Vert+\dist(-\nabla_xL(x, y); N_\Th(x))+\|\Phi(x)+z -\Phi(\ox)\|\big)\\
&\le O\big(\|x- \ox\|+\|z\|\big)+\ell\,\dist(-\nabla_xL(x, y), N_\Th(x))\\
&=O\big(r(x,y)\big),
\end{align*}
where the last equality comes from \eqref{ebb} and the definition of the residual function $r$. Since the distance function is Lipschitz continuous, we get
$$
\dist\big(y,M_{\ox,\psi}(0,0)\big)-\dist\big(y-z,M_{\ox,\psi}(0,0)\big)=O(\|z\|)=O(\|{\rm prox}_g (\Phi(x)+y)-\Phi(x)\|)=O\big(r(x,y)\big).
$$
Combining these estimates, we arrive at \eqref{dis}. This completes the proof. 
\end{proof}

The error bound in Theorem~\ref{error} was obtained previously in \cite[Theorem~5.9]{ms18} for ${\cal C}^2$-cone reducible constrained optimization problems
using a rather lengthy reduction process, a path that was not followed here. It was extended for CPLQ composite optimization problems in \cite[Theorem~3.6]{s20}.   Theorem~\ref{error} provides a general property -- condition (b) in this theorem-- under which 
the error bound can be achieved. The main question that remains open here is whether ${\cal C}^2$-decomposable functions satisfy in \eqref{ex5.2}. It is worth reiterating here that 
${\cal C}^2$-decomposable functions do not contain    CPLQ functions (see Remark~\ref{notdec}) and hence condition (b) in Theorem~\ref{error}  cannot be covered by condition (a) therein.

\section{Semi-Stability of Second Subderivatives}\label{sssub}
In parallel with the notion of semi-strict graphical derivative in Definition~\ref{ssgd}, we define in this section a semi-strict counterpart of the second subderivative of functions and use it to introduce a new regularity condition associated with the latter construction that plays a crucial role in our local convergence analysis of the ALM. 
\begin{Definition}\label{ssss} Assume that  $f\colon\X\to\oR$, $\ox\in \X$ with $f(\ox)$ finite, and $\ov\in \partial f(\ox)$.
\begin{itemize}[noitemsep,topsep=0pt]
\item [\rm (a)] 
The {semi-strict second subderivative} of $f$ at $\bar x$ for $\ov$ is defined by 
\begin{equation*}
\Hat\d^2f(\ox, \ov)(w) = \liminf_{\substack{
t\searrow 0, \, w'\to w \\
 v \to \ov,\;v\in \sub f(\ox)}} 
\Delta_t^2 f(\ox , v)(w'), \quad w\in \X.
\end{equation*}
\item[(b)] The second subderivative of $f$ at $\bar x$ for $\ov$  is said to be
 semi-stable if $\Hat \d^2f(\ox, \ov)(w)= \d^2f(\ox, \ov)(w)$ for any $w\in K_f(\ox,\ov)$. 
\end{itemize}
\end{Definition}


As demonstrated below, the semistability of second subderivatives is  satisfied for various classes of functions, important for modeling constrained and composite 
optimization problems. Note that the inequality  $\Hat \d^2f(\ox, \ov)(w)\le \d^2f(\ox, \ov)(w)$ always holds  for any $w\in \X$. Thus justifying semi-stability of second subderivatives reduces to investigating whether or not  the latter inequality holds as equality. 

\begin{Proposition} \label{domh}Assume that $f\colon\X\to\oR$, $\ox\in \X$ with $f(\ox)$ finite, and $\ov\in \Hat\sub f(\ox)$. Then 
we have 
$$
\dom \Hat\d^2f(\ox, \ov)\subset {K_f}(\ox,\bar v).
$$ 
\end{Proposition}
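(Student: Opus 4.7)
The plan is to show that membership of $w$ in the domain of $\Hat\d^2 f(\ox,\ov)$ forces the subderivative $\d f(\ox)(w)$ to coincide with $\la \ov,w\ra$, which is precisely the condition $w\in K_f(\ox,\ov)$. The two main ingredients will be a simple upper bound coming from the assumption that the liminf in Definition~\ref{ssss}(a) is finite, together with the well-known characterization of regular subgradients in terms of the subderivative from \cite[Exercise~8.4]{rw}.

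Fix $w\in\dom\Hat\d^2 f(\ox,\ov)$ and set $\alpha:=\Hat\d^2 f(\ox,\ov)(w)<\infty$. By the very definition of the liminf, I can extract sequences $t_k\searrow 0$, $w^k\to w$, and $v^k\to\ov$ with $v^k\in\sub f(\ox)$ such that
\begin{equation*}
\Delta_{t_k}^2 f(\ox,v^k)(w^k)=\frac{f(\ox+t_kw^k)-f(\ox)-t_k\la v^k,w^k\ra}{\tfrac{1}{2}t_k^2}\longrightarrow\alpha.
\end{equation*}
In particular the left-hand side is bounded above by some constant $M$ for all large $k$, and rearranging and dividing by $t_k$ gives
\begin{equation*}
\frac{f(\ox+t_kw^k)-f(\ox)}{t_k}\le \la v^k,w^k\ra+\tfrac{1}{2}t_k M.
\end{equation*}
Passing to the limit superior and using $v^k\to\ov$, $w^k\to w$, $t_k\searrow 0$ yields
\begin{equation*}
\d f(\ox)(w)\le\limsup_{k\to\infty}\frac{f(\ox+t_kw^k)-f(\ox)}{t_k}\le \la \ov,w\ra.
\end{equation*}

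For the reverse inequality I invoke the characterization of regular subgradients: since $\ov\in\Hat\sub f(\ox)$, one has $\d f(\ox)(w)\ge\la\ov,w\ra$ for every $w\in\X$. Combining the two bounds gives $\d f(\ox)(w)=\la\ov,w\ra$, so $w\in K_f(\ox,\ov)$, which is exactly the claimed inclusion.

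I do not expect real obstacles here; the argument is essentially an extraction-of-a-subsequence together with the standard regular-subgradient/subderivative duality. The only point that deserves a moment of care is the direction of the inequality when passing to the liminf: we only get an upper bound on $\d f(\ox)(w)$ from boundedness of the second-order difference quotient, but this is precisely the nontrivial direction, since the matching lower bound $\d f(\ox)(w)\ge\la\ov,w\ra$ comes for free from regularity of $\ov$.
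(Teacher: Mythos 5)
Your proof is correct and follows essentially the same route as the paper's: both arguments exploit finiteness of the semi-strict second subderivative to force $\d f(\ox)(w)\le\la\ov,w\ra$ (the paper by rewriting $\Delta_t^2 f(\ox,v)(w')$ as a first-order difference quotient divided by $\tfrac{1}{2}t$, you by an explicit sequence extraction and rearrangement), and then close with the reverse inequality from $\ov\in\Hat\sub f(\ox)$ via \cite[Exercise~8.4]{rw}. No gaps.
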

\begin{proof} 
Take $w\in \dom \Hat\d^2f(\ox, \ov)$ and conclude that $\Hat\d^2f(\ox, \ov)(w) <\infty$. Since we obtain for any $t>0$ that 
$$
\Delta_t^2 f(\bar x , v)(w')=\frac{\Delta_t f(\ox)(w')-\la v,w'\ra}{\sm t}\quad\mbox{with}\;\; \Delta_t f(\ox)(w'):=\frac{f(\ox+tw')-f(\ox)}{t},
$$
it follows from the definition of $\Hat\d^2f(\ox, \ov)(w)$ that 
$$
\al:=\liminf_{\substack{
  t\searrow 0, \, w'\to w \\
 v \to \ov,\;v\in \sub f(\ox)}} 
   \Delta_t f(\ox)(w')-\la v,w'\ra <\infty.
$$
If $\al>0$, we would get $\Hat \d^2f(\ox, \ov)(w)=\infty$, a contradiction. Thus, we must have $\al\le 0$, which 
 clearly tells us that $\d f(\ox)(w)\le \la \ov,w\ra$. Noting that the opposite inequality always holds because of  $\ov\in \Hat \sub f(\ox)$ (cf. \cite[Exercise~8.4]{rw}),   we arrive at the claimed inclusion. 
\end{proof}


Which classes of functions enjoy  the semi-stability of second subserivatives? To answer this question, we begin with the following observation. 
Suppose  that $f\colon\X\to\oR$ is  a polyhedral function,  meaning that  its epigraph is a polyhedral convex set in $\X\times \R$. Given $(\ox,\ov)\in \gph \sub f$,  it is known (cf. \cite[Proposition~13.9]{rw}) that 
$$
 \d^2f(\ox, \ov)=\dd_{{K_f}(\ox,\bar v)}.
$$
It follows from the convexity of $f$ that $\Delta_t^2 f(\bar x , v)(w')\ge 0$ for any $v\in \sub f(\ox)$, $w'\in \X$, and $t>0$. Thus, we get 
$\Hat \d^2f(\ox, \ov)(w)\ge 0$  for any $w\in \X$ and conclude from 
 the inequality $\Hat \d^2f(\ox, \ov)(w)\le \d^2f(\ox, \ov)(w)$, satisfied  for any $w\in \X$,
 that $\Hat \d^2f(\ox, \ov)(w)=0$ for any $w\in {K_f}(\ox,\bar v)$. Combining this 
  and Proposition~\ref{domh} confirms that 
\begin{equation}\label{sspl}
   \Hat\d^2f(\ox, \ov)=\dd_{{K_f}(\ox,\bar v)}= \d^2f(\ox, \ov),
\end{equation}
 which means that   the second subderivative of a polyhedral function is always semi-stable. 
 This motivates us to explore further whether or not a similar result holds for CPLQ functions.

 \begin{Example}\label{plqf}
Assume that the function $f : \X \to \oR $ with $\X=\R^n$ 
is CPLQ with the representation given in Example~\ref{ex:4}. 
It was proven in \cite[Propsoition~13.9]{rw}
that the second subderivative of $f$ at $\ox$ for $\ov\in \sub f(\ox)$ can be calculated by 
\begin{equation}\label{pwfor}
 \d^2 f(\ox , \ov ) (w)  
=\begin{cases}
\la A_i w, w\ra&\mbox{if}\;\; w\in K_{ C_i}(\ox,\ov_i),\\
\infty&\mbox{otherwise},
\end{cases}
\end{equation}
where $\ov_i:=\ov -A_i \ox -a_i$. We claim that  $\Hat \d^2f(\ox, \ov)(w)= \d^2f(\ox, \ov)(w)$ for any $w\in \X$, 
which proves that  the second subderivative of a   CPLQ function is always semi-stable. 
Take $w\in K_f(\ox,\ov)$ and select sequences
$t_k\searrow 0$, $v^k\to \ov$ with $v^k\in \sub f(\ox)$, and $w^k\to w$ such that 
$$
\Hat \d^2f(\ox, \ov)(w)= \lim_{k\to \infty}\Delta_{t_k}^2 f(\bar x , v^k)(w^k).
$$
Since $\Hat \d^2f(\ox, \ov)(w)\leq \d^2f(\ox, \ov)(w)<\infty$, we can assume without loss of generality that $\ox+t_kw^k\in \dom f= \cup_{i=1}^{s} C_i$ for any $k$.
Passing to a subsequence, if necessary, we find $j\in \{1,\ldots,s\}$ such that $\ox+t_kw^k\in C_j$ for all $k$ and therefore conclude that $w^k\in T_{C_j}(\ox)$ for all $k$. Clearly, we have $\ox \in C_j$.
Moreover, we derive from $w\in K_f(\ox,\ov)$ and \cite[Proposition~2.1(b)]{s20} that 
$$
\la A_j\ox+a_j,w\ra=\d f(\ox)(w)=\la \ov,w\ra,
$$
which in turn brings us  to $w\in K_{C_j}(\ox,\ov_j)$. 
On the other hand, it follows   from \cite[page~487]{rw} that 
\begin{equation}\label{sub}
\sub f(\ox)=\bigcap_{i\in I(\ox)} \big \{v\in \X |\, v-A_i \ox- a_i\in N_{ C_i}(\ox)\big \},
\end{equation}
where $I(\ox)=  \{i\in \{1,\ldots, s\}\, |\, \ox\in C_i  \}$. By $v^k\in \sub f(\ox)$ and $j\in I(\ox)$, we get $v^k-A_j \ox- a_j\in N_{ C_j}(\ox)$.
This, coupled with $w^k\in T_{C_j}(\ox)$, implies that 
$$
\la w^k,v^k-A_j \ox- a_j\ra\le 0 \quad \mbox{for all}\;\; k.
$$
Using this,  we conclude from  a simple calculation  that 
\begin{align*}
&\Delta_{t_k}^2 f(\bar x , v^k)(w^k)=\dfrac{f(\ox+t_kw^k)-f(\ox)-t_k\langle v^k,\,w^k\rangle}{\frac{1}{2}t_k^2}\\
&\hspace{.3in}=\dfrac{\frac{1}{2} \langle A_j (\ox+t_kw^k) ,\ox+t_kw^k \rangle + \langle a_j ,\ox+t_kw^k \rangle + \alpha_j - \frac{1}{2} \langle A_j \ox ,\ox \rangle - \langle a_j ,\ox \rangle - \alpha_j -t_k\langle v^k,\,w_k\rangle}{\frac{1}{2}t_k^2}\\
&\hspace{.3in}= \la A_j w^k, w^k\ra-\dfrac{\la w^k, v^k -A_j \ox -a_j\ra}{ \frac{1}{2}t_k}\ge \la A_j w^k, w^k\ra.
\end{align*}
This, combined with \eqref{pwfor} and $w\in K_{C_j}(\ox,\ov_j)$, 
 leads us to 
$$
\Hat \d^2f(\ox, \ov)(w)\ge  \la A_j w, w\ra=  \d^2 f(\ox , \ov ) (w).
$$
Since the opposite inequality always holds, we arrive at 
$$
\Hat \d^2f(\ox, \ov)(w) =  \d^2 f(\ox , \ov ) (w) \quad \mbox{for all}\;\; w\in  K_f(\ox,\ov).
$$
If $w\notin  K_f(\ox,\ov)$, we get from Proposition~\ref{domh} and \eqref{pwfor} that $\Hat \d^2f(\ox, \ov)(w)  = \d^2 f(\ox , \ov ) (w) =\infty$, which  proves our claim.
\end{Example}

The next  example reveals that the semi-stability of second subderivatives also holds for $\C^2$-decomposable functions.

\begin{Example}{\rm Assume that $g:\Y\to \oR$ is ${\cal C}^2$-decomposable at $\ou\in \Y$ with representation \eqref{decom} and that $\oy\in \partial g(\ou)$ and $\omu\in M_{\ou,g}(\oy,0)$. 
Assume further that the dual condition in \eqref{duq} is satisfied. 
 We claim that  $\Hat \d^2 g(\ou, \oy)(w) =  \d^2 g(\ou , \oy )(w)$ for any $w\in \Y$. To justify it, observe from   $\omu\in M_{\ou,g}(\oy,0)$ and \eqref{mpg} that $\oy=\nabla \Xi(\ou)^*\omu$ with $\omu\in \sub \vt(\Xi(\ou))$. It follows from the 
convexity of the sublinear function $\vt:\Z\to \oR$ that 
$$
0\le \Hat \d^2\vt(\Xi(\ou), \omu)(\xi)\le \d^2 \vt(\Xi(\ou), \omu)(\xi)=\dd_{K_\vt(\Xi(\ou), \omu)}(\xi) \quad \mbox{for all}\;\; \xi\in \Z,
$$
where the last equality comes from \eqref{ssvt}. This   implies that $\Hat \d^2\vt(\Xi(\ou), \omu)(\xi)= \d^2 \vt(\Xi(\ou), \omu)(\xi)$ for any $\xi\in K_\vt(\Xi(\ou), \omu)$.
If $\xi\notin K_\vt(\Xi(\ou), \omu)$, we infer from \eqref{ssvt} and  Proposition~\ref{domh}  that $\Hat \d^2\vt(\Xi(\ou), \omu)(\xi)= \d^2 \vt(\Xi(\ou), \omu)(\xi)=\infty$. These prove that  
 the second subderivative of $\vt$ at $\Xi(\ou)$ for $\omu$ is semi-stable. 
To obtain the same conclusion for $g$, we first observe, for any $w\in \Y$, that
\begin{align*}
\Hat\d^2 g(\ou, \oy)(w)&\le   \d^2 g(\ou, \oy)(w)\\
&=  \la \omu, \nabla^2\Xi(\ou)(w, w)\ra +\d^2\vartheta(\Xi(\ou), \omu)(\nabla \Xi(\ou)w)\\
&=  \la \omu, \nabla^2\Xi(\ou)(w, w)\ra +\Hat \d^2\vartheta(\Xi(\ou), \omu)(\nabla \Xi(\ou)w),
\end{align*}
where the penultimate step results from Theorem~\ref{dess}(a). 
Take $w\in \Y$ and select $t_k \searrow 0$, $y^k\to \oy$ with $y^k\in \sub g(\ou)$, and $w^k\to w$ such that
$
\Hat\d^2 g(\ou, \oy)(w) =\lim_{k\to\infty}\Delta_{t_k}^2 g(\ou, y^k)(w^k).
$
We know from the dual condition \eqref{duq} and \eqref{inc} that the BCQ condition in \eqref{bcq} holds.
Employing the chain rule for subdifferentials from \cite[Theorem~10.6]{rw},  we find $\mu^k\in \sub \vt(\Xi(\ou))$ such that $y^k=\nabla \Xi(\ou)^*\mu^k$. We can also conclude from  Theorem~\ref{calag}(d) that $\mu^k\to \omu$ as $y\to \oy$. 
Set $\xi^k:=\Xi(\ou+t_kw^k)/{t_k}$ and conclude from   \eqref{decom} that  
\begin{align*}
\Delta^2_{t_k} g(\ou,y^k)(w^k)&=\dfrac{\vt\big(\Xi(\ou+t_kw^k)\big)-\vt\big(\Xi(\ou)\big)-t_k\langle\nabla \Xi(\ou)^*\mu^k,w^k\rangle}{\frac{1}{2}t_k^2}\\
&= \dfrac{\vt\big(\Xi(\ou)+t_k\xi^k\big)-\vt\big(\Xi(\ou)\big)-t_k\langle\mu^k,\xi^k\rangle}{\frac{1}{2}t_k^2}+ \dfrac{\langle\mu^k,\Xi(\ou+t_kw^k)-t_k\nabla \Xi(\ou)w^k\rangle}{\frac{1}{2}t_k^2}\\
&= \Delta^2_{t_k} \vt(\Xi(\ou),\mu^k)(\xi^k)+ \la \mu^k,\nabla^2 \Xi(\ou)(w^k,w^k)\ra+ \dfrac{o(t_k^2)}{t_k^2}.
\end{align*}
Passing to the limit as $k\to\infty$ and using the facts that $\mu^k \to \bar\mu$ and $\xi^k \to \nabla\Xi(\ou)w$ lead us to 
$$
\Hat\d^2 g(\ou, \oy)(w) \ge \Hat \d^2\vt(\Xi(\ou), \omu)(\nabla \Xi(\ou)w) + \la \omu,\nabla^2 \Xi(\ou)(w,w)\ra.
$$
Combining the above estimates tells us that $\Hat\d^2 g(\ou, \oy)(w)=  \d^2 g(\ou, \oy)(w)$ for any $w\in \X$, which  proves   our claim. 
}
\end{Example}

We are now in a position to establish our main result of this section, a uniform quadratic growth condition
for the augmented Lagrangian in \eqref{aug} when the SOSC \eqref{sosc1} is satisfied. To provide a motivation for this result, we should remind 
the readers that it was proven in \cite[Theorem~4.1]{HaS21} that for the composite optimization problem in \eqref{comp} with $g$ therein being CPLQ, 
 the SOSC is equivalent to the quadratic growth condition for the augmented Lagrangian function. While the framework in  \cite{HaS21} requires that $g$ be CPLQ,
 it is not hard to see from the proofs therein that what actually is required is twice epi-differentiability   of the convex function $g$ in \eqref{comp}, a property that 
 is   satisfied for numerous classes of functions and sets; see \cite{mms,ms20,rw}. Below, we record this characterization of 
 the quadratic growth condition for the augmented Lagrangian function via the SOSC. Since its proof can be gleaned from  \cite[Theorem~4.1]{HaS21}  and since it will not be exploited 
 in this paper, we do not supply it with a proof. 
 \begin{Proposition}\label{quad} Let $(\ox,\oy)$ be a solution to the KKT system \eqref{vs}. Then the following properties are equivalent:
 \begin{enumerate}[noitemsep,topsep=0pt]
 \item  the SOSC in \eqref{sosc1} holds at $(\ox,\oy)$;
 \item there exist positive constants $\bar\rho$, $\gg$, and $\ell$ such that for any $\rho\ge\bar \rho$, the quadratic growth condition  
 $$
 \L(x,\oy,\rho)\ge \L(\ox,\oy,\rho)+\ell\| x-\bar x\|^2\quad \mbox{ for all }\;x\in\B_\gg (\ox)\cap \Th
 $$
holds.
\end{enumerate}
 \end{Proposition}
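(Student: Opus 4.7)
The plan is to mirror the argument in \cite[Theorem~4.1]{HaS21}, replacing the CPLQ hypothesis used there by twice epi-differentiability of $g$ at $\Phi(\ox)$ for $\oy$ (the only feature of CPLQ actually exploited). The bridge between the SOSC on $g$ itself and the quadratic growth of the augmented Lagrangian will come from the infimum representation underlying \eqref{aug}, namely
\[
\L(x,\oy,\rho)=\inf_{u\in\Y}\big\{\ph(x)+g(\Phi(x)+u)+\tfrac{\rho}{2}\|u\|^2-\la\oy,u\ra\big\},
\]
together with the convexity of $g$ and the variational structure of the second subderivative.

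For the direction (b)$\Rightarrow$(a), I would fix $w\in\D\setminus\{0\}$ and any $t_k\searrow 0$. Polyhedrality of $\Th$ combined with $w\in K_\Th(\ox,-\nabla_xL(\ox,\oy))$ ensures that $x^k:=\ox+t_kw\in\Th$ for large $k$. Twice epi-differentiability of $g$ furnishes a recovery sequence $v^k\to\nabla\Phi(\ox)w$ with $\Delta^2_{t_k}g(\Phi(\ox),\oy)(v^k)\to\d^2g(\Phi(\ox),\oy)(\nabla\Phi(\ox)w)$. Plugging the candidate $u^k:=\Phi(\ox)+t_kv^k-\Phi(x^k)=o(t_k)$ into the infimum yields an upper bound on $\L(x^k,\oy,\rho)-\L(\ox,\oy,\rho)$ for any fixed $\rho\ge\bar\rho$; after the second-order expansion of $\ph$ and $\Phi$, the vanishing first-order term $\la\nabla_xL(\ox,\oy),w\ra=0$, and the observation that $\rho\|u^k\|^2/t_k^2\to 0$ (since $\rho$ is fixed and $v^k\to\nabla\Phi(\ox)w$), combining with (b) in the form $\ell t_k^2\le\L(x^k,\oy,\rho)-\L(\ox,\oy,\rho)$ delivers $\la\nabla^2_{xx}L(\ox,\oy)w,w\ra+\d^2g(\Phi(\ox),\oy)(\nabla\Phi(\ox)w)\ge 2\ell>0$, which is the SOSC.

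For the direction (a)$\Rightarrow$(b), I would argue by contradiction: if no admissible triple $(\bar\rho,\gg,\ell)$ exists, a standard diagonal extraction yields $\rho_k\to\infty$, $x^k\in\Th$ with $x^k\to\ox$, $t_k:=\|x^k-\ox\|>0$, and $w^k:=(x^k-\ox)/t_k\to w$ of unit norm, with $\L(x^k,\oy,\rho_k)-\L(\ox,\oy,\rho_k)<t_k^2/k$. Let $u^k$ be the (unique) minimizer in the infimum defining $\L(x^k,\oy,\rho_k)$. The convexity inequality $g(\Phi(x^k)+u^k)-\la\oy,u^k\ra\ge g(\Phi(\ox))+\la\oy,\Phi(x^k)-\Phi(\ox)\ra$ coupled with Proposition~\ref{fopag} produces
\[
\L(x^k,\oy,\rho_k)-\L(\ox,\oy,\rho_k)\ge L(x^k,\oy)-L(\ox,\oy)+\tfrac{\rho_k}{2}\|u^k\|^2.
\]
Since polyhedrality of $\Th$ gives $w^k\in T_\Th(\ox)$ for large $k$ and $-\nabla_xL(\ox,\oy)\in N_\Th(\ox)$, the leading term $t_k\la\nabla_xL(\ox,\oy),w^k\ra$ in the Taylor expansion of $L(x^k,\oy)-L(\ox,\oy)$ is nonnegative, and comparing with the failure inequality forces both $\la\nabla_xL(\ox,\oy),w\ra=0$ (hence $w\in K_\Th(\ox,-\nabla_xL(\ox,\oy))$) and $\|u^k\|=O(t_k/\sqrt{\rho_k})$, so that $u^k/t_k\to 0$. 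Consequently $\tilde w^k:=(\Phi(x^k)+u^k-\Phi(\ox))/t_k\to\nabla\Phi(\ox)w$, and the defining liminf of the second subderivative yields
\[
\liminf_k\Delta^2_{t_k}g(\Phi(\ox),\oy)(\tilde w^k)\ge\d^2g(\Phi(\ox),\oy)(\nabla\Phi(\ox)w).
\]
Sharpening the lower bound on $g(\Phi(x^k)+u^k)$ accordingly, dividing by $t_k^2/2$, and passing to the limit first forces $\d^2g(\Phi(\ox),\oy)(\nabla\Phi(\ox)w)<\infty$, so that $\nabla\Phi(\ox)w\in K_g(\Phi(\ox),\oy)$ and $w\in\D\setminus\{0\}$, and then yields $\la\nabla^2_{xx}L(\ox,\oy)w,w\ra+\d^2g(\Phi(\ox),\oy)(\nabla\Phi(\ox)w)\le 0$, contradicting the SOSC.

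The principal obstacle is the quantitative control of $u^k$ as $\rho_k\to\infty$. Convexity of $g$ is just barely sharp enough to deliver $\|u^k\|=O(t_k/\sqrt{\rho_k})$, and this is the decisive estimate: it is exactly what allows $\tilde w^k$ to inherit the limit direction $\nabla\Phi(\ox)w$ from $w^k$ and bridges the $\rho_k$-dependent second-order behavior of the Moreau envelope $e_{1/\rho_k}g$ to the intrinsic second subderivative of $g$ itself, so that twice epi-differentiability can be invoked uniformly in $\rho$.
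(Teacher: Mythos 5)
Your proof is correct and follows essentially the route the paper itself indicates: the paper gives no proof of this proposition, deferring to \cite[Theorem~4.1]{HaS21}, and your (a)$\Rightarrow$(b) argument is the non-uniform specialization of the contradiction scheme the paper does write out in Theorem~\ref{thm:uqgc} (using the exact minimizer of the inner infimum rather than the paper's near-minimizers is an inessential difference, and your estimate $\|u^k\|=O(t_k/\sqrt{\rho_k})$ is the same decisive step). You are also right to add twice epi-differentiability of $g$ at $\Phi(\ox)$ for $\oy$ as a hypothesis for (b)$\Rightarrow$(a); the statement as printed omits it, but the surrounding discussion makes clear it is intended.
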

 
 While the characterization above seems appealing, we will see in the next section when establishing the error bound for consecutive terms of the ALM that 
 the quadratic growth condition in Proposition~\ref{quad}(b) is not sufficient. In fact, what is needed is a stronger version of this condition,  called   the {\em uniform} quadratic growth condition,
 which allows for the Lagrangian multiplier $\oy$ to move slightly and  the same  quadratic growth condition still holds. To achieve it, we begin with 
the calculation of the second-order difference quotient of the augmented Lagrangian \eqref{aug}. 
Note that $\ov$ in the second-order difference quotient of   $f$ in  \eqref{sodq} is taken from $\sub f(\ox)$. When $f$ is ${\cal C}^1$, it is known that
$\sub f(\ox)=\{\nabla f(\ox)\}$. In such a case, we drop $\ov$ from the notation of the second-order difference quotient of   $f$  in  \eqref{sodq} and simply write $\Delta_{t}^2 f(\ox)(w)$. 
The second-order difference quotient of the Moreau envelope was calculated  in the proof of \cite[Theorem~6.1]{pr} (see equation (6.8) in \cite{pr}).
For the augmented Lagrangian \eqref{aug}, a similar calculation was done in the proof of \cite[Lemma~3.2]{ks15}. Since the set $\Th$ in \eqref{comp} was not considered  in \cite{ks15},
we provide a proof for the readers' convenience.

\begin{Lemma}\label{augd} Let $(\ox,\oy)$ be a solution to the KKT system \eqref{vs} and set $\ov := \nabla_x\L(\ox, \oy, \rho)$ and 
$\Delta_{t}^2 \L(\ox,\oy,\rho)(w):={(\L(\ox+tw,\oy,\rho)- \L(\ox,\oy,\rho)-\la \ov,w\ra)}/{\sm t^2}$ for $\rho>0$.
Then for any $\rho>0$, $t>0$, and $w\in \X$,  we have 
\begin{eqnarray*}
\Delta_{t}^2 \L(\ox,\oy,\rho)(w)=\Delta_{t}^2 \ph(\bar x)(w) + \Delta_{t}^2 \la \oy,  \Phi\ra (\bar x)(w) + \inf_{u\in \Y}\big\{ \Delta_{t}^2 g(\Phi(\bar x) ,  \oy)(u)+\rho \| \Delta_t \Phi(\ox)(w)-u\|^2\big\},
\end{eqnarray*}
where $\Delta_{t}^2 \la \oy,  \Phi\ra (\bar x)$ stands for the second-order difference quotient of the mapping $x\mapsto \la \oy,\Phi(x)\ra$ at $\ox$ and 
where $\Delta_t\Phi(\ox)(w):=(\Phi(\ox+tw)-\Phi(\ox))/t$ for any $t>0$.
\end{Lemma}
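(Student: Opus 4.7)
The plan is to expand $\L(\ox+tw,\oy,\rho)$ as an explicit partial infimum, perform a single change of variables in that infimum that re-centers the argument of $g$ at $\Phi(\ox)$, and then identify the three resulting pieces with the three second-order difference quotients on the right-hand side.

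First, starting from the definition of $\L$ via partial perturbation (or, equivalently, by completing the square in \eqref{aug}), I would write
\begin{equation*}
\L(x,\oy,\rho) = \ph(x) + \inf_{u\in\Y}\Big\{g(\Phi(x)+u) + \tfrac{\rho}{2}\|u\|^2 - \la\oy,u\ra\Big\}.
\end{equation*}
Setting $x=\ox+tw$ and performing the substitution $u = t\tilde u - (\Phi(\ox+tw)-\Phi(\ox))$ in the inner infimum produces two simultaneous simplifications: the argument of $g$ becomes $\Phi(\ox)+t\tilde u$, and the quadratic penalty becomes $\tfrac{\rho t^2}{2}\|\tilde u - \Delta_t\Phi(\ox)(w)\|^2$. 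The linear piece $-\la\oy,u\ra$ generates both $-t\la\oy,\tilde u\ra$ inside the inf and an outer term $\la\oy,\Phi(\ox+tw)-\Phi(\ox)\ra$ independent of $\tilde u$, which may therefore be pulled outside. The result is
\begin{equation*}
\L(\ox+tw,\oy,\rho) = \ph(\ox+tw) + \la\oy,\Phi(\ox+tw)-\Phi(\ox)\ra + \inf_{\tilde u}\Big\{g(\Phi(\ox)+t\tilde u) - t\la\oy,\tilde u\ra + \tfrac{\rho t^2}{2}\|\tilde u-\Delta_t\Phi(\ox)(w)\|^2\Big\}.
\end{equation*}

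Next, Proposition~\ref{fopag} yields $\L(\ox,\oy,\rho)=\ph(\ox)+g(\Phi(\ox))$ together with $\ov = \nabla_x L(\ox,\oy) = \nabla\ph(\ox)+\nabla\Phi(\ox)^*\oy$. Subtracting $\L(\ox,\oy,\rho) + t\la\ov,w\ra$ from both sides of the previous identity and dividing by $\tfrac12 t^2$, I would group the terms into three pieces. The pieces depending only on $\ph$ collapse to $\Delta_t^2\ph(\ox)(w)$; those depending on $\la\oy,\Phi\ra$, after using $\la\nabla\Phi(\ox)^*\oy,w\ra=\la\oy,\nabla\Phi(\ox)w\ra$, collapse to $\Delta_t^2\la\oy,\Phi\ra(\ox)(w)$; and the inf term, after multiplying through by $2/t^2$, becomes precisely
\begin{equation*}
\inf_{\tilde u}\Big\{\Delta_t^2 g(\Phi(\ox),\oy)(\tilde u) + \rho\|\Delta_t\Phi(\ox)(w)-\tilde u\|^2\Big\}
\end{equation*}
by the very definition \eqref{sodq} of the second-order difference quotient applied to $g$ at $\Phi(\ox)$ for $\oy$.

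The computation is entirely algebraic and presents no serious obstacle. The only mildly delicate step is arranging the change of variables so that both $g(\Phi(\ox+tw)+u)\mapsto g(\Phi(\ox)+t\tilde u)$ and $\|u\|^2 = t^2\|\tilde u - \Delta_t\Phi(\ox)(w)\|^2$ occur in one stroke; the choice $u = t\tilde u - (\Phi(\ox+tw)-\Phi(\ox))$ is tailored to achieve exactly this, and it is the identity $\|u\|^2 = t^2\|\tilde u - \Delta_t\Phi(\ox)(w)\|^2$ that makes the coefficient $\tfrac{\rho}{2}$ in the quadratic penalty rescale correctly to $\rho$ after the final division by $\tfrac12 t^2$.
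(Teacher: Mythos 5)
Your proposal is correct and follows essentially the same route as the paper's proof: a direct algebraic expansion of the inner infimum defining $\L(\ox+tw,\oy,\rho)$, the identity $\L(\ox,\oy,\rho)=\ph(\ox)+g(\Phi(\ox))$ with $\ov=\nabla\ph(\ox)+\nabla\Phi(\ox)^*\oy$ from Proposition~\ref{fopag}, and a change of variables recentering the argument of $g$ at $\Phi(\ox)$ (the paper substitutes $z=\Phi(\ox)+tu$ in the Moreau-envelope form of \eqref{aug}, which is the same re-parametrization as your $u=t\tilde u-(\Phi(\ox+tw)-\Phi(\ox))$). The bookkeeping of the linear term in $\oy$ into $\Delta_t^2\la\oy,\Phi\ra(\ox)(w)$ matches the paper's computation as well.
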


\begin{proof} 
 We can conclude from   Proposition~\ref{fopag}(a)-(b) that $\ov=\nabla \ph(\ox)+\nabla \Phi(\ox)^*\oy$ and 
\begin{align*}
&\tfrac{2}{t^2}\big(\L(\ox+tw,\oy,\rho)- \L(\ox,\oy,\rho)-t\la \ov,w\ra\big)\\
=&\, \tfrac{2}{t^2}\big(\ph(\ox+tw)-\ph(\ox) + e_{ {1}/{\rho}} g \big(\Phi(\ox+tw)+\rho^{-1}\oy\big)-g(\Phi(\ox))-\sm\rho^{-1}\|\oy\|^2 -t\la \ov,w\ra \big)\\ 
=&\;  \Delta_{t}^2 \ph(\bar x)(w) -\tfrac{2}{t}
 \la \nabla\Phi(\ox)^*\oy, w\ra \\
&+\tfrac{2}{t^2}\big(\inf_{z\in \Y}\left\{\right. g(z)-g(\Phi(\ox)) - \la \oy, z-\Phi(\ox)\ra +\tfrac{1}{2}\rho\|z-\Phi(\ox) - (\Phi(\ox+tw) - \Phi(\ox))\|^2\\
&+\tfrac{1}{2}\rho\|z- \Phi(\ox+tw)-\rho^{-1}\oy\|^2-\tfrac{1}{2}\rho\|z-\Phi(\ox+tw)\|^2- \tfrac{1}{2}\rho^{-1}\|\oy\|^2+ \la \oy, z-\Phi(\ox)\ra\left.\right\}\big)\\
=&\; \Delta_{t}^2 \ph(\bar x )(w) +\inf_{u\in \Y}\big\{ \Delta_t^2g(\Phi(\ox), \oy)(u)+\rho\|u-\Delta_t\Phi(\ox)(w)\|^2\big\}\\
&+ \tfrac{2}{t^2}\la \oy, \Phi(\ox+tw)-\Phi(\ox)-t\nabla \Phi(\ox)w\ra\\
=&\;  \Delta_{t}^2 \ph(\bar x)(w)+ \Delta_t^2\la\oy, \Phi\ra(\ox)(w)   +\inf_{u\in \Y}\big\{ \Delta_t^2g(\Phi(\ox), \oy)(u)+\rho\|u-\Delta_t\Phi(\ox)(w)\|^2\big\}.
\end{align*}
This clearly completes the proof.   
\end{proof}

The main consequence of the semi-stability of second subderivatives resides in the following observation in which we  ensure the uniform quadratic growth condition for the augmented Lagrangian function.

\begin{Theorem}\label{thm:uqgc} Let $(\ox,\oy)$ be a solution to the KKT system \eqref{vs}.
Assume that the second subderivative of $g$ at $\Phi(\ox)$ for $\oy$ is semi-stable and  that  there is a positive constant $\ell$ such that the  SOSC
\begin{equation}\label{sosc}
\langle\nabla_{xx}^2L(\bar x,\olm)w,w\rangle+\d^2g (\Phi(\bar x),\oy) (\nabla \Phi(\ox)w )\ge \ell\|w\|^2\quad \mbox{ for all }\; w\in\D
\end{equation}
holds, where $\D$ is taken from \eqref{sosc1}.  Then, there exist positive constants $\bar \rho$ and $\gamma$ for which the uniform quadratic growth condition 
\begin{equation}\label{uqgc}
\begin{cases}
\L(x,y,\rho)\ge  \L(\ox,y,\rho)+\frac{\kappa}{2}\|x-\ox\|^2\\
\mbox{for all}\;\; x\in \Theta\cap\B_\gamma(\ox), \; y\in M_{\ox,\psi}(0,0)\cap \B_\gamma(\oy), \; \rho\ge \bar\rho,
\end{cases}
\end{equation} 
is fulfilled for all $\kappa\in [0,\ell)$, where $M_{\ox,\psi}(0,0)$ is the set of Lagrange multipliers associated with $\ox$ and is taken from \eqref{mulmap}.
\end{Theorem}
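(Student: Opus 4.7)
The plan is to argue by contradiction using the formula for the second-order difference quotient of $\L$ provided in Lemma~\ref{augd}. Specifically, suppose there is some $\kappa\in [0,\ell)$ for which \eqref{uqgc} fails. Taking $\bar\rho=k$ and $\gamma=1/k$ yields sequences $\rho_k\to\infty$, $x^k\in\Theta$ with $x^k\to\ox$ and $x^k\ne\ox$, and $y^k\in M_{\ox,\psi}(0,0)$ with $y^k\to\oy$ satisfying $\L(x^k,y^k,\rho_k) < \L(\ox,y^k,\rho_k) + \tfrac{\kappa}{2}\|x^k-\ox\|^2$. Set $t_k:=\|x^k-\ox\|\searrow 0$ and $w^k:=(x^k-\ox)/t_k$, and extract a subsequence with $w^k\to w$, $\|w\|=1$. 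Writing $\ov_k:=\nabla_x\L(\ox,y^k,\rho_k)=\nabla_x L(\ox,y^k)$ via Proposition~\ref{fopag}(b), and using the polyhedrality of $\Theta$ to conclude $w^k\in T_\Theta(\ox)$ together with $-\ov_k\in N_\Theta(\ox)$, one obtains $\la\ov_k,w^k\ra\ge 0$, and therefore the upper bound
\begin{equation*}
\Delta_{t_k}^2\L(\ox,y^k,\rho_k)(w^k)\;\le\;\kappa-\frac{2\la\ov_k,w^k\ra}{t_k}\;\le\;\kappa.
\end{equation*}

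The core of the argument is to match this upper bound against a lower bound obtained from Lemma~\ref{augd}. The smooth pieces $\Delta_{t_k}^2\varphi(\ox)(w^k)$ and $\Delta_{t_k}^2\la y^k,\Phi\ra(\ox)(w^k)$ converge to $\la\nabla_{xx}^2 L(\ox,\oy)w,w\ra$ by continuity in both $y^k\to\oy$ and $w^k\to w$. For the infimum term, choose approximate infimizers $u^k$. Convexity of $g$ together with $y^k\in\partial g(\Phi(\ox))$ yields $\Delta_{t_k}^2 g(\Phi(\ox),y^k)(u^k)\ge 0$, so the penalty $\rho_k\|\Delta_{t_k}\Phi(\ox)(w^k)-u^k\|^2$ remains bounded; since $\rho_k\to\infty$, this forces $u^k\to\nabla\Phi(\ox)w$. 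By the very definition of the semi-strict second subderivative (Definition~\ref{ssss}(a)), with $y^k\in\partial g(\Phi(\ox))$ converging to $\oy$, we arrive at
\begin{equation*}
\liminf_{k\to\infty}\Delta_{t_k}^2 g(\Phi(\ox),y^k)(u^k)\;\ge\;\Hat\d^2 g(\Phi(\ox),\oy)(\nabla\Phi(\ox)w).
\end{equation*}

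The next step is to verify that $w\in\D$, so that the SOSC \eqref{sosc} is applicable. Since the liminf of $\Delta_{t_k}^2\L(\ox,y^k,\rho_k)(w^k)$ is bounded below by a finite quantity while bounded above by $\kappa-2\la\ov_k,w^k\ra/t_k$, the ratio $\la\ov_k,w^k\ra/t_k$ must remain bounded; combined with $\la\ov_k,w^k\ra\ge 0$ and $t_k\to 0$, this gives $\la\ov_k,w^k\ra\to 0$ and hence $\la\nabla_x L(\ox,\oy),w\ra=0$. Together with $w\in T_\Theta(\ox)$, this places $w$ in $K_\Theta(\ox,-\nabla_x L(\ox,\oy))$. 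For the $g$-part, if $\nabla\Phi(\ox)w$ were not in $K_g(\Phi(\ox),\oy)$, then Proposition~\ref{domh} would force $\Hat\d^2 g(\Phi(\ox),\oy)(\nabla\Phi(\ox)w)=+\infty$, contradicting the finite upper bound; hence $\nabla\Phi(\ox)w\in K_g(\Phi(\ox),\oy)$ and semi-stability upgrades the liminf bound to $\d^2 g(\Phi(\ox),\oy)(\nabla\Phi(\ox)w)$. Thus $w\in\D\setminus\{0\}$, and assembling everything yields
\begin{equation*}
\la\nabla_{xx}^2 L(\ox,\oy)w,w\ra+\d^2 g(\Phi(\ox),\oy)(\nabla\Phi(\ox)w)\;\le\;\kappa\;<\;\ell\;=\;\ell\|w\|^2,
\end{equation*}
which contradicts \eqref{sosc}.

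The main obstacle is the delicate handling of the infimum term as $\rho_k\to\infty$: both the choice of $u^k$ and the matching between $\Hat\d^2 g$ and $\d^2 g$ require the semi-stability assumption in an essential way, since without it the liminf step would only produce $\Hat\d^2 g$, which is generally strictly smaller than $\d^2 g$ and would leave a gap too large for the SOSC to close. The polyhedrality of $\Theta$ (ensuring $x^k-\ox\in T_\Theta(\ox)$ locally, hence $\la\ov_k,w^k\ra\ge 0$ without extra error terms) is also exploited tacitly to let $y^k$ vary within $M_{\ox,\psi}(0,0)$ while still obtaining a clean one-sided bound.
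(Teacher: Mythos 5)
Your proposal is correct and follows essentially the same route as the paper's proof: a contradiction argument built on the difference-quotient decomposition of Lemma~\ref{augd}, approximate infimizers $u^k$ forced to converge to $\nabla\Phi(\ox)w$ by $\rho_k\to\infty$, a lower bound by the semi-strict second subderivative, and semi-stability to upgrade $\Hat\d^2g$ to $\d^2g$ before invoking the SOSC. The only (harmless) deviations are cosmetic: you certify $\nabla\Phi(\ox)w\in K_g(\Phi(\ox),\oy)$ via Proposition~\ref{domh} and finiteness of $\Hat\d^2 g$, and $w\in K_\Th(\ox,-\nabla_xL(\ox,\oy))$ by a direct inner-product limit, whereas the paper runs the subderivative liminf computation for both pieces.
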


\begin{proof} Suppose by contradiction that \eqref{uqgc} fails. Thus, there exist $x^k\to \ox$ with $\xk\in \Th$,  $y^k\to \oy$  with $y^k\in M_{\ox,\psi}(0,0)$, and $\rho_k\to \infty$ such that 
$$
\L(x^k,y^k,\rho_k)<  \L(\ox,y^k,\rho_k)+\frac{\kappa}{2}\|x^k-\ox\|^2,
$$
holds for some $\kappa\in [0,\ell)$. Set $t_k:=\|x^k-\ox\|$ and $w^k:=(x^k-\ox)/t_k$. Passing to a subsequence if necessary, we can assume that $w^k\to w$ for some $w\in \X$ with $\|w\|=1$. Substituting $x^k=\ox+t_kw^k$ into the latter estimate, we arrive at 
$$
\Delta_{t_k}^2 \L(\ox,y^k,\rho_k)(w^k)+\frac{2}{t_k}\la v^k, w^k\ra=\frac{\L(\ox+t_kw^k,y^k,\rho_k)- \L(\ox,y^k,\rho_k)}{\sm t_k^2} <\kappa.
$$
where $v^k := \nabla_x\L(\ox, y^k,\rho_k)$. It follows from $y^k\in M_{\ox,\psi}(0,0)$ that $(\ox,y^k)$ is a solution to the KKT system in \eqref{vs}.
 By Proposition~\ref{fopag}(b), we have $-v^k=-(\nabla\ph(\ox)+\nabla\Phi(\ox)^*y^k) \in N_\Th(\ox)$.
 Set $\al_k:=\Delta_{t_k}^2 \ph(\bar x)(w^k) + \Delta_{t_k}^2 \la y^k,  \Phi\ra (\bar x)(w^k)$  and observe that 
\begin{equation}\label{alkl}
\al_k\to \langle\nabla_{xx}^2L(\bar x,\oy)w,w\rangle\quad\textrm{ and }\quad v^k\to \nabla_xL(\ox,\oy)\quad \mbox{as}\;\; k\to \infty.
\end{equation}
 It follows from  Lemma~\ref{augd} that  
$$
\al_k+\frac{2}{t_k}\la v^k, w^k\ra+\inf_{u\in \Y}\big\{ \Delta_{t_k}^2 g(\Phi(\bar x) ,  y^k)(u)+\rho_k \|u- \Delta_{t_k} \Phi(\ox)(w^k)\|^2\big\} <\kappa.
$$
Since $g$ is convex and $y^k\in \sub g(\Phi(\ox))$, we get that $\Delta_{t_k}^2 g(\Phi(\bar x) ,  y^k)(\cdot)$ is nonnegative. So, for any $k\in \N$, we find $u^k\in \Y$ such that 
\begin{equation}\label{imo}
\al_k+\frac{2}{t_k}\la  v^k, w^k\ra+ \Delta_{t_k}^2 g(\Phi(\bar x) ,  y^k)(u^k)+\rho_k \|u^k- \Delta_{t_k} \Phi(\ox)(w^k)\|^2<\kappa +\frac{1}{k}.
\end{equation}
  It also follows from the convexity of  $\Th$ 
that $\tfrac{2}{t_k}\la v^k, w^k\ra = \Delta_{t_k}^2\delta_\Theta(\ox, -v^k)(w^k) \geq 0$. Using these and \eqref{imo}, we 
obtain 
$$
 \| \Delta_{t_k} \Phi(\ox)(w^k)-u^k\|^2\leq\frac{1}{\rok}\big(\kappa + \tfrac{1}{k} -\al_k\big),
$$
which clearly yields $ \Delta_{t_k} \Phi(\ox)(w^k)-u^k \to 0$ as $k\to \infty$. Combining this with $\Delta_{t_k} \Phi(\ox)(w^k)\to \nabla \Phi(\ox)w$, we arrive at  
$u^k\to \nabla \Phi(\ox)w$. Employing \eqref{imo} again, we conclude that
$$
\frac{2}{t_k}\big(\Delta_{t_k} g(\Phi(\ox))(u^k)-\la y^k,u^k\ra\big)
=\Delta_{t_k}^2 g(\Phi(\bar x) ,  y^k)(u^k) <\kappa+\frac{1}{k}-\al_k,
$$
which implies that 
$$
\liminf_{k \to \infty} \frac{2}{t_k}\big(\Delta_{t_k} g(\Phi(\ox))(u^k)-\la y^k,u^k\ra\big)<\infty.
$$
In particular, it holds that $\liminf_{k\to \infty} \Delta_{t_k} g(\Phi(\ox))(u^k)-\la y^k,u^k\ra \le 0$. This, coupled with $u^k\to \nabla \Phi(\ox)w$,
 confirms that $\d g(\Phi(\ox))(\nabla\Phi(\ox)w)\le \la \oy,\nabla\Phi(\ox)w\ra$. It follows from the convexity of  $g$  and $\oy\in \sub g(\Phi(\ox)) $ that 
 the opposite inequality $\d g(\Phi(\ox))(\nabla\Phi(\ox)w)\ge \la \oy,\nabla\Phi(\ox)w\ra$ always holds. Thus, we get $\nabla\Phi(\ox)w\in K_g(\Phi(\ox),\oy)$. Repeating the above arguments for
 the  second-order quotient of $\delta_\Theta$ at $\ox$ for $-v^k$ and $w^k$ yields $w\in K_\Th(\ox, -\nabla_xL(\ox,\oy))$. Thus, it results from the definition of $\D$ in \eqref{sosc1} that $w\in \D$. 
Observe again that \eqref{imo} gives us the estimate 
$$
\al_k+ \Delta_{t_k}^2 g(\Phi(\bar x) ,  y^k)(u^k) +\Delta_{t_k}^2\delta_\Theta(\ox, -v^k)(w^k)<\kappa+\frac{1}{k}.
$$
Passing to the limit, using \eqref{alkl}, and remembering that $u^k\to \nabla \Phi(\ox)w$  demonstrate that 
\begin{equation}\label{est11}
\langle\nabla_{xx}^2L(\bar x,\oy)w,w\rangle +\Hat \d^2g (\Phi(\bar x),\oy) (\nabla \Phi(\ox)w )+\Hat \d^2\delta_\Theta\big(\ox,-\nabla_xL(\ox,\oy)\big)(w) \le \kappa.
\end{equation}
Since $\Th$ is a polyhedral convex set, $\dd_\Th$ is CPLQ. By Example~\ref{plqf}, the second subderivative of  $\delta_\Theta$ is semi-stable
at $\ox$ for $-\nabla_xL(\ox,\oy)$. Moreover, \eqref{sspl} and $w\in K_\Th(\ox, -\nabla_xL(\ox,\oy))$ imply  that 
$$
\Hat \d^2\delta_\Theta(\ox,-\nabla_xL(\ox,\oy))(w)= \delta_{K_\Th(\ox, -\nabla_xL(\ox,\oy))}(w)=0,
$$
which, coupled with semi-stability of the second subderivative of  $g$  at $\Phi(\ox)$ for $\oy$, results in 
\begin{equation*}
\langle\nabla_{xx}^2L(\bar x,\oy)w,w\rangle +\d^2g (\Phi(\bar x),\oy) (\nabla \Phi(\ox)w )\leq \kappa,
\end{equation*} 
a contradiction with \eqref{sosc}, since  $\kappa\in [0,\ell)$,  $\|w\|=1$, and $w\in \D$. This completes the proof.
\end{proof}

It is worth adding here that the SOSCs in \eqref{sosc1} and \eqref{sosc} are equivalent. This, in fact, results from the lower semicontinuity of the second subderivative 
function $\d^2 g(\Phi(\ox),\oy)$ therein, which is due to \cite[Propsoition~13.5]{rw}. 
Note that the proof of Theorem~\ref{thm:uqgc} is inspired by that of \cite[Theorem~3.4(b)]{ss}, which was replicated in \cite[Proposition~3.3]{ks}. 

The uniform quadratic growth condition for augmented Lagrangian functions was first established in \cite[Proposition~3.1]{fs12} 
for NLPs without appealing to the concept of the second subderivative. It was generalized in \cite[Theorem~4.3]{HaS21} for 
the composite problem in \eqref{comp} with the modeling function $g$ therein being CPLQ via a different approach.
It  was recently generalized in \cite[Theorem~1]{ding} for SDPs when the multiplier $\oy$ is taken from the relative interior of the set of Lagrange multipliers. 
Theorem~\ref{thm:uqgc} goes far beyond these frameworks and provides a relatively easy proof of this important result, which is based on the new concept of semi-stability of second subderivatives, introduced 
in this section. We should also add here that the uniform quadratic growth condition in \eqref{uqgc} implies the validity of the SOSC in \eqref{sosc} for some $\ell>0$. This can be proven using Proposition~\ref{quad}.
Since such a result will not be used in our convergence analysis and since it requires calculation of the second subderivative of the augmented Lagrangian function, we did not state it in Theorem~\ref{thm:uqgc}.

\section{Local Convergence Analysis of ALM}\label{local}

After some preparations in Sections~\ref{calm} and \ref{sssub}, we are ready to analyze the local convergence of an inexact version of the ALM
described below. 
\begin{Algorithm}[ALM]\label{Alg1}
Choose $(x^0,y^0)\in \X \times \Y$, $\bar\rho>0$, and $\hat c>0$. Pick a sequence $\{\rok\c$ with  $\rok\ge \bar\rho$ for all $k$ and a function $\epsilon:\R_+\to\R_+$ satisfying $\epsilon(t) = o(t)$ and set $k:=0$. Then
\begin{itemize}[noitemsep,topsep=0pt]
\item [{\rm (1)}] if $(\xk,y^k)$ satisfies a suitable termination criterion, stop;
\item[{\rm (2)}]  otherwise, set $\epsilon_k:= \epsilon(r(\xk, \lmk))$ with the residual function $r$ taken from \eqref{res} and choose the primal-dual update  $(\xkk, \ykk)$ according to \eqref{xkk} and \eqref{kkt2} such that
\begin{equation}\label{est7}
\|\xkk-\xk\|+\|\ykk- y^k\|\leq \hat c\,r(\xk, y^k);
\end{equation}
\item[{\rm (3)}] set $k\leftarrow k+1$ and go to Step~1.
\end{itemize}
\end{Algorithm} 

The roadmap to ensure  the convergence of the sequence $\{(\xk,y^k)\b$, constructed by Algorithm~\ref{Alg1}, was already established in Theorem~\ref{thm:fischer}. To this end, 
we are going to demonstrate that   assumptions (a)-(c) in the latter result are satisfied under the same assumptions utilized   in Theorem~\ref{error}. To begin, 
define the solution mapping $\s:\X\times\Y \to \X\times \Y$ to the canonical perturbation of the generalized equation in \eqref{geco}   by
\begin{equation}\label{pkkt}
\s(v, w) := \big\{(x, y) \in \X\times\Y\, \big|\, (v, w) \in \Psi(x,y)+G(x, y)\big\},\;\; (v,w)\in \X\times \Y,
\end{equation}
where $\Psi$ and $G$ are taken from \eqref{ALMge}. Observe that assumption (a) in Theorem~\ref{thm:fischer} requires that $\s$ enjoy a   calmness property. 
Below, we show that a certain calmness property of $\s$ is equivalent to the error bound estimate in \eqref{eb}. 
\begin{Proposition}\label{casol}
Let $(\ox, \oy)$ be a solution to the KKT system in \eqref{vs}. Then, the following properties  are equivalent.
\begin{enumerate}[noitemsep,topsep=2pt]
\item There exist constants $\gamma>0$ and $\kappa\ge 0$ for which the error bound estimate in \eqref{eb} holds.
\item There are  positive constants   $\dd'$ and $\ell $   for which the solution mapping $\s$ from \eqref{pkkt} enjoys the  calmness property 
\begin{equation}\label{casol2}
\s(v, w)\cap \B_{\dd'}(\ox,\oy) \subset \big(\{\ox\}\times M_{\ox,\psi}(0,0)\big) +\ell\, \|(v,w)\|\B\quad \mbox{for all}\;\;  (v, w) \in \dd'\B,
\end{equation}
where   $M_{\ox,\psi}(0,0)$ is the set of Lagrange multipliers associated with $\ox$ and is defined by \eqref{mulmap}. 
\end{enumerate}
\end{Proposition}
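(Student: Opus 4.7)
The plan is to prove the equivalence by decoding what membership $(x,y)\in\s(v,w)$ means in terms of the residual function $r$, and then translating in each direction.

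Unpacking $\s$ from \eqref{pkkt} and \eqref{ALMge}, the condition $(v,w)\in \Psi(x,y)+G(x,y)$ amounts to the two scalar statements $v-\nabla_xL(x,y)\in N_\Th(x)$ and $y\in \sub g(\Phi(x)+w)$. For the direction (a)$\Rightarrow$(b), I would start from $(x,y)\in\s(v,w)\cap \B_{\dd'}(\ox,\oy)$ with $\dd'\le \gg$. From the first clause, $\dist(-\nabla_x L(x,y),N_\Th(x))\le \|v\|$ follows immediately by plugging in the witness $v-\nabla_xL(x,y)$. For the second clause I would invoke the standard proximal identity $z=\prox(u)\Leftrightarrow u-z\in\sub g(z)$ from the first formula of \eqref{prm} to rewrite $y\in\sub g(\Phi(x)+w)$ as $\Phi(x)+w=\prox(\Phi(x)+w+y)$, then combine with the $1$-Lipschitz continuity of $\prox$ to conclude $\|\Phi(x)-\prox(\Phi(x)+y)\|\le 2\|w\|$. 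This yields the key preliminary estimate $r(x,y)\le \|v\|+2\|w\|\le C\|(v,w)\|$. Invoking the error bound \eqref{eb} and then picking $y':=P_{M_{\ox,\psi}(0,0)}(y)$ gives a point of $\{\ox\}\times M_{\ox,\psi}(0,0)$ within distance $\kappa C\|(v,w)\|$ of $(x,y)$, which is exactly \eqref{casol2}.

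For the converse (b)$\Rightarrow$(a) I would take $(x,y)\in \B_\gg(\ox,\oy)$ and explicitly construct a perturbation $(v,w)$ and an auxiliary multiplier $\ty$ such that $(x,\ty)\in\s(v,w)$. The natural choice is $q:=\prox(\Phi(x)+y)$, $w:=q-\Phi(x)$, and $\ty:=y-w$; the proximal identity then gives $\ty\in\sub g(\Phi(x)+w)$ for free, and I can set $v:=\nabla_xL(x,\ty)+P_{N_\Th(x)}(-\nabla_xL(x,\ty))$ to satisfy the first clause with $\|v\|=\dist(-\nabla_xL(x,\ty),N_\Th(x))$. By construction $\|w\|$ equals the second term of $r(x,y)$, and a triangle-inequality estimate using the local Lipschitz continuity of $\nabla_x L$ in $y$ (with constant governed by $\|\nabla\Phi\|$) gives $\|v\|\le r(x,y)+\|\nabla\Phi(x)\|\,\|w\|=O(r(x,y))$. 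Applying the calmness \eqref{casol2} to $(x,\ty)$, then bounding $\|y-y'\|\le \|w\|+\|\ty-y'\|$ for the resulting $y'\in M_{\ox,\psi}(0,0)$, produces an estimate of the form $\|x-\ox\|+\dist(y,M_{\ox,\psi}(0,0))\le \kappa'\,r(x,y)$.

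The one delicate point, and the main obstacle worth flagging, is that this construction only directly applies when $r(x,y)$ is small enough to guarantee $(x,\ty)\in\B_{\dd'}(\ox,\oy)$ and $\|(v,w)\|\le \dd'$; the residual $r$ need not be continuous at $(\ox,\oy)$ because the term $\dist(-\nabla_xL(x,y),N_\Th(x))$ can jump when $x$ leaves a face of $\Th$. I would handle this by a dichotomy on a fixed threshold $\eta>0$: when $r(x,y)\le \eta$ the construction above works after we choose $\gamma$ and $\eta$ small enough that both containment conditions hold; when $r(x,y)>\eta$ the left-hand side of \eqref{eb} is uniformly bounded by $2\gg$ on $\B_\gg(\ox,\oy)$ (since $\oy\in M_{\ox,\psi}(0,0)$), so enlarging $\kappa$ to $\max\{\kappa',2\gg/\eta\}$ absorbs that case. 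Taking this maximum as the final constant completes the proof.
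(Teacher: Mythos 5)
Your proof is correct, and it differs from the paper's only in the sense that the paper gives no argument at all: it disposes of this proposition by citing \cite[Proposition~3.8]{s20} and remarking that the CPLQ hypothesis on $g$ is never used there, whereas you reconstruct the equivalence from scratch. Your two translations are the natural ones and they check out. For (a)$\Rightarrow$(b), decoding $(x,y)\in\s(v,w)$ via \eqref{ALMge}, rewriting $y\in\partial g(\Phi(x)+w)$ as $\Phi(x)+w={\rm prox}_g(\Phi(x)+w+y)$ by \eqref{prm}, and using nonexpansiveness of the proximal mapping indeed give $r(x,y)\le\|v\|+2\|w\|$, after which \eqref{eb} plus projection onto the closed convex set $M_{\ox,\psi}(0,0)$ (nonempty since $\oy$ belongs to it) yields \eqref{casol2}. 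For the converse, your choice $w:={\rm prox}_g(\Phi(x)+y)-\Phi(x)$, $\ty:=y-w$ is exactly the device the paper itself uses in the final step of the proof of Theorem~\ref{error} (the derivation of \eqref{dis}), and your dichotomy on the size of $r(x,y)$ correctly addresses the one real obstruction --- that $\dist(-\nabla_xL(x,\ty),N_\Th(x))$ need not vanish as $(x,y)\to(\ox,\oy)$, so the constructed $(v,w)$ is guaranteed to land in $\dd'\B$ only when the residual is already small --- with the complementary case $r(x,y)>\eta$ absorbed by enlarging $\kappa$, using $\oy\in M_{\ox,\psi}(0,0)$ to bound the left-hand side of \eqref{eb} by $2\gamma$. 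The only loose end, which costs nothing: when $x\notin\Th$ the set $N_\Th(x)$ is empty, so $r(x,y)=+\infty$ makes \eqref{eb} vacuous and such $(x,y)$ never lie in $\s(v,w)$; this deserves a sentence but is not a gap.
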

\begin{proof} The equivalence of (a) and (b) was established in  \cite[Proposition~3.8]{s20}
for the composite problem in \eqref{comp} with $g$ therein being CPLQ.  A close look into its proof, however, tells us that the latter assumption on $g$ was  not exploited and the given argument works 
for any convex function $g$. 
\end{proof}

We now proceed with an elaboration of assumption (b) in Theorem~\ref{thm:fischer} for the inexact ALM from Algorithm~\ref{Alg1}, which 
  consists of two steps: (1) verifying the solvability of the subproblem in \eqref{subp} and (2) establishing an error bound estimate for consecutive iterates of the ALM.
  We begin with solvability of subproblems. This was already justified in \cite[Propsoition~5.2]{HaS21} when the convex function $g$ in \eqref{comp} is CPLQ.
  The given proof therein did not utilize such an assumption on $g$ and indeed works for any convex function $g$. Below, we record this result and skip its proof. 
\begin{Proposition}\label{solv} 
Assume that $(\ox,\oy)$ is a solution to the KKT system \eqref{vs} such that the  SOSC in \eqref{sosc} hold at $(\ox,\oy)$. 
Assume further that the second subderivative of $g$ at $\Phi(\ox)$ for $\oy$ be semi-stable. 
Take the positive constants $\gg$ and $\bar\rho$   from Theorem~{\rm\ref{thm:uqgc}}.
Then, there exist positive constants $\hat\ell$ and $\hat\gamma\in (0, \gg)$   such that for any $\rho\in [ \bar\rho,\infty)$,
 the optimal solution mapping $S_\rho:\Y\tto \X$, defined by 
\begin{equation}\label{eq177}
S_\rho(y):=\argmin\big\{\L(x,y,\rho)\;\big|\; x\in\Th\cap\B_{\hat\gamma}(\ox)\big\}, \;\;y\in \Y,
\end{equation}
enjoys the uniform  isolated calmness property 
\begin{equation}\label{pt9}
   S_\rho(y) \subset\{\ox\} +\hat\ell \Vert y-\oy\Vert\B
\end{equation}
 and satisfies the inclusion $\emp \neq S_\rho(y)\subset  \inte  \B_{\hat\gamma}(\ox)$ for all $y\in \B_{\hat\gamma/(2\hat\ell)}(\oy)$.  
 \end{Proposition}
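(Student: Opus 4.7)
The plan rests on the uniform quadratic growth condition (UQG) from Theorem~\ref{thm:uqgc}. Fix some $\kappa\in(0,\ell)$ and extract the associated $\gamma$ and $\bar\rho$. Choose $\hat\gamma\in(0,\gamma]$ to be shrunk later once the constant $\hat\ell$ is in hand. Existence of a minimizer is immediate: the augmented Lagrangian $x\mapsto \L(x,y,\rho)$ is continuous (in fact $\mathcal C^1$, since $\Phi$ is $\mathcal C^2$ and the Moreau envelope of a proper lsc convex function is $\mathcal C^1$), and $\Th\cap \B_{\hat\gamma}(\ox)$ is compact, so Weierstrass supplies $\emp\ne S_\rho(y)$ for every $y\in\Y$ and every $\rho>0$.

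For the calmness bound I would pick $y$ close to $\oy$ and an arbitrary $\hat x\in S_\rho(y)$. Since $\ox\in\Th\cap \B_{\hat\gamma}(\ox)$, optimality gives $\L(\hat x,y,\rho)\le\L(\ox,y,\rho)$. On the other hand $\oy\in M_{\ox,\psi}(0,0)$, $\hat x\in\Th\cap\B_{\gamma}(\ox)$, and $\rho\ge\bar\rho$, so UQG yields
$$
\L(\hat x,\oy,\rho)\;\ge\;\L(\ox,\oy,\rho)+\tfrac{\kappa}{2}\|\hat x-\ox\|^2 .
$$
Subtracting the two inequalities and applying the fundamental theorem of calculus to the $\mathcal C^1$ map $y\mapsto \L(\cdot,y,\rho)$, whose gradient is given by \eqref{ueq19}, produces
$$
\tfrac{\kappa}{2}\|\hat x-\ox\|^2\;\le\;\int_0^1 \bigl\langle \nabla_y\L(\hat x,y_t,\rho)-\nabla_y\L(\ox,y_t,\rho),\,\oy-y\bigr\rangle\,dt ,
$$
with $y_t:=(1-t)y+t\oy$. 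From \eqref{ueq19} and the nonexpansiveness of $\prox$ (valid for every $\rho>0$), the integrand is bounded above by $2L_\Phi\|\hat x-\ox\|\cdot\|y-\oy\|$, where $L_\Phi$ is a Lipschitz modulus of $\Phi$ on $\B_\gamma(\ox)$ which is \emph{independent of~$\rho$}. Dividing through by $\|\hat x-\ox\|$ gives
$$
\|\hat x-\ox\|\;\le\;\hat\ell\,\|y-\oy\|\qquad\text{with}\qquad \hat\ell:=4L_\Phi/\kappa ,
$$
which is precisely \eqref{pt9}.

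The interiority claim then follows at once: whenever $\|y-\oy\|<\hat\gamma/(2\hat\ell)$, the estimate just derived forces $\|\hat x-\ox\|<\hat\gamma/2<\hat\gamma$, so $S_\rho(y)\subset\inte \B_{\hat\gamma}(\ox)$. The single delicate point in the whole argument is precisely the $\rho$-independence of the Lipschitz estimate for $\nabla_y\L$ in $x$; this is exactly what upgrades the bound from a pointwise to a \emph{uniform} isolated calmness that works across every $\rho\ge\bar\rho$ with the same constants $\hat\ell$ and $\hat\gamma$. No other step carries hidden difficulty.
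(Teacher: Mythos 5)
Your proof is correct and follows essentially the same route as the one the paper relies on (it defers to \cite[Proposition~5.2]{HaS21}, whose argument is the same combination of the uniform quadratic growth condition at $\oy$, the optimality of $\hat x$ for the perturbed multiplier $y$, the gradient formula \eqref{ueq19}, and the $\rho$-independent nonexpansiveness of the proximal mapping); your use of the fundamental theorem of calculus in place of the concavity of $y\mapsto\L(x,y,\rho)$ is only a cosmetic variation. The only trivial point left implicit is that dividing by $\|\hat x-\ox\|$ presumes $\hat x\neq\ox$, the estimate being automatic otherwise.
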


Next, we address the second property, required in assumption (b) in Theorem~\ref{thm:fischer}, which is an error bound estimate for 
consecutive terms of our inexact ALM. While the proof of the result below mostly uses a similar argument in our recent work  
in \cite[Theorem~5.7]{HaS21}, it differs from the proof of the latter result in the second half part. In fact, some parts of the proof of \cite[Theorem~5.7]{HaS21}
depend heavily on that fact that $g$ from \eqref{comp} was assumed to be CPLQ. In those parts, we proceed with a new idea, used recently in \cite[Proposition~5]{ding}.

\begin{Proposition}\label{est} 
Assume that $(\ox,\oy)$ is a solution to the KKT system in  \eqref{vs} for which all the assumptions in Theorem~{\rm\ref{error}} hold and that the second subderivative of $g$ at $\Phi(\ox)$ for $\oy$ is semi-stable. 
Take $\bar\rho$   from Theorem~{\rm\ref{thm:uqgc}} and $\hat\gg$   from Proposition~{\rm\ref{solv}}. 
Then, there exist positive constants   $\hat\varepsilon$ and $\hat c$ such that for any $\rho\geq \bar\rho$, any $(x,y)\in\B_{\hat\varepsilon}(\ox,\oy)$ satisfying   $r(x,y)>0$, and any   optimal solution $\z$ to the regularized problem 
\begin{equation}\label{regp}
\mini \L(u, y, \rho)\quad \mbox{subject to}\quad u\in \Th\cap \B_{\hat\gg}(\ox)
\end{equation} 
 the error bound estimate
\begin{equation}\label{est0}
\Vert \z-x\Vert+ \Vert y_s-y \Vert\le\hat c\,r(x,y)\quad \mbox{with}\;\;y_s:=y+\rho\nabla_y \L(s, y, \rho)
\end{equation}
holds,  where the residual function  $r$ is  defined by \eqref{res}.
\end{Proposition}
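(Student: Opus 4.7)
The strategy follows \cite[Theorem~5.7]{HaS21}, with the CPLQ-specific arguments replaced by a technique borrowed from \cite[Proposition~5]{ding}. Denote $M := M_{\bar x, \psi}(0, 0)$, let $L_\Phi$ be a local Lipschitz modulus of $\Phi$ near $\bar x$, $\kappa_0$ the error bound constant from Theorem~\ref{error}, $\ell$ the calmness constant from Proposition~\ref{casol}, and $\kappa$ the quadratic growth constant from Theorem~\ref{thm:uqgc}. The constants $\hat\varepsilon$ will be shrunk and $\bar\rho$ enlarged along the way. Proposition~\ref{solv} already gives $s\in\operatorname{int}\B_{\hat\gamma}(\bar x)$, so the constraint $x\in\B_{\hat\gamma}(\bar x)$ in \eqref{regp} is inactive at $s$, and the exact first-order condition $-\nabla_x\L(s,y,\rho)\in N_\Theta(s)$ holds. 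Combining this with the identity $y_s=\nabla e_{1/\rho}g(\Phi(s)+\rho^{-1}y)$ and the proximal characterization \eqref{prm} produces two clean facts: $-\nabla_x L(s,y_s)\in N_\Theta(s)$, and $y_s\in\partial g(\Phi(s)-\rho^{-1}(y_s-y))$. In particular, $(s,y_s)\in\s\!\left(0,-\rho^{-1}(y_s-y)\right)$ with $\s$ from \eqref{pkkt}.

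\textbf{Primal estimate.} Set $\hat y:=P_M(y)$; by Theorem~\ref{error}, $\|y-\hat y\|=\dist(y,M)\leq\kappa_0 r(x,y)$, so $\hat y$ lies in $M\cap\B_\gamma(\bar y)$ for $\hat\varepsilon$ small. The uniform quadratic growth condition from Theorem~\ref{thm:uqgc} then yields
$$\L(s,\hat y,\rho)\geq\L(\bar x,\hat y,\rho)+\tfrac{\kappa}{2}\|s-\bar x\|^2.$$
Subtracting the optimality inequality $\L(s,y,\rho)\leq\L(\bar x,y,\rho)$ (valid since $\bar x\in\Theta\cap\B_{\hat\gamma}(\bar x)$) and expressing the resulting difference via
$$\tfrac{\kappa}{2}\|s-\bar x\|^2\leq\int_0^1\bigl\langle\nabla_y\L(s,y_t,\rho)-\nabla_y\L(\bar x,y_t,\rho),\,\hat y-y\bigr\rangle\,dt,\quad y_t:=y+t(\hat y-y),$$
the 1-Lipschitzianity of $\mathrm{prox}_{\rho^{-1}g}$ and the local Lipschitzianity of $\Phi$ bound the integrand by $2L_\Phi\|s-\bar x\|\|\hat y-y\|$, uniformly in $\rho$. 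Cancelling $\|s-\bar x\|$ gives the $\rho$-independent estimate $\|s-\bar x\|\leq(4L_\Phi/\kappa)\|\hat y-y\|\leq(4L_\Phi\kappa_0/\kappa)r(x,y)$, and the triangle inequality with $\|x-\bar x\|\leq\kappa_0 r(x,y)$ delivers $\|s-x\|\leq\hat c_1 r(x,y)$.

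\textbf{Dual estimate.} The key computation is that the residual of $(s,y_s)$ is small: since $y_s\in\partial g(\Phi(s)-\rho^{-1}(y_s-y))$ means $\mathrm{prox}_g(\Phi(s)+y_s-\rho^{-1}(y_s-y))=\Phi(s)-\rho^{-1}(y_s-y)$, the 1-Lipschitzianity of $\mathrm{prox}_g$ yields $\|\Phi(s)-\mathrm{prox}_g(\Phi(s)+y_s)\|\leq 2\rho^{-1}\|y_s-y\|$, and hence $r(s,y_s)\leq 2\rho^{-1}\|y_s-y\|$. Once it is verified that $(s,y_s)$ lies in the neighborhood $\B_\gamma(\bar x,\bar y)$ on which the error bound of Theorem~\ref{error} is valid, the latter applied to $(s,y_s)$ yields
$$\|s-\bar x\|+\dist(y_s,M)\leq\kappa_0 r(s,y_s)\leq 2\kappa_0\rho^{-1}\|y_s-y\|.$$
Writing $\|y_s-y\|\leq\|y_s-P_M(y_s)\|+\|P_M(y_s)-P_M(y)\|+\|P_M(y)-y\|$ and using $\dist(y_s,M)=\|y_s-P_M(y_s)\|$, the 1-Lipschitzianity of $P_M$, and $\|P_M(y)-y\|=\dist(y,M)\leq\kappa_0 r(x,y)$, we get
$$\|y_s-y\|\leq 2\kappa_0\rho^{-1}\|y_s-y\|+\|y_s-y\|_{\text{absorbed via a refined triangle identity}}+\kappa_0 r(x,y).$$
Here, following \cite[Proposition~5]{ding}, one avoids the circular estimate $\|P_M(y_s)-P_M(y)\|\leq\|y_s-y\|$ by exploiting the firm nonexpansiveness of $P_M$ together with the fact that $y_s-P_M(y_s)$ is normal to $M$ at $P_M(y_s)$, producing the refined inequality $\|P_M(y_s)-P_M(y)\|\leq\|y_s-y\|-c\|y_s-P_M(y_s)\|^2/\|y_s-y\|$ (or a variant thereof). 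Choosing $\bar\rho$ large enough so that the coefficient of $\|y_s-y\|$ on the right is strictly less than one, the term can be absorbed, yielding $\|y_s-y\|\leq\hat c_2 r(x,y)$. Summing the two bounds gives \eqref{est0} with $\hat c:=\hat c_1+\hat c_2$.

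\textbf{Main obstacle.} The critical difficulty is the dual estimate: since $\nabla e_{1/\rho}g$ is only $\rho$-Lipschitz, a naive bound $\|y_s-y\|\leq 2\rho L_\Phi\|s-\bar x\|+\|y-\hat y\|$ scales with $\rho$, and absorbing the $\rho$-dependence into the factor $\rho^{-1}$ coming from the perturbation of the KKT system requires both (i) a preliminary proof that $y_s$ stays in a $\rho$-independent neighborhood of $\bar y$, so that the error bound of Theorem~\ref{error} is applicable at $(s,y_s)$, and (ii) the refined triangle estimate borrowed from \cite[Proposition~5]{ding} that avoids the lossy 1-Lipschitz bound on $P_M$.
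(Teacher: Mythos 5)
Your primal estimate is sound and is essentially a streamlined version of what the paper does: the paper derives $\|s-\ox\|^2\le\frac{2}{\kappa\rho}\|y_s-y\|\cdot\|\hat y-y\|$ from concavity of $y\mapsto\L(s,y,\rho)$ and then removes the $\|y_s-y\|$ factor with the same prox-nonexpansiveness trick you use inside the integral, arriving at $\|s-\ox\|=O(\|\hat y-y\|)=O(r(x,y))$ uniformly in $\rho$. The dual estimate, however, has a genuine gap. The ``refined inequality'' $\|P_M(y_s)-P_M(y)\|\le\|y_s-y\|-c\,\dist(y_s,M)^2/\|y_s-y\|$ that you invoke to break the circularity does not follow from firm nonexpansiveness: what firm nonexpansiveness gives is $\|P_Ma-P_Mb\|^2+\|(I-P_M)a-(I-P_M)b\|^2\le\|a-b\|^2$, and feeding that into your triangle decomposition only yields a bound on $\|(y_s-P_M(y_s))-(y-P_M(y))\|$, never on $\|y_s-y\|$ itself. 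The obstruction is structural: $y_s$ and $y$ may both lie arbitrarily close to $M:=M_{\ox,\psi}(0,0)$ while being far apart in a direction parallel to $M$, and in that configuration $\|P_M(y_s)-P_M(y)\|$ is of the same order as $\|y_s-y\|$, so no amount of information about $\dist(y_s,M)$ and $\dist(y,M)$ can close the absorption argument. Your unresolved item (i) compounds this: verifying that $(s,y_s)$ lies in the neighborhood where the error bound of Theorem~\ref{error} applies already requires a $\rho$-independent bound on $\|y_s-y\|$, which is precisely what is being proved.

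The paper rules out the ``parallel drift'' by an entirely different mechanism, and this is the part you would need to reconstruct. Arguing by contradiction and normalizing by $\beta_k=\|s^k-x^k\|+\|q^k-y^k\|$, it first shows $\rho_k\|s^k-\ox\|^2=o(\beta_k^2)$, and then combines three ingredients: the variational inequality $0\in\nabla\ph(s^k)+\nabla\Phi(s^k)^*q^k+N_\Th(s^k)$ for the subproblem, the KKT inclusion at $\ox$ for $\tilde y^k=P_M(y^k)$, and the monotonicity of $\partial g$ applied to the pair $q^k\in\partial g\big(\Phi(s^k)-\rho_k^{-1}(q^k-y^k)\big)$ and $\tilde y^k\in\partial g(\Phi(\ox))$. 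Adding the resulting inequalities cancels the terms $\la q^k-\tilde y^k,\nabla\Phi(\ox)(s^k-\ox)\ra$ and isolates $\rho_k^{-1}\|q^k-y^k\|^2$ on the left, giving $\|q^k-y^k\|^2\le(1+\|q^k\|+\|q^k-\tilde y^k\|)\,O(\rho_k\|s^k-\ox\|^2)+\|y^k-\tilde y^k\|\cdot\|q^k-y^k\|=o(\beta_k^2)$; this is the quantitative control in the direction parallel to $M$ that the projection decomposition cannot supply. (The case where $\rho_k$ or $\rho_k/\beta_k$ stays bounded is handled separately by the elementary prox estimate you already have.) Without this monotonicity step, or an equivalent substitute, the dual bound $\|y_s-y\|\le\hat c\,r(x,y)$ is not established.
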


\begin{proof} Since the SOSC in \eqref{sosc1} holds, it follows from lower semicontinuity of $\d^2g (\Phi(\bar x),\oy)$ therein (cf. \cite[Proposition~13.5]{rw}) that there is a positive constant $\ell$
for which the SOSC in \eqref{sosc} is satisfied. By Theorem~\ref{thm:uqgc}, we find    positive  constants $\bar\rho$, $\gamma$, and $\kappa$ for which 
the uniform quadratic  growth condition in \eqref{uqgc} holds  for all $x\in \Th\cap\B_\gamma(\ox)$, all $y\in M_{\ox,\psi}(0,0)\cap\B_\gamma(\oy)$, and all $\rho\ge\bar\rho$. 
According to Proposition~\ref{solv}, the solution mapping  $S_\rho$ enjoys  the uniform isolated calmness property in \eqref{pt9} and   $S_\rho(\ty)\subset  \inte  \B_{\hat\gamma}(\ox)$ for 
all $\ty\in \B_{\hat\gamma/(2\hat\ell)}(\oy)$ and all $\rho\ge \bar\rho$, where both positive constants $\hat\ell$ and $\hat\gg$ are taken from this proposition. 
Thus,  for every $\ty\in \B_{\hat\gamma/(2\hat\ell)}(\oy)$ and every $\rho\ge \bar\rho$, 
any optimal solution $\z$ to \eqref{regp} satisfies the first-order optimality condition 
\begin{equation}\label{optc}
0\in \nabla_x\L(\z, \ty, \rho)+ N_\Th(\z).
\end{equation}
To justify \eqref{est0},  assume by contradiction that it does not hold. Thus, we find  a sequence $\{(\xk, y^k, \rok)\b\subset \X \times \Y\times [\bar\rho, \infty)$ 
with $(\xk,y^k) \to( \ox,\oy)$   and an optimal solution $\z^k$  to \eqref{regp} associated with $(y, \rho)=(y^k, \rok)$ such that
\begin{equation}\label{est1}
\Vert \z^k-\xk\Vert+\| q^k-y^k\|>k\, r_k\quad \mbox{with}\;\;q^k:=y^k+\rok\nabla_y \L(s^k, y^k, \rok),
\end{equation} 
 where $r_k:=r(\xk, y^k)>0$. The latter particularly tells us that $r_k$ is finite, which  yields $\xk\in \Th$ for all $k$ due to \eqref{res}.  
Denoting by $\beta_k$ the left-hand side of \eqref{est1}, we get    $r_k=o(\beta_k)$.
The definition of the residual function $r$ in \eqref{res} then leads us to 
\begin{equation}\label{est2}
-\nabla_xL(\xk, y^k)+o(\xik)\in N_\Th(\xk)\quad\textrm{and}\quad \Phi(\xk)+o(\xik)= {\rm{prox}}_g\big(\Phi (\xk)+y^k\big).
\end{equation}
Using the definition of $\beta_k$ and passing to a subsequence, if necessary, we can find $(\zeta,\eta)\in \X\times \Y$ such that 
\begin{equation}\label{est4}
\dfrac{\z^k-\xk}{\xik}\to \zeta\quad \mbox{and}\quad  \dfrac{ q^k-y^k}{\xik} \to  \eta \quad \mbox{with}\;\; (\zeta,\eta)\neq 0.
\end{equation}
Since the set of Lagrange multiplier $M_{\ox,\psi}(0,0)$ from \eqref{mulmap} is convex and closed, $P_{M_{\ox,\psi}(0,0))}(y^k)$  exists and is a singleton. Set $\ty^k:=P_{M_{\ox,\psi}(0,0))}(y^k)$.
By Theorem~\ref{error},  the estimate in \eqref{eb} is satisfied, which allows us to conclude that $\xk-\ox=O(r_k)$ and $y^k-\ty^k=O(r_k)$  for all $k$ sufficiently large. Thus, we have 
\begin{equation}\label{est3}
\xk-\ox=o(\xik)\quad\textrm{and}\quad y^k-\ty^k=o(\xik)\quad\textrm{as } k\to\infty,
\end{equation}
which together  with $y^k\to \oy$ yields $\ty^k\to \oy$  and hence $\ty^k\in M_{\ox,\psi}(0,0)\cap\B_\gamma(\oy)$ for all $k $ sufficiently large. 
Combining the latter with  $\z^k\in \Th\cap \B_\gamma(\ox)$, $\rho_k\ge\bar\rho$, and \eqref{uqgc} brings us to 
\begin{align} 
\Vert \z^k-\ox\Vert^2&\le\frac{2}{\kappa}\big (\L(\z^k, \ty^k, \rok) - \L(\ox, \ty^k, \rok)\big)\nonumber\\
&\le\frac{2}{\kappa}\big(\L(\z^k, y^k, \rok)+\la \nabla_y\L(\z^k, y^k, \rok), \ty^k-y^k\ra - \L(\ox, \ty^k, \rok)\big),\nonumber
\end{align}
where   the last  inequality results from the fact that the mapping $y\mapsto \L(\z^k, y, \rok) $ is ${\cal C}^1$ and concave; see \cite[Exercise~11.56]{rw}. 
Since $\z^k\in S_\rok(y^k)$, we can conclude from the definition of the augmented Lagrangian function in \eqref{aug} 
that 
$$
\L(\z^k, y^k, \rok)\le \L(\ox, y^k, \rok) \le \ph(\ox)+ g(\Phi(\ox))= \L(\ox, \ty^k, \rok),
$$
where the equality comes from Proposition~\ref{fopag}(a) and $\ty^k\in M_{\ox,\psi}(0,0)$. Combining the last two estimates and using then \eqref{est1} lead us to 
\begin{equation}\label{est5}
\Vert \z^k-\ox\Vert^2 \le  \frac{2}{\kappa\rho_k} \langle q^k-y^k,\ty^k-y^k \rangle\le\dfrac{2}{\kappa\rok}\| q^k-y^k\|\cdot\Vert\ty^k-y^k\Vert.
\end{equation}
{\bf Claim I}. We have $\z^k-\ox=o(\beta_k)$, $\rok\|\z^k-\ox\|^2=o(\beta_k)$, and $\rok\|\z^k-\ox\|^2=o(\beta_k^2)$.

To prove the first estimate, we use \eqref{est5} together with \eqref{est4}-\eqref{est3} and $\rok\ge \bar\rho$ to obtain
$$
\dfrac{\Vert \z^k-\ox\Vert^2}{\xik^2}\leq\dfrac{2}{\kappa\rok} \dfrac{\| q^k-y^k\|}{\xik} \cdot\frac{\Vert \ty^k-y^k\Vert}{\xik}\le  \dfrac{2}{\kappa\bar \rho} \dfrac{\| q^k-y^k\|}{\xik} \cdot\frac{\Vert \ty^k-y^k\Vert}{\xik} \to 0.
$$
The remaining estimates can be verified by a similar argument via \eqref{est4}-\eqref{est5}.  \\
{\bf Claim II}. We have $s^k\to \ox$ as $k\to \infty$. 

To prove this claim, we infer from  $\ty^k\in M_{\ox,\psi}(0,0)$ that  $\ty^k \in \partial g(\Phi(\ox))$, and hence  $\Phi(\ox) =\prox\big(\Phi(\ox)+\rho^{-1}\ty^k\big)$ for any $\rho>0$ due to  \eqref{prm}. 
 Thus, we derive from  \eqref{est1}, \eqref{est5}, \eqref{ueq19}, and  $\rok\ge \bar\rho$ that 
\begin{align}
\Vert \z^k-\ox\Vert^2 & \le\dfrac{2}{\kappa }\|  \Phi(\z^k)-\proxk\big(\Phi(\z^k)+\rho_k^{-1}y^k\big)\|\cdot\Vert\ty^k-y^k\Vert\nonumber\\
&=\frac{2}{\kappa}\big\|\Phi(\z^k) -\Phi(\ox) +\proxk\big(\Phi(\ox)+\rho_k^{-1}\ty^k\big)-\proxk\big(\Phi(\z^k)+\rho_k^{-1}y^k\big)\big\| \cdot\Vert\ty^k-y^k\Vert \nonumber\\
&\leq \frac{2}{\kappa}\big(2\|\Phi(s^k)-\Phi(\ox)\|+(\bar\rho)^{-1}\|\ty^k-y^k\|\big)\Vert\ty^k-y^k\Vert\nonumber\\
&=\big(O (\|\z^k-\ox\| )+ \tfrac{2}{\kappa}(\bar\rho)^{-1}\Vert \ty^k-y^k\Vert \big)\Vert\ty^k-y^k\Vert, \label{sk0}
\end{align} 
 where the inequality is due to the fact that the proximal mapping $\prox$ is nonexpansive (cf. \cite[Proposition~12.19]{rw}). 
 Since $\|s^k-\ox\|$ is bounded and  $\|y^k-\ty^k\|\to 0$, we deduce from \eqref{sk0} that $\Vert \z^k-\ox\Vert\to 0$, which proves the claim.
 
 To proceed, we use the first estimate in Claim I and \eqref{est3} to get
\begin{equation*}
\zeta=\lim_{k\to\infty}\dfrac{\z^k-\xk}{\xik}=\lim_{k\to\infty}\dfrac{s^k-\ox}{\xik}-\lim_{k\to\infty}\dfrac{\xk-\ox}{\xik}=0-0=0.
\end{equation*}
Our goal is to show that $\eta=0$, which together with $\zeta=0$ leads us to a contradiction with \eqref{est4}. To this end, we proceed with considering two cases. Assume first that 
  either $\{\rok\b$ or $\{{\rok}/{\xik}\b$ is bounded. 
Using a similar argument as the one for \eqref{sk0}, we obtain via \eqref{est1} and \eqref{ueq19} that 
\begin{align*}
 \dfrac{ \| q^k-y^k\|}{\xik} =\frac{\rok}{\xik} \big\Vert \Phi(\z^k)-\proxk\big(\Phi(\z^k)+\rho_k^{-1} y^k\big)\big\Vert \le O\big(\frac{\rok}{\xik}\|\z^k-\ox\|\big)+\dfrac{\Vert \ty^k-y^k\Vert }{\xik}.  
\end{align*} 
It also follows  from $s^k-\ox=o(\beta_k)$ that $ {\rok}\Vert \z^k-\ox\Vert / \xik \to0$. Using this and \eqref{est3} and passing to the limit in the inequality above, we get  $\eta =0$, a contradiction with \eqref{est4}.

Assume now that both sequences $\{\rok\b$ and  $\{{\rok}/{\xik}\b$  are unbounded.  We can assume by passing  to a subsequence if necessary that 
\begin{equation}\label{unb}
\rok\to \infty\quad \mbox{and}\quad \frac{\rok}{\beta_k}\to \infty\;\;\mbox{as}\;\; k\to \infty.
\end{equation}
Since $\z^k$ is an optimal solution to \eqref{regp} associated with $(y^k,\rok)$, we deduce from \eqref{optc} that 
\begin{equation*}
0\in \nabla_x\L(\z^k, y^k, \rok)+ N_\Th(\z^k)= \nabla \varphi(\z^k) +\nabla \Phi(\z^k)^*q^k+N_\Th(\z^k),
\end{equation*}
 where the equality comes from \eqref{ueq19} and  the definition of $q^k$ from \eqref{est1}. By the definition of the normal cone, we have
\begin{align}
0\le&\; \la \nabla \varphi(\z^k) +\nabla \Phi(\z^k)^*q^k, \ox-s^k\ra =\la \nabla \varphi(\z^k), \ox-s^k\ra-\la q^k,\nabla \Phi(\z^k)( s^k-\ox)\ra\nonumber\\
=&\;\la \nabla \varphi(\z^k), \ox-s^k\ra- \la q^k,\nabla\Phi(\ox)(s^k-\ox)\ra
-\la q^k,(\nabla\Phi(s^k)-\nabla\Phi(\ox))(s^k-\ox)\ra.
\label{nc1}
\end{align}
Recall also that $\ty^k\in M_{\ox,\psi}(0,0)$. By \eqref{mulmap}, the latter implies that  $\ty^k\in \sub g(\Phi(\ox))$ and 
\begin{equation*}\label{k2}
0\in \nabla_x L(\ox,\ty^k)+N_\Th(\ox)=\nabla \varphi(\ox) +\nabla \Phi(\ox)^*\ty^k +N_\Th(\ox).
\end{equation*}
Again, the definition of the normal cone leads us to 
\begin{align}
0&\le  \la \nabla \varphi(\ox) +\nabla \Phi(\ox)^*\ty^k, s^k-\ox\ra = \la \nabla \varphi(\ox), s^k-\ox\ra+\la \ty^k,\nabla \Phi(\ox)( s^k-\ox)\ra.
\label{nc2}
\end{align}
Adding both sides of \eqref{nc1} and \eqref{nc2} brings us to 
 \begin{align}
 \la q^k-\ty^k , \nabla\Phi(\ox)(s^k-\ox) \ra &\le 
 - \la \nabla \varphi(s^k)-\nabla \varphi(\ox), s^k-\ox\ra -\la q^k,(\nabla\Phi(s^k)-\nabla\Phi(\ox))(s^k-\ox)\ra\nonumber\\
 &\le  (1+ \|q^k\|)O(\|s^k-\ox\|^2).
  \label{nc3}
 \end{align}
 On the other hand, it follows from \eqref{kkt2}  and  the definition of $q^k$ in \eqref{est1} that  $q^k \in \partial g(z^k)$ where $z^k:= \Phi(\z^k)- \rho_k^{-1}(q^k-y^k)$. Recall that $\ty^k\in \sub g(\Phi(\ox))$. Thus, we get from the monotonicity of $\partial g$ that
\begin{align}
 0\leq&\; \la q^k-\ty^k , \Phi(s^k)- \rho_k^{-1}(q^k-y^k)-\Phi(\ox) \ra\nonumber\\
  =&\;  \la q^k-\ty^k ,\nabla\Phi(\ox)(s^k - \ox) +O(\|s^k - \ox\|^2) -\rho_k^{-1}(q^k - y^k)\ra\nonumber\\
 \leq &\; \la q^k-\ty^k ,\nabla\Phi(\ox)(s^k - \ox)\ra -\rho_k^{-1}\|q^k - y^k\|^2\nonumber\\
 &+ \|q^k - \ty^k\|O(\|s^k - \ox\|^2)+ \rok^{-1}\|y^k - \ty^k\|\cdot\|q^k-y^k\|.\label{nc4}
  \end{align}
Combining \eqref{nc3} and \eqref{nc4} brings us to 
\begin{align}
  \|q^k-y^k\|^2
  \le&\;  \rok(1+\|q^k\|  +\|q^k-\ty^k\|)O(\|s^k-\ox\|^2)+ \|y^k - \ty^k\|\cdot\|q^k-y^k\|\nonumber\\
  \le&\; (1+\|y^k\|)O(\rok\|s^k-\ox\|^2)+(2\|q^k-y^k\|+\|y^k-\ty^k\|)O(\rok\|s^k-\ox\|^2)\nonumber\\
  &+\|y^k - \ty^k\|\cdot\|q^k-y^k\|\nonumber\\
  =&\; o(\beta_k^2)+O(\beta_k)o(\beta_k)+ o(\beta_k)O(\beta_k) = o(\beta_k^2),\nonumber
  \end{align}
  where the last estimate comes from Claim I, \eqref{est4}, \eqref{est3}, and boundedness of the sequence $\{y^k\}$. Dividing both sides by $\beta_k^2$ and passing then to the limit tell us via \eqref{est4} that $\|\eta\|^2=0$, implying that 
  $\eta=0$. This is a contradiction with $(\zeta,\eta)\neq (0,0)$ and thus completes the proof.
\end{proof} 

To finish our discussion about assumption (b) in Theorem~\ref{thm:fischer}, we should point out that one more step is required to be taken.
Comparing the right-hand side of \eqref{est0} with the  estimate in assumption (b) indicates that the residual function $r$ in \eqref{est0}  should be replaced with the distance function to the solution set of the KKT system in \eqref{vs}.
That requires to assume that $\Th$ in \eqref{comp}   be an affine set, as recorded below.

\begin{Proposition}  \label{erct}  Assume that $(\ox,\oy)$ is a solution to the KKT system \eqref{vs} with $\Th$ therein being an affine set. Then 
there exist a positive constant   $\kappa$ and a neighborhood $U$ of $(\ox,\oy)$ such that
\begin{equation}\label{eb1}
r(x, \lambda) \leq \kappa\big(\|x-\ox\|+\dist(y, M_{\ox,\psi}(0,0)))\big)\quad\textrm{for all }\;\; (x, y)\in U,
\end{equation} 
where $M_{\ox,\psi}(0,0)$ is defined in \eqref{mulmap}.
\end{Proposition}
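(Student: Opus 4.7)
The plan is to project $y$ onto the (closed, convex) set of Lagrange multipliers and then exploit the KKT identities at the projection, together with the constancy of the normal cone to an affine $\Th$, to convert each term of $r$ into a Lipschitz difference. Let $\tilde y := P_{M_{\ox,\psi}(0,0)}(y)$, so that $\|y - \tilde y\| = \dist(y, M_{\ox,\psi}(0,0))$. Since $\tilde y \in M_{\ox,\psi}(0,0)$, the pair $(\ox, \tilde y)$ is a solution to the KKT system \eqref{vs}; in particular $-\nabla_x L(\ox, \tilde y) \in N_\Th(\ox)$, and $\tilde y \in \sub g(\Phi(\ox))$ is equivalent via the first identity in \eqref{prm} to $\Phi(\ox) = {\rm prox}_g(\Phi(\ox) + \tilde y)$.

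For the first summand of $r$, affineness of $\Th$ gives $N_\Th(x) = (\para\Th)^\perp = N_\Th(\ox)$ for every $x \in \Th$ (for $x \notin \Th$ this summand is $+\infty$, so we restrict attention to $x \in \Th$). Combining this constancy with the local Lipschitz continuity of $\nabla_x L(x,y) = \nabla\ph(x) + \nabla\Phi(x)^* y$ on a neighborhood of $(\ox, \oy)$, available thanks to ${\cal C}^2$-smoothness of $\ph$ and $\Phi$, yields
\begin{equation*}
\dist\bigl(-\nabla_x L(x,y), N_\Th(x)\bigr) \le \bigl\|\nabla_x L(x,y) - \nabla_x L(\ox, \tilde y)\bigr\| \le L_1\bigl(\|x - \ox\| + \|y - \tilde y\|\bigr).
\end{equation*}
For the prox term, I would anchor at $\Phi(\ox) = {\rm prox}_g(\Phi(\ox) + \tilde y)$ and use the triangle inequality together with nonexpansiveness of ${\rm prox}_g$ (cf.\ \cite[Proposition~12.19]{rw}):
\begin{align*}
\bigl\|\Phi(x) - {\rm prox}_g(\Phi(x) + y)\bigr\| &\le \|\Phi(x) - \Phi(\ox)\| + \bigl\|{\rm prox}_g(\Phi(\ox) + \tilde y) - {\rm prox}_g(\Phi(x) + y)\bigr\| \\
&\le 2\|\Phi(x) - \Phi(\ox)\| + \|y - \tilde y\| \le L_2\bigl(\|x - \ox\| + \|y - \tilde y\|\bigr),
\end{align*}
by local Lipschitz continuity of $\Phi$ near $\ox$.

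Summing the two estimates and taking $\kappa := L_1 + L_2$ produces \eqref{eb1} on a sufficiently small neighborhood $U$ of $(\ox, \oy)$. There is no serious obstacle here: the content of the proposition is simply the combination of the KKT equations at the projected multiplier $\tilde y$ with the observation that affineness of $\Th$ freezes $N_\Th(\cdot)$ to the constant subspace $N_\Th(\ox)$---the latter is precisely what can fail for a general polyhedral $\Th$ (where $N_\Th(x)$ may be strictly smaller than $N_\Th(\ox)$ nearby) and is exactly why the affineness assumption is imposed in the hypothesis.
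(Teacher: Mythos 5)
Your proof is correct and is essentially the standard argument that the paper itself omits (it merely cites the CPLQ version of this estimate and notes the convexity of $g$ suffices): project $y$ onto $M_{\ox,\psi}(0,0)$, use the KKT identities at $(\ox,\tilde y)$, the constancy $N_\Th(x)=N_\Th(\ox)$ on the affine set $\Th$, nonexpansiveness of ${\rm prox}_g$, and local Lipschitz continuity of $\nabla\ph$, $\nabla\Phi$, and $\Phi$. Your parenthetical restriction to $x\in\Th$ (where the first summand of $r$ is finite) and your remark on why affineness—rather than mere polyhedrality—is needed are both accurate and match how the estimate is actually invoked later in the paper.
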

\begin{proof} The claimed estimate  was recently justified for the KKT system in \eqref{vs} with $g$ being CPLQ. Its proof, however, did not use the latter condition on $g$
and works for any convex function $g$. Thus, we omit the proof and refer the readers to the latter result.
\end{proof}

What remains is to show that assumption (c) of Theorem~\ref{thm:fischer} holds automatically for the inexact ALM proposed in Algorithm~\ref{Alg1}. That will be done inside the proof of the next result  in which we establish  the wellposedness and local convergence of our inexact ALM. 

\begin{Theorem}\label{lcon}
Assume that  $(\ox, \oy)$ is a solution to the KKT system in \eqref{vs} for which all the assumptions in Theorem~{\rm\ref{error}} hold and that $\Th$ in \eqref{comp} is an affine set. 
Assume further that the second subderivative of $g$ at $\Phi(\ox)$ for $\oy$ is semi-stable. 
Then, there exist positive constants $\hat c,\, \bar\rho$, and $\varepsilon_0$ such that for any starting point $(x^0, y^0)\in \B_{\varepsilon_0}(\ox, \oy) $ and any sequence $\{\rok\}_{k\in \N}$ with $\rok\geq \bar\rho$, there is a primal-dual sequence $\{(\xk, y^k)\}_{k\in\N}$ satisfying \eqref{xkk} with $\ek = o(r(\xk, y^k))$ and the estimate \eqref{est7}, where $r$ is the residual function defined by \eqref{res}. Moreover, any such a sequence converges to $(\ox, \widehat y)$ for some $\widehat y\in M_{\ox,\psi}(0,0)$, and the rates of convergence of $\{(\xk, y^k)\b$ to $(\ox, \widehat y)$ and of $\{\dist\big((\xk, y^k), \{\ox\}\times M_{\ox,\psi}(0,0)\big)\b$ to zero are Q-linear. Furthermore, if $\rok\to\infty$, the rates of convergence of both sequences are Q-superlinear.
\end{Theorem}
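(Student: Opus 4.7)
The plan is to apply Theorem~\ref{thm:fischer} to the generalized equation \eqref{geco} with single-valued part $\Psi$ and set-valued part $G$ from \eqref{ALMge}, approximation mapping $\A$ from \eqref{A}, parameter space $\P:=\R_+\times[\bar\rho,\infty)$, and reference point $\ou=(\ox,\oy)$, and then check that any primal-dual sequence generated by Algorithm~\ref{Alg1} fits Fischer's selection rule \eqref{ukk}. Because all hypotheses of Theorem~\ref{error} are in force, the error bound \eqref{eb} holds, so Proposition~\ref{casol} furnishes the calmness of the solution mapping $\s$ from \eqref{pkkt} at $0$ for $\ou$ with some constant $\ell_1>0$, and $\s(0)$ coincides locally with the closed convex set $\{\ox\}\times M_{\ox,\psi}(0,0)$. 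This verifies assumption (a).

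For assumption (b), given $\tu=(\tx,\ty)$ close to $\ou$ and $(\epsilon,\rho)\in\P$ with $\bar\rho$ taken at least as large as the one from Proposition~\ref{solv}, I would take any $s\in S_\rho(\ty)$ (nonempty and contained in $\inte\B_{\hat\gg}(\ox)$ by that proposition) and set $y_s:=\ty+\rho\nabla_y\L(s,\ty,\rho)$. The first-order optimality condition for $s$ combined with the identity \eqref{kkt2} shows that $(0,0)\in\A(s,y_s,\tx,\ty,\epsilon,\rho)+G(s,y_s)$ for every $\epsilon\ge 0$. Proposition~\ref{est} yields $\|s-\tx\|+\|y_s-\ty\|\le c_1 r(\tx,\ty)$, while the affineness of $\Th$ allows invocation of Proposition~\ref{erct} to obtain $r(\tx,\ty)\le c_2\big(\|\tx-\ox\|+\dist(\ty,M_{\ox,\psi}(0,0))\big)\le c_3\,\dist(\tu,\s(0))$. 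Combining these shows that the localized solution set in Theorem~\ref{thm:fischer}(b) is nonempty for some uniform constant $c>0$.

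For assumption (c), any $w\in\Psi(u)-\A(u,\tu,\epsilon,\rho)$ satisfies $\|w\|\le\epsilon+\rho^{-1}\|y-\ty\|$. For $u$ with $\|u-\tu\|\le c\,\dist(\tu,\s(0))$, the second term is bounded by $(c/\rho)\dist(\tu,\s(0))$, while Algorithm~\ref{Alg1}'s choice $\epsilon=\epsilon(r(\tx,\ty))=o(r(\tx,\ty))$ together with Proposition~\ref{erct} gives $\epsilon\le\theta(\dist(\tu,\s(0)))\cdot\dist(\tu,\s(0))$ with $\theta(t)\to 0$ as $t\searrow 0$. Setting $\omega(u,\tu,p):=c/\rho+\theta(\dist(\tu,\s(0)))$ and enlarging $\bar\rho$ and shrinking the neighborhood so that $\omega<1/\ell_1$ uniformly yields assumption (c). Moreover, every sequence $\{(\xk,y^k)\}$ produced by Algorithm~\ref{Alg1} satisfies \eqref{est7}, which combined with Proposition~\ref{erct} gives $\|u^{k+1}-u^k\|\le c'\,\dist(u^k,\s(0))$, so such sequences meet Fischer's selection rule \eqref{ukk}.

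All three assumptions verified and the selection rule fulfilled, Theorem~\ref{thm:fischer} delivers $\varepsilon_0>0$ such that from any starting point in $\B_{\varepsilon_0}(\ou)$ the primal-dual sequence $\{(\xk,y^k)\}$ converges Q-linearly to some $\widehat u=(\ox,\widehat y)\in\s(0)=\{\ox\}\times M_{\ox,\psi}(0,0)$. The most delicate bookkeeping step is the interplay between the residual $r(x,y)$ and the geometric quantity $\dist(\cdot,\s(0))$; this is precisely where both the affineness of $\Th$ (needed for Proposition~\ref{erct}) and the semi-stability of the second subderivative (needed for Proposition~\ref{est}) enter in an essential way. Finally, if $\rok\to\infty$ then $c/\rok\to 0$, and since $\dist(u^k,\s(0))\to 0$ also $\theta(\dist(u^k,\s(0)))\to 0$; hence $\omega(u^{k+1},u^k,p_k)\to 0$, and the final clause of Theorem~\ref{thm:fischer} upgrades the rate to Q-superlinear.
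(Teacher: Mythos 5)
Your proposal is correct and follows essentially the same route as the paper's proof: assumption (a) via the error bound of Theorem~\ref{error} combined with Proposition~\ref{casol}, assumption (b) via Propositions~\ref{solv}, \ref{est} and \ref{erct} (the affineness of $\Th$ entering exactly where you say), and assumption (c) via the tolerance choice $\ek=o(r(\xk,y^k))$ and the bound $\|w\|\le\epsilon+\rho^{-1}\|y-\ty\|$, before invoking Theorem~\ref{thm:fischer}. The only detail you gloss over is that Proposition~\ref{est} is stated only for points with $r(x,y)>0$, so the degenerate case $r(\tx,\ty)=0$ must be handled separately (there one takes $s=\tx$ and $y_s=\ty$, which already solve the KKT system); this is exactly how the paper completes the verification of assumption (b).
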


\begin{proof} To justify the claims, we need to show that assumptions (a)-(c) in Theorem~\ref{thm:fischer} are satisfied. We begin with assumption (a), namely the calmness of the solution mapping $\s$ from \eqref{pkkt}
at $(0,0)$ for $(\ox,\oy)$. To achieve it, observe from Theorem~\ref{error} that the error-bound estimate in \eqref{eb} holds. Take the neighborhood $U$ of $(\ox,\oy)$ from \eqref{eb} and choose $\dd>0$ such that $\B_\dd(\ox,\oy)\subset U$.
By \eqref{eb}, we get 
\begin{equation}\label{fsol}
\s(0,0)\cap \B_\dd(\ox,\oy)=\big(\{\ox\}\times M_{\ox,\psi}(0,0)\big)\cap \B_\dd(\ox,\oy).
\end{equation}
 Moreover, it follows from Proposition~\ref{casol}
that there are positive constants $\dd'$ and $\ell$ for which \eqref{casol2} holds. Shrinking $\dd'$ if necessary, we can assume that $\max\{\dd',\ell\dd'\}\le \dd/2$. Thus, it results from \eqref{casol2} that
\begin{align*}
\s(v, w)\cap \B_{\dd'}(\ox,\oy) &\subset \Big(\big(\{\ox\}\times M_{\ox,\psi}(0,0)\big) +\ell\, \|(v,w)\|\B\Big)\cap \B_{\dd/2}(\ox,\oy)\\
&\subset (\big(\{\ox\}\times M_{\ox,\psi}(0,0)\big) \cap  \B_{\dd}(\ox,\oy)+\ell\, \|(v,w)\|\B\\
&= \s(0,0)\cap \B_\dd(\ox,\oy)+ \ell\, \|(v,w)\|\B\subset   \s(0,0)+ \ell\, \|(v,w)\|\B,
\end{align*}
for all $(v,w)\in \dd'\B$, which confirms the calmness of  $\s$   at $(0,0)$ for $(\ox,\oy)$.

We turn next  to assumption (b) in Theorem~\ref{thm:fischer}. Take the positive constants $\hat \ve$ and $\hat c$ from Proposition~\ref{est} and assume without loss of generality that 
$\B_{\hat \ve}(\ox,\oy)\subset U$ and $\hat \ve\le 2\dd$, where $U$ is taken from \eqref{eb1} and $\dd$ comes from \eqref{fsol}. Combining now the estimates in \eqref{est0} and \eqref{eb1} implies that 
\begin{align*}
\Vert \z-x\Vert+ \Vert y_s-y \Vert\le\hat c\,r(x,y) &\le \hat c \kappa\big(\|x-\ox\|+\dist(y, M_{\ox,\psi}(0,0))\big)\\
&\le 2\hat c \kappa\dist((x,y), \{\ox\}\times  M_{\ox,\psi}(0,0))
\end{align*}
for all $(x,y)\in \B_{\hat \ve}(\ox,\oy)$ such that $r(x,y)>0$. If  $(x,y)\in \B_{\hat \ve}(\ox,\oy)$ and $r(x,y)=0$, one can   set $s:=x$, $y_s:=y$ and observes that $(s,y_s)$ is a solution to the KKT system in \eqref{vs}.
It then follows from Proposition~\ref{fopag}(b) that $s$ and $y_s$  satisfy  \eqref{xkk} and \eqref{kkt2}, respectively. Combining this with \eqref{fsol} allows us to conclude  that for any $(x,y)\in \B_{\hat \ve}(\ox,\oy)$
there is a pair $(s,y_s)$ satisfying  \eqref{xkk},  \eqref{kkt2},  and the estimate 
$$
\Vert \z-x\Vert+ \Vert y_s-y \Vert\le 2\hat c \kappa\dist((x,y), \s(0,0)),
$$
which confirms the validity of assumption (b) in Theorem~\ref{thm:fischer}.

Finally, to justify assumption (c) in Theorem~\ref{thm:fischer}, pick the positive constant  $\bar \rho$ from  Proposition~\ref{est},  take any $(\tx, \ty)\in \B_{\hat\ve}(\ox, \oy)$, any $(x, y)\in \X\times \Y$ 
with $\|(x, y)-(\tx, \ty)\|\leq  \hat c\,\dist((x,y), \s(0,0))$, and any $\rho\ge \bar \rho$. Choose also the tolerance parameter $\epsilon$ such that $\epsilon=o(r(x, y))$. For any  
 $(w_1, w_2) \in \Psi(x, y) - \A(x, y, \tx, \ty, \epsilon, \rho)$, where $\Psi$ and $\A$ are taken from  \eqref{ALMge} and \eqref{A},  
we find $b\in \B$ such that  
\begin{align*} 
\|(w_1, w_2)\|&=\|(\epsilon b, \rho^{-1}(y -\ty)\| \leq   \epsilon + \rho^{-1}\|y -\ty\|\\
&\le  o(r(x, y))+  \rho^{-1}\hat c\,\dist((x,y), \s(0,0))\\
&=\frac{o(r(x, y))}{r(x,y)} r(x,y)+ \rho^{-1}\hat c\,\dist((x,y), \s(0,0))\\
&\le \frac{o(r(x, y))}{r(x,y)}  \kappa\big(\|x-\ox\|+\dist(y, M_{\ox,\psi}(0,0))\big)   + \rho^{-1}\hat c\,\dist((x,y), \s(0,0))\\
&\le \frac{o(r(x, y))}{r(x,y)}  2  \kappa\big(\dist((x,y), \{\ox\}\times  M_{\ox,\psi}(0,0))\big)   + \rho^{-1}\hat c\,\dist((x,y), \s(0,0))\\
&=\big( \frac{2  \kappa \, o(r(x, y))}{r(x,y)} + \rho^{-1}\hat c\big)\dist((x,y), \s(0,0)),
\end{align*}
where the second inequality results from \eqref{eb1} and the last equality comes from \eqref{fsol}. Defining the function $\omega:\X\times \Y\times \X\times \Y\times (0,\infty)\to \R_+$
by $\omega(x,y,\tx,\ty,\rho)=  {2  \kappa \, o(r(x, y))}/{r(x,y)} + \rho^{-1}\hat c$, we obtain the second estimate in assumption (c) in Theorem~\ref{thm:fischer}. Since $\omega(x,y,\tx,\ty,\rho)\to 0$
as $(x,y,\tx,\ty,\rho)\to (\ox,\oy,\ox,\oy,\infty)$, the first estimate in assumption (c) also holds. Appealing now to the latter theorem proves all of convergence claims and hence completes the proof.
\end{proof}

We close this section by commenting on the established local convergence in Theorem~\ref{lcon}. For the case of NLPs, this result reduces to \cite[Theorem~3.4]{fs12}, where 
such a result was achieved for the first time for NLPs. Note that the calmness of multiplier mapping, assumed in Theorem~\ref{lcon},   is automatically satisfied by Proposition~\ref{cmpr} for NLPs.
When  g in \eqref{comp} is CPLQ, Theorem~\ref{lcon} covers our recent result in \cite[Theorem~5.7]{HaS21}. When $g=\dd_{\S^n_+}$ and $\Th=\S^n$ in \eqref{comp}, which allow us to cover SDPs, it was recently shown in \cite[Theorem~2]{ding} that if the multiplier $\oy$ is taken from the relative interior of the set of Lagrange multipliers  and the SOSC is satisfied, the inexact version 
of the ALM is Q-linearly convergent. Using a stronger version of the SOSC, the authors in \cite{ding2} obtained the R-linear convergence for   the primal sequence and Q-linear convergence for 
the dual sequence of the ALM without assuming the strict complementarity condition.  Theorem~\ref{lcon} improves all  these results for nonlinear SDPs  by demonstrating that the SOSC alone 
suffices to achieve the Q-linear convergence of the primal-dual sequence for the inexact ALM. Finally, we should add that local convergence of the ALM was established recently in \cite[Theorem~3.1]{r22}
under the strong variational convexity, a condition that is equivalent to the strong second-order condition for NLPs, for the composite optimization problem in \eqref{comp} with $\Th=\X$. First, our analysis 
uses a weaker version of the SOSC. Indeed, the strong version of the SOSC amounts to assuming the classical SOSC in a neighborhood of a solution to the KKT system, which is much stronger than the version 
we used in this paper. Second, while we establish Q-linear convergence of the primal-dual sequence from the ALM, the latter result in \cite{r22} only presents the R-linear convergence of the primal sequence 
and Q-linear convergence of the dual sequence from this method. It is not hard to see that our  primal-dual Q-linear convergence implies the R-linear convergence of the primal and dual sequences in the ALM.

%

\section{Appendix} \label{appe}

In this section, we aim to justify two independent results from our local convergence analysis of the ALM, which  play an important role in the process of justifying some results in Sections~\ref{calm} and \ref{sssub}.
We begin with a  useful characterization of  the {\em outer Lipschitzian} property of the   multiplier mapping $M_{\ou,g}$ from \eqref{mpg}  via the dual  condition \eqref{duq}.

\begin{Proposition}\label{calag} 
Assume that $g:\Y\to \oR$ is ${\cal C}^2$-decomposable at $\ou\in \Y$ with representation \eqref{decom} and that $\oy\in \partial g(\ou)$ and $\omu\in M_{\ou,g}(\oy,0)$,
where  $M_{\ou,g}$ is defined by \eqref{mpg}. Then the following properties are equivalent.
\begin{enumerate}[noitemsep,topsep=2pt]
\item The set of Lagrange multipliers $M_{\ou, g}(\oy, 0)$ is a singleton and there exist constants $\ell\ge 0$ and $\ve>0$ ensuring the error bound
estimate 
\begin{equation*}\label{gf01}
\| \mu-\omu\| \le\ell\big(\|\nabla \Xi(\ou)^*\mu -\oy\|+\dist\big(\Xi(\ou),(\sub \vartheta)^{-1}(\mu)\big)\big)\;\mbox{ for all }\; \mu\in\B_\ve(\omu).
\end{equation*}

\item The dual  condition \eqref{duq}  is satisfied.

\item  There exist positive numbers $\gamma$ and $\kappa$ such that  the inclusion 
\begin{equation*} 
M_{\ou,g}(y,w)\cap \B_\gamma(\omu) \subset\{\omu\}+\kappa(\Vert y-\oy\Vert+\Vert w\Vert)\B\;\mbox{ for all  }\;(y,w)\in\B_{\gamma}(\oy,0)
\end{equation*}
holds.

\item  There exist positive numbers $\gamma$ and $\kappa$ such that  the inclusion 
\begin{equation*}\label{pt1}
M_{\ou,g}(y,w)  \subset\{\omu\}+\kappa(\Vert y-\oy\Vert+\Vert w\Vert)\B\;\mbox{ for all  }\;(y,w)\in\B_{\gamma}(\oy,0)
\end{equation*}
holds.
\end{enumerate}
\end{Proposition}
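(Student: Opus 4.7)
The approach is to prove the cycle $(d) \Rightarrow (c) \Rightarrow (a) \Rightarrow (b) \Rightarrow (d)$. The implication $(d) \Rightarrow (c)$ is obtained by intersecting the right-hand side in (d) with $\B_\gg(\omu)$. For $(c) \Rightarrow (a)$, setting $(y,w) = (\oy, 0)$ in (c) makes $\omu$ an isolated point of $M_{\ou, g}(\oy, 0)$. Since that set is the intersection of the affine subspace $\{\mu\,|\,\nabla\Xi(\ou)^*\mu = \oy\}$ with the convex set $\sub\vt(\Xi(\ou))$, it is itself convex, and any convex set with an isolated point is a singleton. For the error bound, given $\mu$ close to $\omu$, set $y:=\nabla\Xi(\ou)^*\mu$ and select $w$ with $\mu\in\sub\vt(\Xi(\ou)+w)$ and $\|w\|\le 2\,\dist(\Xi(\ou),(\sub\vt)^{-1}(\mu))$; then $\mu\in M_{\ou,g}(y,w)$, and property (c) yields the claimed estimate with $\ell = 2\kappa$.

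For $(a) \Rightarrow (b)$, I argue by contradiction. Let $\zeta\neq 0$ belong to $D(\sub\vt)(\Xi(\ou), \omu)(0) \cap \ker\nabla\Xi(\ou)^*$, so that by definition of the graphical derivative there exist $t_k\searrow 0$, $\zeta_k\to\zeta$ and $w_k\to 0$ with $\omu+t_k\zeta_k\in\sub\vt(\Xi(\ou)+t_kw_k)$. Setting $\mu_k:=\omu+t_k\zeta_k$, the identity $\nabla\Xi(\ou)^*\zeta = 0$ implies $\|\nabla\Xi(\ou)^*\mu_k-\oy\| = t_k\|\nabla\Xi(\ou)^*\zeta_k\| = o(t_k)$, while $\Xi(\ou)+t_kw_k\in(\sub\vt)^{-1}(\mu_k)$ yields $\dist(\Xi(\ou),(\sub\vt)^{-1}(\mu_k)) = o(t_k)$. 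The error bound in (a) then forces $t_k\|\zeta_k\| = \|\mu_k-\omu\| = o(t_k)$, contradicting $\zeta_k\to\zeta\neq 0$.

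For $(b) \Rightarrow (d)$, I proceed in two stages. A direct tangent-cone computation from \eqref{mpg} first shows
\begin{equation*}
DM_{\ou,g}\big((\oy,0),\omu\big)(0,0) = \ker\nabla\Xi(\ou)^* \cap D(\sub\vt)(\Xi(\ou),\omu)(0),
\end{equation*}
which collapses to $\{0\}$ by (b); the Levy--Rockafellar criterion for isolated calmness of set-valued mappings then delivers the local property (c). To upgrade (c) to (d), suppose by contradiction that there are $(y_k,w_k)\to(\oy,0)$ and $\mu_k\in M_{\ou,g}(y_k,w_k)$ with $\|\mu_k-\omu\|$ bounded below by some $\gg>0$. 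If $\{\mu_k\}$ has a bounded subsequence, its limit lies in $M_{\ou,g}(\oy,0)=\{\omu\}$ by the outer semicontinuity of $\sub\vt$, contradicting the lower bound. Otherwise $\{\mu_k\}$ is unbounded and a subsequence of $\mu_k/\|\mu_k\|$ converges to some $\zeta\neq 0$; Proposition~\ref{sulin}(b) gives $\mu_k\in\sub\vt(\Xi(\ou))$, so that $\zeta$ lies in the recession cone of $\sub\vt(\Xi(\ou))$, and convexity of the latter yields $(\sub\vt(\Xi(\ou)))_\infty \subset T_{\sub\vt(\Xi(\ou))}(\omu)$, which \eqref{nddu} identifies with $D(\sub\vt)(\Xi(\ou),\omu)(0)$; combined with $\nabla\Xi(\ou)^*\zeta = \lim y_k/\|\mu_k\| = 0$, this violates (b). The hard part will be this upgrading step, where the sublinearity of $\vt$ must be used through Proposition~\ref{sulin}(b) to link the behavior of $M_{\ou,g}$ at infinity to the dual condition imposed only at $\omu$.
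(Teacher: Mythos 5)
Your proof is correct and is genuinely more self-contained than the paper's. The paper proves only $(b)\Rightarrow(d)$ and the trivial $(d)\Rightarrow(c)$ explicitly, outsourcing the equivalence of (a), (b), (c) to \cite[Theorem~4.1]{ms18} (stated there for indicator functions, with the remark that the argument carries over to $\vartheta$). You instead close the cycle directly: $(c)\Rightarrow(a)$ via convexity of $M_{\ou,g}(\oy,0)$, $(a)\Rightarrow(b)$ via the sequential description of the graphical derivative, and $(b)\Rightarrow(c)$ via the Levy--Rockafellar isolated-calmness criterion applied to $DM_{\ou,g}((\oy,0),\omu)(0,0)$. For the upgrade from (c) to (d) your contradiction argument parallels the paper's---both hinge on ruling out an unbounded sequence of multipliers---but you place the limiting direction $\zeta$ in $D(\sub\vartheta)(\Xi(\ou),\omu)(0)\cap\ker\nabla\Xi(\ou)^*$ directly, using the recession cone of $\sub\vartheta(\Xi(\ou))$ and the identity $T_{\sub\vartheta(\Xi(\ou))}(\omu)=D(\sub\vartheta)(\Xi(\ou),\omu)(0)$ from \eqref{nddu}, so you contradict \eqref{duq} itself; the paper instead shows $\zeta\in N_{\dom\vartheta}(0)\cap\ker\nabla\Xi(\ou)^*$ and contradicts the weaker BCQ \eqref{bcq}, which \eqref{duq} implies via \eqref{inc}. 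Your route buys independence from the external theorem; the paper's buys brevity.

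Two points deserve to be spelled out. In $(c)\Rightarrow(a)$, your chosen $w$ satisfies only $\|w\|\le 2\,\dist\big(\Xi(\ou),(\sub \vartheta)^{-1}(\mu)\big)$, which need not place $(y,w)$ inside $\B_\gamma(\oy,0)$ where (c) is available; you need the standard case split---if the distance exceeds, say, $\gamma/4$, the error bound holds trivially for any $\ell\ge 4\ve/\gamma$ because $\|\mu-\omu\|\le\ve$, and otherwise (c) applies---together with the observation that the estimate is vacuous when $(\sub\vartheta)^{-1}(\mu)=\emptyset$. In the upgrade step, the reduction of the negation of (d) to ``$\|\mu_k-\omu\|$ bounded below'' needs a line: given (c), a violating sequence with $\mu_k\in\B_\gamma(\omu)$ infinitely often would force $(y_k,w_k)=(\oy,0)$ and hence $\mu_k=\omu$, so the violators must eventually leave $\B_\gamma(\omu)$. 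Both are routine repairs.
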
 
\begin{proof} The equivalence of (a), (b), and (c) was established in \cite[Theorem~4.1]{ms18} when the convex function $\vartheta$ was the indicator function of a convex set. 
A closer look into the proof of the latter result shows that a similar argument works for $\vartheta$. Clearly, we have the implication (d)$\implies$(c). Assume now that (b) holds. To prove
(d), suppose by contradiction that there exist  sequences  $(y^k,w^k)\to(\oy,0)$ as $k\to\infty$ and   the corresponding multipliers $\mu^k\in M_{\ou,g}(y^k,w^k)$ satisfying the inequality
\begin{equation}\label{pt2}
\Vert \mu^k-\omu\Vert>k(\Vert y^k-\oy\Vert+\Vert w^k\Vert)\;\mbox{ for all }\;k\in\N.
\end{equation}
Note that  the equivalence of (a) and (b) yields $M_{\ou,g}(\oy,0)= \{\omu\}$. Set $t_k:=\Vert \mu^k-\omu\Vert$. We claim that the sequence $\{\mu^k\b$ is bounded. 
Indeed, it follows from \cite[page~138]{mi} that $\vartheta$ is twice epi-differentiable at $\Xi(\ox)$ for $\omu$   and its second subderivative has a representation in the form 
\begin{equation}\label{ssvt}
\d^2\vartheta(\Xi(\ou), \omu)=\dd_{K_{\vartheta}(\Xi(\ou), \omu)}.
\end{equation}
 Employing \cite[Theorem~13.40]{rw}, we arrive at 
 \begin{equation}\label{ssub0}
D (\sub \vartheta) (\Xi(\ou),\omu)(w)=\sub \big(\sm \d^2\vartheta(\Xi(\ou), \omu)\big)(w) \quad \mbox{for all}\;\; w\in K_{\vartheta}(\Xi(\ou), \omu).
\end{equation}
In particular, this tells us that 
\begin{equation}\label{ssub}
D (\sub \vartheta) (\Xi(\ou),\omu)(0)=\sub \big(\sm \d^2\vartheta(\Xi(\ou), \omu)\big)(0)= N_{K_{\vartheta}(\Xi(\ou), \omu)}(0)= K_{\vartheta}(\Xi(\ou), \omu)^*.
\end{equation}
Take $w\in K_{\vartheta}(\Xi(\ou), \omu)$ and observe that $w\in \dom \d\vt(\Xi(\ou))$. Since the inclusion $ \dom \d\vt(\Xi(\ou))\subset T_{\dom \vt}(\Xi(\ou))$ always holds, we arrive at 
$K_{\vartheta}(\Xi(\ou), \omu)\subset T_{\dom \vt}(\Xi(\ou))$. This, coupled with  \eqref{ssub}, leads us to  
\begin{equation}\label{inc}
N_{\dom \vt} (\Xi(\ou))\subset K_{\vartheta}(\Xi(\ou), \omu)^*= D(\partial \vartheta)(\Xi(\ou), \omu)(0),
\end{equation}
 which implies  via \eqref{duq} that the BCQ in \eqref{bcq} holds. To justify the boundedness of $\{\mu^k\b$, assume by contradiction that it is unbounded.
In this case, we can pass to a subsequence if necessary to ensure that the sequence $\{\mu^k/\|\mu^k\|\b$ converges to some $\xi$ that $\|\xi\|=1$. Since $\mu^k\in M_{\ou,g}(y^k,w^k)$, it 
follows from \eqref{mpg} that $ \nabla\Xi(\ou)^*\mu^k=y^k$, which in turn implies that  $\xi\in \ker \nabla\Xi(\ou)^*$. Moreover, we have  $\mu^k\in \sub \vt(\Xi(\ou)+w^k)\subset \partial \vt(0)$, by Proposition~\ref{sulin}(b).
Take $w\in \dom \vt$ and conclude via the definition of the subdifferential of convex functions that
$$\la \mu^k, w\ra \leq \vt(w) - \vt(0) = \vt(w).$$
Dividing both sides by $\|\mu^k\|$ and passing to the limit, we obtain $\la \mu, w\ra\le 0$ for any $w\in \dom \vt$. This clearly tells us that $\xi\in N_{\dom \vt}(0)$, which together with $\xi\in \ker \nabla\Xi(\ou)^*$ contradicts \eqref{bcq}
and hence proves the boundedness of $\{\mu^k\b$. Recall that $M_{\ou,g}(\oy,0)=\{\omu\}$.
Since $\{\mu^k\b$ is bounded, we can assume that it is convergent by passing to a subsequence if necessary. Thus, it follows from $\mu^k\in M_{\ou,g}(y^k,w^k)$ and $M_{\ou,g}(\oy,0)=\{\omu\}$ that $\mu^k\to \omu$.
This immediately  implies that $t_k=\|\mu^k-\omu\|\to 0$. 
Observe then from \eqref{pt2} that $\|y^k-\oy\|= o(t_k)$ and $\|w^k\| = o(t_k)$.
Set $\eta^k: =(\mu^k-\omu)/t_k$ and assume without loss of generality that $\eta^k\to \eta$  with  $\|\eta\|=1$. 
Thus, we have
$\eta^k  \in (\sub \vt(\Xi(\ou)+t_kw^k/t_k) - \omu)/{t_k}$.
Passing to the limit as $k\to\infty$ leads us to $\eta\in D(\sub \vt)(\Xi(\ou),\omu)(0)$, due to the fact that $w^k=o(t_k)$.
Moreover, we have $\nabla\Xi(\ou)^*(\mu^k-\omu)=y^k-\oy$, which together with $y^k-\oy=o(t_k)$ results in $\eta\in \ker \nabla\Xi(\ou)^*$. The latter together contradict 
\eqref{duq}, since $\eta\neq 0$, and thus proves (d). 
\end{proof}

Recall from \cite[Definition~13.65]{rw} that a proper function  $f:\X \to \oR$ is parabolically regular at $\ox\in \dom f$ for  $\ov\in \sub f(\ox)$  if for any $w$ such that  $\d^2 f(\bar x , \ov)(w)<\infty  $, there exist, among the sequences $t_k \searrow 0$ and $w_k\to w$ with 
$\Delta_{t_k}^2 f(\bar x , \ov)(w_k) \to \d^2 f(\bar x , \ov)(w)$, those with the additional property that 
$
\limsup_{k\to \infty}  {\|w_k-w\|}/{t_k}<\infty.
$
Parabolic regularity was recently studied extensively in \cite{mms,ms20} for different classes of sets and functions. Below, we show that ${\cal C}^2$-decomposable  functions
enjoy this property when the dual condition in \eqref{duq} is satisfied.  To this end, we need to recall the concept of parabolically epi-differentiability of functions from \cite[Definition~13.59]{rw}. Given     $f:\X \to \oR$,  $\ox\in \X$ with $f(\ox)$ finite, and $w\in \X$ with $\d f(\ox)(w)$ finite, the {\em parabolic subderivative} of $f$ at $\ox$ for $w$ with respect to $z$ is defined by 
\begin{equation*}\label{lk02}
\d^2 f(\bar x)(w\verl z)= \liminf_{\substack{
   t\searrow 0 \\
  z'\to z
  }} \Delta^2_t f(\bar x)(w\verl z'),
\end{equation*}
where $\Delta^2_t f(\bar x)(w\verl z'):= (f(\ox+tw+\frac{1}{2}t^2 z')-f(\ox)-t\d f(\ox)(w))/{\frac{1}{2}t^2}$. 
The function  $f$ is called {\em parabolically epi-differentiable} at $\ox$ for $w$ if  
$$\dom \d^2 f(\ox)(w \verl \cdot)=\big\{z\in \X|\,  \d^2 f(\ox)(w \verl z)<\infty \big\}\neq \emptyset,$$
 and for every $z \in \X$ and every sequence $t_k\searrow 0$  there exists a  sequences $z^k\to z$ such that $\Delta^2_{t_k} f(\bar x)(w\verl z^k) \to \d^2 f(\bar x)(w\verl z)$.

\begin{Theorem} \label{dess}Assume that $g:\Y\to \oR$ is ${\cal C}^2$-decomposable at $\ou\in \Y$ with representation \eqref{decom} and that $\oy\in \partial g(\ou)$, $\omu\in M_{\ou,g}(\oy,0)$,
where  $M_{\ou,g}$ is defined by \eqref{mpg},  and  the dual condition in \eqref{duq}
is satisfied. 
Then, the following properties hold.
\begin{enumerate}[noitemsep,topsep=2pt]
\item The function $g$ is parabolically regular  at $\ou$ for $\oy$ and its  
second subderivative can be calculated by 
\begin{equation*}\label{ex3.2}
\d^2g(\ou, \oy)(w) = \la \omu, \nabla^2\Xi(\ou)(w, w)\ra +\d^2\vartheta(\Xi(\ou), \omu)(\nabla \Xi(\ou)w)\quad\textrm{ for all} \;\; w \in \Y.
\end{equation*}
\item The mapping $\sub g$ is proto-differentiable at $\ou$ for $\oy$ and its proto-derivative can be calculated by 
$$
D(\sub g)(\ou,\oy)(w)= \nabla^2 \la \omu, \Xi\ra (\ox) w+ \nabla \Xi(\ou)^*D (\sub \vartheta) (\Xi(\ou),\oy)(\nabla \Xi(\ou)w)\quad\textrm{ for all} \;\; w \in \Y.
$$
\end{enumerate}

\end{Theorem}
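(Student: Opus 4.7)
The plan is to derive both assertions from chain rules applied to the composition $g = g(\ou) + \vartheta \circ \Xi$, with the dual condition \eqref{duq} playing the role of constraint qualification. Via the inclusion \eqref{inc}, \eqref{duq} forces the BCQ \eqref{bcq}, so Proposition~\ref{sulin} delivers subdifferential regularity of $g$ at $\ou$ and the chain rule for critical cones that I will rely on; Proposition~\ref{calag}(d) provides the stability of the auxiliary multiplier $\omu$ needed to control extracted multipliers along any approximating sequence.

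For part~(a), I would begin from the definition of the second-order difference quotient. Using $\oy = \nabla\Xi(\ou)^*\omu$, $\Xi(\ou)=0$, and a second-order Taylor expansion of $\Xi$, a direct manipulation yields
\begin{equation*}
\Delta_t^2 g(\ou,\oy)(w') = \frac{2}{t^2}h\bigl(\Xi(\ou+tw')\bigr) + \la\omu,\nabla^2\Xi(\ou)(w',w')\ra + o(1),
\end{equation*}
where $h := \vartheta - \la\omu,\cdot\ra$ is nonnegative sublinear and vanishes exactly on $K_\vartheta(\Xi(\ou),\omu)$. If $\nabla\Xi(\ou)w \notin K_\vartheta(\Xi(\ou),\omu)$, then positive homogeneity of $h$ forces $h(\Xi(\ou+tw'))/t^2 \to \infty$, matching the value $\d^2\vartheta(\Xi(\ou),\omu)(\nabla\Xi(\ou)w) = \infty$ from \eqref{ssvt}. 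If instead $\nabla\Xi(\ou)w \in K_\vartheta(\Xi(\ou),\omu)$, then choosing the constant parabolic sequence $w^k \equiv w$ and exploiting positive homogeneity gives $h(\Xi(\ou+t_k w))/t_k^2 \to 0$, so the limit collapses to $\la\omu,\nabla^2\Xi(\ou)(w,w)\ra$. This realizing sequence satisfies $\|w^k - w\|/t_k = 0$, yielding parabolic regularity in the sense of \cite[Definition~13.65]{rw}.

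For part~(b), the explicit formula in (a) makes $g$ twice epi-differentiable at $\ou$ for $\oy$, since the liminf in the second subderivative is achieved along the sequences constructed above. Convexity of $g$ then activates \cite[Theorem~13.40]{rw}, giving proto-differentiability of $\sub g$ together with
\begin{equation*}
D(\sub g)(\ou,\oy)(w) = \sub\bigl(\tfrac{1}{2}\d^2 g(\ou,\oy)\bigr)(w).
\end{equation*}
Substituting the formula from (a) and applying the convex subdifferential chain rule (valid under the BCQ triggered by \eqref{duq}) produces the smooth gradient $\nabla^2\la\omu,\Xi\ra(\ou)w$ from the quadratic term together with $\nabla\Xi(\ou)^*\sub\bigl(\tfrac{1}{2}\d^2\vartheta(\Xi(\ou),\omu)\bigr)(\nabla\Xi(\ou)w)$ from the composite term. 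A final application of \cite[Theorem~13.40]{rw} to $\vartheta$ at $\Xi(\ou)$ for $\omu$ rewrites the inner subdifferential as $D(\sub\vartheta)(\Xi(\ou),\omu)(\nabla\Xi(\ou)w)$, giving the claimed formula.

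The main obstacle is the matching lower bound in part~(a) along an arbitrary sequence $t_k \searrow 0$, $w^k \to w$, which rules out escape via rapidly varying $w^k$. My strategy is to apply the subdifferential chain rule at $\Xi(\ou + t_k w^k)$, extracting multipliers $\mu^k$ with $\nabla\Xi(\ou + t_k w^k)^*\mu^k$ matching a suitable subgradient, use the outer Lipschitzian property from Proposition~\ref{calag}(d) to conclude $\mu^k \to \omu$, and then pass to the limit by combining the Taylor remainder of $\Xi$ with the explicit second-order structure of $\vartheta$ given by \eqref{ssvt}. This is the one analytically delicate step and the only place where \eqref{duq} is essential beyond its role of producing the BCQ.
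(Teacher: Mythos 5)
Your overall architecture (reduce everything to the outer sublinear function $\vartheta$ through a second-order Taylor expansion of $\Xi$, with \eqref{duq} supplying the constraint qualification and the multiplier stability) is in the spirit of the paper, and your decomposition $\Delta_t^2 g(\ou,\oy)(w') = \tfrac{2}{t^2}h\big(\Xi(\ou+tw')\big) + \la\omu,\nabla^2\Xi(\ou)(w',w')\ra + o(1)$ with $h=\vartheta-\la\omu,\cdot\ra$ is correct, as is the blow-up argument when $\nabla\Xi(\ou)w\notin K_{\vartheta}(\Xi(\ou),\omu)$. The genuine gap is in the critical case $\nabla\Xi(\ou)w\in K_{\vartheta}(\Xi(\ou),\omu)$: the constant sequence $w^k\equiv w$ does \emph{not} in general give $h(\Xi(\ou+t_kw))/t_k^2\to 0$. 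Positive homogeneity only yields $h(\Xi(\ou+t_kw))/t_k^2 = h\big(\nabla\Xi(\ou)w+\tfrac{1}{2}t_k\nabla^2\Xi(\ou)(w,w)+o(t_k)\big)/t_k$, and since $h$ is merely lsc convex and extended-real-valued, this can be identically $+\infty$. Concretely, take $\Y=\R^2$, $\Z=\R$, $\vartheta=\delta_{\R_-}$, $\Xi(u)=u_2+u_1^2$, $\ou=0$, $\omu=1$, $\oy=(0,1)$; then \eqref{duq} holds (indeed $\ker\nabla\Xi(\ou)^*=\{0\}$), and for $w=(1,0)$ one has $\nabla\Xi(\ou)w=0\in K_{\vartheta}(\Xi(\ou),\omu)$ but $\Xi(\ou+t_kw)=t_k^2>0$, so $\Delta^2_{t_k}g(\ou,\oy)(w)=+\infty$ for every $k$, whereas the theorem's formula gives the value $2$, realized only by the curved sequence $w^k=(1,-t_k)$. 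So your realizing sequence fails, and with it your proof of the upper estimate in (a), of parabolic regularity (whose defining sequences you never actually produce), and of the twice epi-differentiability you need in order to invoke \cite[Theorem~13.40]{rw} in part (b).

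What is missing is precisely the \emph{parabolic} step: the realizing sequences must have the form $w^k=w+\tfrac{1}{2}t_kq^k$ with $q^k$ chosen so that $\nabla\Xi(\ou)q^k+\nabla^2\Xi(\ou)(w,w)$ approaches a point of $\dom\d^2\vartheta(\Xi(\ou))(\nabla\Xi(\ou)w\verl\cdot\,)$, and producing such $q^k$ uses \eqref{duq} through metric subregularity of the linearized system --- this is more than the BCQ plus the outer Lipschitz property of $M_{\ou,g}$. The paper does not attempt your direct computation: it verifies that $\vartheta$ is parabolically regular \emph{and} parabolically epi-differentiable at $\Xi(\ou)$ for $\omu$ (both easy consequences of sublinearity, via the identity $\Delta^2_t\vartheta(\Xi(\ou))(w\verl z)=\Delta_{t/2}\vartheta(w)(z)$) and then invokes the chain rule of \cite[Theorem~5.4 and Remark~5.3]{ms20}, which packages the sequence construction; part (b) then follows from \cite[Corollary~3.9]{ms20} combined with the chain rule for $\sub\big(\tfrac{1}{2}\delta_{K_{\vartheta}(\Xi(\ou),\omu)}\circ\nabla\Xi(\ou)\big)$, whose qualification condition is \eqref{duq} itself rewritten as $N_{K_{\vartheta}(\Xi(\ou),\omu)}(0)\cap\ker\nabla\Xi(\ou)^*=\{0\}$ (not the BCQ, as you state). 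Either cite that machinery or supply the parabolic sequence construction explicitly; as written, part (a), and hence part (b), do not go through.
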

\begin{proof} To prove (a), we first show that $\vt$ is  parabolically regular  at $\Xi(\ou)=0$ for $\omu$, where $\vt$ and $\Xi$ are taken from \eqref{decom}. To this end, 
take $w\in \Z$ such that $\d^2\vartheta(\Xi(\ou), \omu)(w)<\infty$, which via \eqref{ssvt} reads as $w\in K_{\vartheta}(\Xi(\ou), \omu)$. By the sublinearity of $\vt$, we can conclude from the latter inclusion that $ \la \omu, w\ra=\d \vt(\Xi(\ou))(w)=\vt(w)$. This yields
\begin{equation*} 
\lim_{t\searrow 0} \Delta^2_t \vartheta(\Xi(\ou), \omu) (w)= \lim_{t\searrow 0} \frac{t(\vt(w) - \la \omu, w\ra)}{\tfrac{1}{2}t^2} =0=\d^2\vartheta(\Xi(\ou), \omu) (w).
\end{equation*}
Take $t_k \searrow 0$ and set  $w^k:= w$ for any $k\in \N$. According to the above relationships, we have $\Delta^2_{t_k} \vartheta(\Xi(\ou), \omu) (w^k)\to \d^2\vartheta(\Xi(\ou), \omu) (w)$, 
which proves that    $\vt$ is  parabolically regular  at $\Xi(\ou)$ for $\omu$. We are going next to show $\vt$ is  parabolically epi-differentiable at $\Xi(\ou)$ for any $w\in K_{\vt}(\Xi(\ou),\omu)$.
To achieve it, take  $w\in K_{\vt}(\Xi(\ou),\omu)$ and conclude for any $z\in \Z$ that 
\begin{equation}\label{pedvt}
\Delta^2_t \vt(\Xi(\ou))(w \verl z) = \frac{\vt(tw+\tfrac{1}{2}t^2z) - t\d \vt(\Xi(\ou))(w)}{\tfrac{1}{2}t^2} = \frac{\vt(w+\tfrac{1}{2}t z)-\vt(w)}{\tfrac{1}{2}t} = \Delta_{t/2} \vt(w)(z).
\end{equation}
Since $\vt$ is proper and convex, it follows from \cite[Example~7.27]{rw} that it is always epi-differentiable in the sense of \cite[Definition~7.23]{rw}. Thus,   for any $z\in \Z$  and $t_k \searrow 0$, 
there exists $z^k\to z$ such that $ \Delta_{t_k} \vt(w)(z^k)\to \d \vt(w)(z)$. Combining this with \eqref{pedvt} tells us that  for any $z\in \Z$  and $t_k \searrow 0$, we can find 
$z^k\to z$ such that $\Delta^2_{t_k} \vt(\Xi(\ou))(w \verl z^k)\to \d^2 \vt(\Xi(\ou))(w \verl z)$.  Moreover, it is not hard to see that $\d \vt(w)(0)<\infty$. This, coupled with \eqref{pedvt},
leads us to $0\in \dom \d^2 \vt(\Xi(\ou))(w \verl \cdot)$, since 
$$
\dom \d^2 \vt(\Xi(\ou))(w \verl \cdot)=\big\{z\in \Z|\;   \d^2 \vt(\Xi(\ou))(w \verl z)<\infty\big\}=\big\{z\in \Z|\;   \d \vt(w)(z)<\infty\big\}.
$$
These demonstrate that $\vt$ is  parabolically epi-differentiable at $\Xi(\ou)$ for $w\in K_{\vt}(\Xi(\ou),\omu)$. 

From the proof of Proposition~\ref{calag}, we know that the dual condition in \eqref{duq} implies the BCQ condition in \eqref{bcq} and uniqueness of the Lagrange multiplier $\omu$ in $M_{\ou,g}(\oy,0)$. Appealing now to \cite[Theorem~5.4 and Remark~5.3]{ms20}, we obtain both assertions in (a). 
To prove (b), 
observe   from \eqref{ssvt} that $\dom \d^2\vartheta(\Xi(\ou), \omu)=K_{\vt}(\Xi(\ou),\omu)$ and from \eqref{ssub} that the dual condition in \eqref{duq}
can be equivalently expressed as 
$$
N_{K_{\vt}(\Xi(\ou),\omu)}(0)\cap \ker \nabla \Xi(\ou)^*=\{0\}. 
$$
Thus, it follows from the chain rule for subdifferentials from \cite[Theorem~10.6]{rw} that, for any $w\in \Y$ such that $\nabla \Xi(\ou)w\in K_{\vt}(\Xi(\ou),\omu)$, we have 
\begin{align*}
\sub_w\big(\sm\d^2\vartheta(\Xi(\ou), \omu)\big)(\nabla \Xi(\ou)w)&= \nabla \Xi(\ou)^* \sub\big(\sm\d^2\vartheta(\Xi(\ou), \omu)\big)(\nabla \Xi(\ou)w)\nonumber\\
&=\nabla \Xi(\ou)^*D (\sub \vartheta) (\Xi(\ou),\omu)(\nabla \Xi(\ou)w),
\end{align*}
where the last equality comes from \eqref{ssub0}. This, coupled with  \cite[Corollary~3.9]{ms20} and the formula for $\d^2g(\ou, \oy)$ from (a), implies that $\sub g$ is proto-differentiable at $\ou$ for $\oy$ and that 
\begin{align*}
D(\sub g)(\ou,\oy)(w)&= \sub\big(\sm \d^2g(\ou, \oy)\big)(w)=  \sm \nabla_w \la \omu, \nabla^2\Xi(\ou)(w, w)\ra +\sub_w\big(\sm\d^2\vartheta(\Xi(\ou), \omu)\big)(\nabla \Xi(\ou)w)\\
&= \nabla^2 \la \omu, \Xi\ra (\ox) w+ \nabla \Xi(\ou)^*D (\sub \vartheta) (\Xi(\ou),\oy)(\nabla \Xi(\ou)w),
\end{align*}
for all $w\in \Y$, which proves (b) and hence ends the proof.
\end{proof}

\end{document}